
\documentclass[submission,copyright,creativecommons]{eptcs}

\usepackage{breakurl}

\makeatletter
\newcommand{\killpic}{%
  \hangindent=0pt
  \let\par=\old@par
}
\makeatother


\usepackage{xypic}
\usepackage{xspace}
\usepackage{amsthm}
\usepackage{picins}
\usepackage{amsmath}
\usepackage{amssymb}
\usepackage{relsize}
\usepackage{paralist}
\usepackage{nicefrac}
\usepackage{multicol}
\usepackage[warn]{textcomp}
\usepackage{mathrsfs}
\usepackage{hyperref}
\usepackage{mathtools}
\usepackage[obeyDraft,
            color=gray,
            backgroundcolor=white,
            textsize=footnotesize,
            bordercolor=gray]{todonotes}

\newcommand{\C}{\mathbb{C}}
\newcommand{\Ad}{\operatorname{\mathrm{Ad}}}
\newcommand{\after}{\mathbin{\circ}}
\newcommand{\keyword}[1]{\textbf{#1}}
\newcommand{\id}{\mathrm{id}}
\newcommand{\NCP}{\mathrm{NCP}}
\newcommand{\ft}{\measuredangle}

\newcommand{\A}{B^a}
\newcommand{\car}{\mathop{\mathrm{car}}}

\DeclarePairedDelimiter\floor{\lfloor}{\rfloor}
\DeclarePairedDelimiter\ceil{\lceil}{\rceil}

\newcounter{main}
\newtheorem{thm}[main]{Theorem}
\newtheorem{lem}[main]{Lemma}
\newtheorem{prop}[main]{Proposition}
\newtheorem{cor}[main]{Corollary}

\newtheorem{prob}[main]{Problem}

\theoremstyle{definition}
\newtheorem{dfn}[main]{Definition}
\newtheorem{exa}[main]{Example}

\newtheorem{rem}[main]{Remark}
\newtheorem{overview}[main]{Overview}

\title{Paschke Dilations}

\author{Abraham Westerbaan
        \institute{Radboud Universiteit Nijmegen}
        \email{bram@westerbaan.name}
        \and Bas Westerbaan
        \institute{Radboud Universiteit Nijmegen}
        \email{bas@westerbaan.name}}

\begin{document}
    
\maketitle

\begin{abstract}
In 1973
Paschke defined a factorization for
completely positive maps
between~C$^*$-algebras.
In this paper
we show that
for normal maps between von Neumann algebras,
this factorization has a universal property,
and coincides with Stinespring's dilation
for normal maps into~$B(\mathscr{H})$.
\end{abstract}
The Stinespring Dilation Theorem\cite{stinespring}  entails
that every normal completely positive linear map (\emph{NCP-map})
$\varphi\colon \mathscr{A} \to B(\mathscr{H})$
is of the form
$\xymatrix{\mathscr{A} \ar[r]|-\pi
& B(\mathscr{K}) \ar[rr]|-{ V^*(\,\cdot\,) V}
&& B(\mathscr{H}) }$
where~$V\colon\mathscr{H}\to \mathscr{K}$ is a bounded operator
and~$\pi$ a normal unital~$*$-homomorphism (\emph{NMIU-map}).
Stinespring's theorem
is fundamental in the study
of quantum information and quantum computing:
it is used to prove entropy inequalities (e.g.~\cite{lindblad}),
bounds on optimal cloners (e.g.~\cite{werner}),
full completeness of quantum programming languages (e.g.~\cite{staton}),
security of quantum key distribution (e.g.~\cite{werner2}),
analyze quantum alternation (e.g.~\cite{prakash}),
to categorify quantum processes (e.g.~\cite{selinger}) \emph{and}
as an axiom to single out
quantum theory among information processing theories.\cite{chiribella}
A fair overview of all uses of Stinespring's theorem and its consequences
would warrant a separate article of its own.

One wonders:
is the Stinespring dilation categorical in some way?
Can the Stinespring dilation theorem be generalized to arbitrary
NCP-maps~$\varphi\colon \mathscr{A} \to \mathscr{B}$?
In this paper we answer both questions in the affirmative.
We use the dilation
introduced by Paschke\cite{bew154}
for arbitrary NCP-maps, and
we show that it coincides
with Stinespring's dilation
(a fact not shown before)
by introducing a universal property for Paschke's dilation,
which Stinespring's dilation also satisfies.

In the second part of this paper,
we will study the class of maps that may appear on the right-hand side
of a Paschke dilation,
to prove the counter-intuitive
fact that both maps in a Paschke dilation
are extreme (among NCP maps with same value on~$1$).

Let us give the universal property
and examples right off the bat;
proofs are further down.
\pichskip{1em}
\parpic[r]{
$\xymatrix@C=2.5em@R=2em{
\mathscr{A} 
\ar[rr]^\varphi
\ar[rd]_{\varrho}
\ar@/_1.5em/[rdd]_{\varrho'}
&
& 
\mathscr{B} 
\\
&
\mathscr{P}
\ar[ru]_{f}
&
\\
&
\mathscr{P}'
\ar@/_1.5em/[ruu]_{f'}
\ar@{-->}[u]_-{\sigma}
}$}
\pichskip{2em}
\begin{thm}
Every NCP-map $\varphi\colon\mathscr{A}\to\mathscr{B}$
has a \keyword{Paschke dilation}.
A Paschke dilation of~$\varphi$
is a pair of maps
$\xymatrix{\mathscr{A}\ar[r]|\varrho & \mathscr{P}
\ar[r]|f & \mathscr{B}}$,
where $\mathscr{P}$ is a von Neumann algebra,
$\varrho$ is an NMIU-map
and~$f$ is an NCP-map with~$\varphi = f \after \varrho$
such that for every
other~$\xymatrix{\mathscr{A}\ar[r]|{\varrho'} & \mathscr{P}'
\ar[r]|{f'} & \mathscr{B}}$,
where $\mathscr{P}'$ is a von Neumann algebra,
$\varrho'$ is an NMIU-map,
and~$f'$ is an NCP-map with~$\varphi=f'\after\varrho'$,
there is a unique NCP-map 
$\sigma\colon \mathscr{P}'\to\mathscr{P}$
such that the diagram on the right commutes.
\end{thm}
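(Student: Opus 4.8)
The plan is to build Paschke's dilation explicitly and then check the universal property by hand. First I would recall the module-theoretic construction. Equip the algebraic tensor product $\mathscr{A}\otimes\mathscr{B}$ with the $\mathscr{B}$-valued form $\langle a_1\otimes b_1, a_2\otimes b_2\rangle = b_1^*\,\varphi(a_1^* a_2)\, b_2$; complete positivity of $\varphi$ makes this form positive, so after quotienting by its kernel and taking the self-dual completion one obtains a self-dual Hilbert $\mathscr{B}$-module $\mathscr{E}$. Set $\mathscr{P} = \A(\mathscr{E})$, the adjointable operators, which is a von Neumann algebra by Paschke's theorem. Let $\varrho\colon\mathscr{A}\to\mathscr{P}$ be the left action $\varrho(a)(a'\otimes b) = (aa')\otimes b$, and let $f\colon\mathscr{P}\to\mathscr{B}$ be compression by the distinguished vector $e = 1\otimes 1$, that is, $f(T) = \langle e, Te\rangle$. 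One checks directly that $\varrho$ is unital, multiplicative and $*$-preserving, that $f$ is completely positive, and that $f(\varrho(a)) = \langle 1\otimes 1, a\otimes 1\rangle = \varphi(a)$. The first place where care is needed is normality: $\varrho$ and $f$ are a priori only bounded, and I would have to appeal to the self-dual structure of $\mathscr{E}$ to upgrade them to normal maps, which is the technical heart of the existence half.

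For the universal property, fix a second factorization $\mathscr{A}\xrightarrow{\varrho'}\mathscr{P}'\xrightarrow{f'}\mathscr{B}$. The key device is an isometry between the associated modules. Build the analogous Paschke module $\mathscr{E}'$ for $f'$, with inner product $\langle p_1\otimes b_1,p_2\otimes b_2\rangle = b_1^*\,f'(p_1^*p_2)\,b_2$, and define $W\colon\mathscr{E}\to\mathscr{E}'$ on generators by $W(a\otimes b) = \varrho'(a)\otimes b$. Using that $\varrho'$ is an NMIU-map and $f'\after\varrho' = \varphi$, a one-line computation shows $W$ preserves inner products, hence extends to an adjointable isometry with $W^*W = \id$ and $We = e'$. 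Writing $\pi'\colon\mathscr{P}'\to\A(\mathscr{E}')$ for the (normal, by the same self-dual argument) left action of $\mathscr{P}'$ on $\mathscr{E}'$, I would define $\sigma = W^*\,\pi'(\,\cdot\,)\,W$. As the composite of a normal $*$-homomorphism with a compression, $\sigma$ is a normal, unital, completely positive map $\mathscr{P}'\to\mathscr{P}$. The two triangles then reduce to short computations: $\sigma(\varrho'(a)) = W^*\pi'(\varrho'(a))W = \varrho(a)$ on generators gives $\sigma\after\varrho' = \varrho$, while $f(\sigma(p')) = \langle We, \pi'(p')We\rangle = \langle e', \pi'(p')e'\rangle = f'(p')$ gives $f\after\sigma = f'$.

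Uniqueness is where the real work lies. Suppose $\sigma'\colon\mathscr{P}'\to\mathscr{P}$ is any NCP-map with $\sigma'\after\varrho' = \varrho$ and $f\after\sigma' = f'$. Since $\varrho$ and $\varrho'$ are unital $*$-homomorphisms, $\sigma'$ is unital and each $\varrho'(a)$ satisfies $\sigma'(\varrho'(a)^*\varrho'(a)) = \varrho(a)^*\varrho(a) = \sigma'(\varrho'(a))^*\sigma'(\varrho'(a))$; that is, $\varrho'(\mathscr{A})$ lies in the multiplicative domain of $\sigma'$. Consequently $\sigma'$ is a bimodule map over $\varrho'(\mathscr{A})$, so $\sigma'(\varrho'(a)^*\,p'\,\varrho'(\tilde a)) = \varrho(a)^*\,\sigma'(p')\,\varrho(\tilde a)$. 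Pairing against the vectors $\varrho(a)e\cdot b$ that generate $\mathscr{E}$ and using $f\after\sigma' = f'$ yields
\[
  \langle \varrho(a)e\cdot b,\ \sigma'(p')\,\varrho(\tilde a)e\cdot \tilde b\rangle
  \;=\; b^*\,f'(\varrho'(a)^*\,p'\,\varrho'(\tilde a))\,\tilde b .
\]
The right-hand side is independent of $\sigma'$, so every admissible $\sigma'$ has the same matrix elements against a submodule dense in $\mathscr{E}$; since an adjointable operator on a self-dual Hilbert module is determined by such matrix elements, $\sigma'(p') = \sigma(p')$ for all $p'$. The main obstacle, apart from the normality bookkeeping flagged in the first paragraph, is precisely this last step: one must know both that $\varrho'(\mathscr{A})$ sits in the multiplicative domain (so the Kadison--Schwarz equality case applies and forces the bimodule identity) and that matrix elements against a dense submodule pin down an operator on the self-dual completion. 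Everything else is routine verification.
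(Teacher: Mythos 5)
Your proposal matches the paper's proof essentially step for step: the same self-dual module construction with $\mathscr{P} = \A(\mathscr{A}\otimes_\varphi\mathscr{B})$, $\varrho$ the left action and $f = \left<1\otimes 1,(\,\cdot\,)(1\otimes 1)\right>$; the same existence argument via the inner-product-preserving map $W(a\otimes b)=\varrho'(a)\otimes b$ into the Paschke module of $f'$ with $\sigma = W^*\pi'(\,\cdot\,)W$ (the trick the paper credits to Skeide); and the same uniqueness argument combining the multiplicative-domain identity (the paper's Lemma~\ref{lem:choi}) with the fact that an operator on the self-dual completion is pinned down by its inner products against $X_0$. One small correction: $X_0$ is generally \emph{not} norm-dense in its self-dual completion $X_0'$, so the final step should be justified not by density but by the positivity criterion of Overview~\ref{ov:hmod}\eqref{hmod:ax} --- $t\geq 0$ iff $\left<x,tx\right>\geq 0$ for all $x\in X_0$, applied to the self-adjoint components of $\sigma_1(c)-\sigma_2(c)$ --- exactly as the paper does.
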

\killpic
\begin{exa}
A \emph{minimal} Stinespring dilation
$\xymatrix{
\mathscr{A}
\ar[r]|-\pi
&
B(\mathscr{K})
\ar[rr]|-{ V(\,\cdot\,) V^*}
&&
B(\mathscr{H})
}$ of an NCP-map is a Paschke dilation,
see Theorem~\ref{thm:stinespring-paschke}.
\end{exa}
\begin{exa}
As a special case of the previous example,
we see the GNS construction
for a normal state~$\varphi$
on a von Neumann algebra~$\mathscr{A}$,
gives a Paschke dilation
$\xymatrix{
\mathscr{A}
\ar[r]|-\pi
&
B(\mathscr{H})
\ar[rr]|-{\left<\xi,(\,\cdot\,)\xi\right>}
&&
\mathbb{C}
}$ of~$\varphi$.
In particular,
the Paschke dilation of $(\lambda,\mu)=\frac{1}{2}(\lambda+\mu),
\ \mathbb{C}^2\to\mathbb{C}$
is 
\begin{equation*}
\xymatrix@C=10em{
\mathbb{C}^2\ar[r]^-{(\lambda,\mu)\mapsto
\left(\begin{smallmatrix}\lambda & 0 \\ 0 & \mu \end{smallmatrix}
\right)}  
&  M_2 \ar[r]^-{%
\left(\begin{smallmatrix}a & b \\ c & d\end{smallmatrix}%
\right)\mapsto \frac{1}{2}(a+b+c+d)} & \mathbb{C} }.
\end{equation*}
This gives
a universal property to
the von Neumann algebra~$M_2$ of $2\times 2$ complex matrices,
(which is a  model of the qubit.)
\end{exa}
The following examples can be proven
using only the universal property of a Paschke dilation.
\begin{exa}
The Paschke dilation of an NMIU-map~$\varrho\colon \mathscr{A}\to\mathscr{B}$
is
$\xymatrix{
\mathscr{A}
\ar[r]|-\varrho
&
\mathscr{B}
\ar[r]|-{\mathrm{id}}
&
\mathscr{B}
}$.
\end{exa}
\begin{exa}\label{exa:pure}
If
$\xymatrix{\mathscr{A}
\ar[r]|-{\varrho} & \mathscr{P}
\ar[r]|-{f} & \mathscr{B}}$ is a Paschke dilation,
then
$\xymatrix{\mathscr{P}
\ar[r]|-{\id} & \mathscr{P}
\ar[r]|-{f} & \mathscr{B}}$ is a Paschke dilation of~$f$.
\end{exa}

\begin{exa}
Let~$\varphi\colon \mathscr{A} \to \mathscr{B}_1 \oplus \mathscr{B}_2$
be any NCP-map.
$\xymatrix@C+1pc{\mathscr{A} \ar[r]|-{\left<\varrho_1,\varrho_2\right>} &
\mathscr{P}_1 \oplus \mathscr{P}_2
\ar[r]|-{f_1 \oplus f_2} & \mathscr{B}_1 \oplus \mathscr{B}_2}$
is a Paschke dilation of~$\varphi$
if
$\xymatrix{\mathscr{A} \ar[r]|-{\varrho_i} &
\mathscr{P}_i
\ar[r]|-{f_i} & \mathscr{B}_i}$
is a Paschke dilation of~$\pi_i \after \varphi$ for~$i =1,2$.
\end{exa}
\begin{exa}
Let~$\varphi\colon \mathscr{A} \to \mathscr{B}$ be any~NCP-map
with Paschke dilation
$\xymatrix{\mathscr{A} \ar[r]|-{\varrho} &
\mathscr{P}
\ar[r]|-{f} & \mathscr{B}}$ and~$\lambda >0$.
Then
$\xymatrix{\mathscr{A} \ar[r]|-{\varrho} &
\mathscr{P}
\ar[r]|-{\lambda f} & \mathscr{B}}$
is a Paschke dilation of~$\lambda \varphi$.
\end{exa}
\begin{exa}
Let
$\xymatrix{\mathscr{A} \ar[r]|-{\varrho} &
\mathscr{P}
\ar[r]|-{f} & \mathscr{B}}$ be a Paschke dilation for a
map~$\varphi\colon\mathscr{A} \to \mathscr{B}$.
If~$\vartheta\colon \mathscr{P} \to \mathscr{P}'$ is any isomorhism,
then~$\xymatrix@C+.9pc{\mathscr{A} \ar[r]|-{\vartheta \after \varrho} &
\mathscr{P}'
\ar[r]|-{f\after \vartheta^{-1}} & \mathscr{B}}$ is also a Paschke dilation
of~$\varphi$.
\end{exa}
There is a converse to the last example:
\begin{lem}\label{lem:dilationsareisomorphic}
If
$\xymatrix{\mathscr{A}
\ar[r]|-{\varrho_i} & \mathscr{P}_i
\ar[r]|-{f_i} & \mathscr{B}}$ ($i=1,2$)
are Paschke dilations for the same
map~$\varphi\colon \mathscr{A} \to \mathscr{B}$,
then there is a unique (NMIU)
isomorphism~$\vartheta\colon \mathscr{P}_1 \to \mathscr{P}_2$
such that~$\vartheta \after \varrho_1 = \varrho_2$
and~$f_2 \after \vartheta = f_1$.
\end{lem}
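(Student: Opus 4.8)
The plan is to pull $\vartheta$ out of the universal property and then argue that it is automatically an NMIU-isomorphism. First I would apply the universal property of the dilation $(\varrho_1,f_1)$ to the competing factorisation $(\varrho_2,f_2)$, obtaining a unique NCP-map $\sigma\colon\mathscr{P}_2\to\mathscr{P}_1$ with $\sigma\after\varrho_2=\varrho_1$ and $f_1\after\sigma=f_2$. Symmetrically, the universal property of $(\varrho_2,f_2)$ applied to $(\varrho_1,f_1)$ yields a unique NCP-map $\vartheta\colon\mathscr{P}_1\to\mathscr{P}_2$ with $\vartheta\after\varrho_1=\varrho_2$ and $f_2\after\vartheta=f_1$; this $\vartheta$ is precisely the map the lemma asks for. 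The composite $\sigma\after\vartheta\colon\mathscr{P}_1\to\mathscr{P}_1$ is then an NCP-map satisfying $(\sigma\after\vartheta)\after\varrho_1=\varrho_1$ and $f_1\after(\sigma\after\vartheta)=f_1$, and so does $\id_{\mathscr{P}_1}$; the uniqueness clause of the universal property (taking the first dilation as its own competitor) forces $\sigma\after\vartheta=\id_{\mathscr{P}_1}$, and symmetrically $\vartheta\after\sigma=\id_{\mathscr{P}_2}$. Hence $\vartheta$ is a bijection whose inverse $\sigma=\vartheta^{-1}$ is again an NCP-map.

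It remains to promote this NCP-isomorphism to an NMIU-map. Unitality is immediate: applying $\vartheta\after\varrho_1=\varrho_2$ to $1$ and using that $\varrho_1,\varrho_2$ are unital gives $\vartheta(1)=1$. For multiplicativity I would exploit that $\vartheta$ and $\vartheta^{-1}$ are both unital and completely positive, hence in particular $2$-positive, so both obey the Kadison--Schwarz inequality $\Phi(a)^*\Phi(a)\le\Phi(a^*a)$. Taking $b=\vartheta(a)$ in the inequality for $\vartheta^{-1}$ gives $a^*a\le\vartheta^{-1}\bigl(\vartheta(a)^*\vartheta(a)\bigr)$, and applying the order-preserving map $\vartheta$ yields the reverse inequality $\vartheta(a^*a)\le\vartheta(a)^*\vartheta(a)$; together with Kadison--Schwarz for $\vartheta$ this gives $\vartheta(a^*a)=\vartheta(a)^*\vartheta(a)$ for every $a\in\mathscr{P}_1$. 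A unital $2$-positive map with equality in Kadison--Schwarz everywhere has full multiplicative domain, hence is multiplicative; since positive maps preserve the involution, $\vartheta$ is then a unital normal $*$-homomorphism, i.e.\ an NMIU-isomorphism.

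Uniqueness is the final, easy step: any $\vartheta'$ as in the statement is in particular an NCP-map with $\vartheta'\after\varrho_1=\varrho_2$ and $f_2\after\vartheta'=f_1$, so it coincides with $\vartheta$ by the uniqueness already built into the universal property. I expect the only genuinely non-formal part to be the passage from ``NCP-isomorphism'' to ``NMIU'': producing $\sigma$, $\vartheta$ and their mutual inverseness is pure terminal-object bookkeeping, whereas upgrading to a $*$-isomorphism rests on the two-sided Kadison--Schwarz argument above, which crucially uses that \emph{both} $\vartheta$ and its inverse are completely positive and unital.
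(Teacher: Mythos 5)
Your proof is correct, and its skeleton is the same as the paper's: obtain the two mediating NCP-maps from the universal property, use the uniqueness clause against $\id_{\mathscr{P}_i}$ to see they are mutually inverse, observe $\vartheta(1)=\vartheta(\varrho_1(1))=\varrho_2(1)=1$, and note that uniqueness of $\vartheta$ is just the uniqueness already in the universal property. The one place you genuinely diverge is the upgrade from unital NCP-isomorphism to NMIU-isomorphism: the paper simply cites~\cite[Corollary~47]{ww16} for this, whereas you prove it inline via the two-sided Kadison--Schwarz argument --- using complete (hence $2$-)positivity of both $\vartheta$ and $\vartheta^{-1}$ to squeeze $\vartheta(a^*a)=\vartheta(a)^*\vartheta(a)$ for all $a$, and then invoking the multiplicative-domain theorem to get multiplicativity, with $*$-preservation and normality coming for free from positivity and the NCP hypothesis. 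This inlined step is sound, and it is pleasant that it rests on exactly the same result of Choi (Theorem~3.1 of~\cite{aaw64}, restated as Equation~\eqref{eq:choi} in Lemma~\ref{lem:choi}) that the paper already uses elsewhere, so your version makes the lemma self-contained at no real cost; the paper's citation buys brevity and defers to a statement proved once in~\cite{ww16} rather than reproving it here.
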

\begin{proof}
There are unique mediating maps~$\sigma_1\colon \mathscr{P}_1 \to \mathscr{P}_2$
and~$\sigma_2 \colon \mathscr{P}_2 \to \mathscr{P}_1$.
It is easy to see~$\sigma_1 \after \sigma_2$
satisfies the same property as the unique mediating
map~$\id\colon \mathscr{P}_1 \to \mathscr{P}_1$ and
so~$\sigma_1 \after \sigma_2 = \id$.
Similarly~$\sigma_2 \after \sigma_1 = \id$.
Define~$\vartheta = \sigma_1$.
We just saw~$\vartheta$ is an NCP-isomorphism.
Note~$\vartheta(1) = \vartheta(\varrho_1(1)) = \varrho_2(1) = 1$
and so~$\vartheta$ is unital.
But then by~\cite[Corollary 47]{ww16}
$\vartheta$ is an NMIU isomorphism.
\end{proof}

\section{Two universal properties for Stinespring's dilation}
Let~$\varphi\colon \mathscr{A} \to B(\mathscr{H})$
be a NCP-map where~$\mathscr{A}$
is a von Neumann algebra
and~$\mathscr{H}$ is a Hilbert space.
In this section,
we prove that any minimal normal Stinespring dilation
of~$\varphi$
gives a Paschke dilation of~$\varphi$.
Let us first recall the relevant definitions.

\begin{dfn}
A \keyword{normal Stinespring dilation} of~$\varphi$,
is a triple~$(\mathscr{K}, \pi, V)$,
where~$\mathscr{K}$ is a Hilbert space,
$\pi\colon \mathscr{A} \to B(\mathscr{K})$ is an NMIU-map,
and~$V\colon \mathscr{H} \to \mathscr{K}$ a bounded operator
such that~$\varphi = \Ad_V \after \pi$,
where $\Ad_v\colon B(\mathscr{K})\to B(\mathscr{H})$
is the NCP-map given by~$\Ad_V (A) = V^* A V$
for all~$A\in B(\mathscr{K})$.\footnote{Be warned:
many authors
prefer to define~$\Ad_V$ by~$\Ad_V(A)= VAV^*$ instead.}
If the linear span of
$\{\,\pi (a) V x\colon \,a\in\mathscr{A}\,x\in\mathscr{H}\,\}$
is dense in~$\mathscr{K}$,
then  $(\mathscr{K},\pi,V)$
is called \keyword{minimal}.
\end{dfn}

It is a well-known fact that
all minimal normal Stinespring dilations of~$\varphi$
are unitarily equivalent (see e.g.~\cite[Prop.~4.2]{paulsen}).
We will adapt its proof
to show that a minimal Stinespring dilation
admits a universal property (Prop.~\ref{prop:stinespring-spatial}),
which we will need later on.
The adaptation is mostly straight-forward,
except for the following lemma.
\parpic[r]{
$\xymatrix{\mathscr{A} \ar[r]^\pi \ar[rd]_{\pi'}&
        \mathscr{B} \\
        & \mathscr{C} \ar[u]_\sigma}$}
\begin{lem}\label{lem:choi}
    Let~$\pi \colon \mathscr{A} \to \mathscr{B}$,
    $\pi' \colon \mathscr{A} \to \mathscr{C}$
        be NMIU-maps between von Neumann algebras,
	and let  $\sigma\colon \mathscr{C} \to \mathscr{B}$
	be an NCP-map
        such that~$\sigma \after \pi' = \pi$. \\
    Then $\sigma(\,\pi'(a_1) \,c\, \pi'(a_2)\,)
            = \pi(a_1) \,\sigma(c)\, \pi(a_2)$
        for any~$a_1,a_2 \in \mathscr{A}$ and~$c \in \mathscr{C}$.
\end{lem}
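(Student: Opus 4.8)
The plan is to read the claim as a statement about the \emph{multiplicative domain} of $\sigma$. Writing $p=\pi'(a)$, I would show that every such $p$ behaves multiplicatively under $\sigma$, meaning $\sigma(p\,c)=\sigma(p)\,\sigma(c)$ and $\sigma(c\,p)=\sigma(c)\,\sigma(p)$ for all $c\in\mathscr{C}$; the stated identity then follows by bracketing $\pi'(a_1)\,c\,\pi'(a_2)$ and peeling off the two outer factors. The first thing to record is that $\sigma$ is automatically \emph{unital}: since $\pi'$ is unital, $\sigma(1)=\sigma(\pi'(1))=\pi(1)=1$. This is exactly what is needed to invoke the Kadison--Schwarz inequality $\sigma(c)^*\sigma(c)\le\sigma(c^*c)$ and, since $\sigma$ is completely positive, the same inequality for every matrix amplification of $\sigma$.

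Next I would check that equality holds in Schwarz precisely for the elements coming from~$\mathscr{A}$. Using that $\pi$ and $\pi'$ are $*$-homomorphisms and that $\sigma\after\pi'=\pi$,
\[
    \sigma(\pi'(a)^*\pi'(a)) \;=\; \sigma(\pi'(a^*a)) \;=\; \pi(a^*a)
        \;=\; \pi(a)^*\pi(a) \;=\; \sigma(\pi'(a))^*\sigma(\pi'(a)),
\]
and symmetrically $\sigma(\pi'(a)\pi'(a)^*)=\sigma(\pi'(a))\sigma(\pi'(a))^*$. Thus $\pi'(a)$ saturates both Schwarz inequalities, i.e.\ lies in the multiplicative domain of~$\sigma$.

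The technical core is then to turn these equalities into genuine multiplicativity. The trick I would use is to feed the $2\times2$ amplification $\sigma_2$ of $\sigma$ (again unital and completely positive) the operator $W=\left(\begin{smallmatrix}\pi'(a)^* & c\\ 0 & 0\end{smallmatrix}\right)\in M_2(\mathscr{C})$ and apply Kadison--Schwarz there. The resulting positive matrix $\sigma_2(W^*W)-\sigma_2(W)^*\sigma_2(W)\ge 0$ has $(1,1)$-entry $\sigma(\pi'(a)\pi'(a)^*)-\sigma(\pi'(a))\sigma(\pi'(a))^*$, which vanishes by the previous step; a positive $2\times2$ operator matrix with a zero diagonal entry has the corresponding off-diagonal entries zero, so its $(1,2)$-entry $\sigma(\pi'(a)\,c)-\sigma(\pi'(a))\,\sigma(c)$ also vanishes. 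This gives the left identity $\sigma(\pi'(a)\,c)=\pi(a)\,\sigma(c)$. The right identity $\sigma(c\,\pi'(a))=\sigma(c)\,\pi(a)$ I would then get for free: apply the left identity to $a^*$ (noting $\pi'(a)^*=\pi'(a^*)$), take adjoints, and substitute $c\mapsto c^*$.

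Finally, combining the two one-sided identities,
\[
    \sigma(\pi'(a_1)\,c\,\pi'(a_2))
        \;=\; \pi(a_1)\,\sigma(c\,\pi'(a_2))
        \;=\; \pi(a_1)\,\sigma(c)\,\pi(a_2),
\]
as desired. I expect the only genuinely delicate point to be setting up the correct positive matrix and the passage from a vanishing diagonal entry to vanishing off-diagonal entries (a standard C$^*$-algebraic fact about positive $2\times2$ matrices); everything else is bookkeeping with the $*$-homomorphism property and the identity $\sigma\after\pi'=\pi$.
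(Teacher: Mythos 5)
Your proof is correct and takes essentially the paper's approach: you verify equality in the Schwarz inequality at the elements $\pi'(a)$, conclude they lie in the multiplicative domain of $\sigma$, and combine the two one-sided identities $\sigma(\pi'(a)c)=\pi(a)\sigma(c)$ and $\sigma(c\,\pi'(a))=\sigma(c)\pi(a)$ (the latter by adjoints) exactly as the paper does. The only difference is that where the paper invokes Choi's multiplicative-domain theorem as a citation (Theorem~3.1 of its reference) for the implication that Schwarz equality at $d$ yields $\sigma(cd)=\sigma(c)\sigma(d)$, you reprove that implication inline via the $2\times 2$ amplification $\sigma_2$ and the fact that a positive $2\times2$ operator matrix with a vanishing diagonal entry has vanishing corresponding off-diagonal entries --- which is Choi's original argument, so the mathematical content coincides.
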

\killpic
\begin{proof}
By Theorem~3.1 of~\cite{aaw64},
we know that, for all~$c,d\in\mathscr{C}$,
\begin{equation}
\label{eq:choi}
\sigma(d^*d)\,=\,\sigma(d)^*\sigma(d)\quad\implies\quad
\sigma(cd)\,=\,\sigma(c)\sigma(d).
\end{equation}
Let~$a \in \mathscr{A}$.
We have~$\sigma(\pi'(a)^* \pi'(a))
    = \sigma(\pi'(a^* a))
    = \pi(a^* a)
    = \pi(a)^* \pi(a)
    = \sigma(\pi'(a))^* \sigma(\pi'(a))$.
By~\eqref{eq:choi},
we have~$\sigma(c\pi'(a)) = \sigma(c)\sigma(\pi'(a)) \equiv \sigma(c)\pi(a)$
for all~$c \in \mathscr{C}$.
Then also $\sigma(\pi'(a) c) = \pi(a)\sigma(c)$
for all~$c\in \mathscr{C}$ 
	(by taking adjoints).
Thus $\sigma(\,\pi'(a_1)\,c\,\pi'(a_2)\,)
= \pi(a_1)\,\sigma(c\pi'(a_2))
= \pi(a_1)\,\sigma(c)\,\pi(a_2)$
for all~$a_1,a_2\in \mathscr{A}$
and~$c\in\mathscr{C}$.
\end{proof}
\begin{lem}
\label{lem:adv}
Let~$\mathscr{K}$
be a Hilbert space.
If~$\Ad_S = \Ad_T$
for~$S,T\in B(\mathscr{K})$,
then $S=\lambda T$
for some $\lambda\in\mathbb{C}$.
\end{lem}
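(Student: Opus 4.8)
The plan is to evaluate the hypothesis $\Ad_S=\Ad_T$, which unfolds to $S^*AS=T^*AT$ for every $A\in B(\mathscr{K})$, on the rank-one operators, whose behaviour under $\Ad$ is completely explicit. First I would dispose of the degenerate case: if $S=0$ then $\Ad_S=0$, so $T^*T=\Ad_T(1)=0$ and hence $T=0$, giving $S=1\cdot T$. So from now on I assume $S,T\neq 0$.

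The computation to record first is the action of $\Ad$ on rank-one operators. For $\xi,\eta\in\mathscr{K}$ let $\xi\eta^*$ denote the operator $\zeta\mapsto\langle\eta,\zeta\rangle\,\xi$; a one-line calculation gives $\Ad_B(\xi\eta^*)=(B^*\xi)(B^*\eta)^*$. Thus the hypothesis becomes $(S^*\xi)(S^*\eta)^*=(T^*\xi)(T^*\eta)^*$ for all $\xi,\eta\in\mathscr{K}$. Setting $\eta=\xi$ equates two positive operators of rank at most one, with ranges $\mathbb{C}\,S^*\xi$ and $\mathbb{C}\,T^*\xi$; comparing ranges gives $T^*\xi=c_\xi\,S^*\xi$ for a scalar $c_\xi$ whenever $S^*\xi\neq 0$, and comparing the two operators (which then differ by the factor $|c_\xi|^2$) forces $|c_\xi|=1$. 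When $S^*\xi=0$ the same diagonal identity yields $T^*\xi=0$, so $T^*\xi=c_\xi\,S^*\xi$ can be arranged for every $\xi$.

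The crux --- and the step I expect to be the real content --- is to show that the phase $c_\xi$ does not depend on $\xi$; the diagonal relation alone is too weak, as it only pins down $\|S^*\xi\|=\|T^*\xi\|$ and leaves a possibly vector-dependent phase. To rule that out I would return to the off-diagonal equation: for $\xi,\eta$ with $S^*\xi,S^*\eta\neq 0$, substituting $T^*\xi=c_\xi S^*\xi$ and $T^*\eta=c_\eta S^*\eta$ and cancelling the common nonzero rank-one factor gives $c_\xi\,\overline{c_\eta}=1$, whence $c_\xi=c_\eta$ since both have modulus one. Fixing one $\eta_0$ with $S^*\eta_0\neq 0$ (available because $S\neq 0$) then shows $c_\xi$ equals a single constant $c$, so $T^*=c\,S^*$. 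Taking adjoints gives $T=\overline{c}\,S$, that is $S=\lambda T$ with $\lambda=\overline{c}^{\,-1}$ (which, incidentally, turns out to have modulus one).
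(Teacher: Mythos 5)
Your proof is correct, and it diverges from the paper's exactly at the step you yourself flag as the crux. Both arguments open the same way: the paper applies $\Ad_S=\Ad_T$ to the rank-one projection $P$ onto $\{\lambda x\colon\lambda\in\mathbb{C}\}$ and compares $\mathrm{Ran}(S^*PS)=\mathrm{Ran}(T^*PT)$, which is precisely your diagonal case $\eta=\xi$ and yields the same ray-wise relation $S^*x=\alpha_x T^*x$ with $\alpha_x\neq 0$. The globalization is then handled differently: the paper never touches the off-diagonal operators, but instead cites Lemma~9 of \cite{ww16}, a general linear-algebra fact that two linear maps which are pointwise proportional (with nonzero scalars) are globally proportional. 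You instead feed the off-diagonal rank-ones $\xi\eta^*$ back into the hypothesis, cancel the common nonzero rank-one factor, and obtain $c_\xi\overline{c_\eta}=1$ directly, with $|c_\xi|=1$ already forced by the diagonal identity. What your route buys: the proof is entirely self-contained, and it extracts for free the sharper conclusion $|\lambda|=1$, which the paper's statement does not record (though it is of course forced, since $\Ad_{\lambda T}=|\lambda|^2\Ad_T$ and $\Ad_T\neq 0$ when $T\neq 0$). What the paper's route buys: it needs only the diagonal data, so the argument is shorter on the page and reuses a lemma the authors have available anyway; your off-diagonal cancellation is, in effect, an inlined proof of that lemma specialized to adjoints. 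Your treatment of the degenerate cases ($S=0$ via $\Ad_T(1)=T^*T$, and $T^*\xi=0$ whenever $S^*\xi=0$) is careful and closes the gaps the pointwise-phase argument would otherwise leave.
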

\begin{proof}
Let~$x\in \mathscr{K}$ be given,
and let~$P$ be the projection 
onto~$\{\lambda x\colon \lambda\in\mathbb{C}\}$.
Then
\begin{equation*}
\{\,\lambda S^*x\colon\,\lambda\in\mathbb{C}\,\}
\ =\ 
\mathrm{Ran}(\ S^*PS \ )
\ = \ 
\mathrm{Ran}(\ T^*PT \ )
\ = \ 
\{\,\lambda T^*x\colon\,\lambda\in\mathbb{C}\,\}.
\end{equation*}
It follows that~$S^*x = \alpha T^* x$
for some~$\alpha\in \mathbb{C}$
	with~$\alpha\neq 0$.
While $\alpha$ might depend on~$x$,
there is
$\alpha_0\in\mathbb{C}$
with~$\alpha_0\neq 0$ and~$S^* = \alpha_0 T^*$
by Lemma~9 of~\cite{ww16}.
Then~$S=\alpha_0^* T$.
\end{proof}

\begin{prop}\label{prop:stinespring-spatial}
    Let~$(\mathscr{K},\pi,V)$ and
        $(\mathscr{K}', \pi', V')$
	be normal Stinespring dilations of~$\varphi$.
	If~$(\mathscr{K},\pi,V)$
	is minimal,
	then
        there is a unique isometry~$S \colon \mathscr{K} \to \mathscr{K}'$
        such that~$S V =  V'$ and~$\pi = \Ad_S \after \pi'$.
\end{prop}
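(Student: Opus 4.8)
The plan is to construct $S$ explicitly on a dense subspace and extend it by continuity, adapting the classical argument for uniqueness of minimal Stinespring dilations (cf.\ \cite[Prop.~4.2]{paulsen}); the twist is that only $(\mathscr{K},\pi,V)$ is assumed minimal, so we must be content with an isometry rather than a unitary. Write $D$ for the linear span of $\{\pi(a)Vx : a\in\mathscr{A},\ x\in\mathscr{H}\}$, which is dense in $\mathscr{K}$ by minimality. I would define $S_0\colon D\to\mathscr{K}'$ by $S_0\bigl(\sum_i \pi(a_i)Vx_i\bigr)=\sum_i \pi'(a_i)V'x_i$. To see this is simultaneously well defined and isometric, I would compute the inner product $\langle \sum_i\pi(a_i)Vx_i,\sum_j\pi(a_j)Vx_j\rangle$ together with the corresponding inner product of the images in $\mathscr{K}'$, and observe that each reduces to $\sum_{i,j}\langle x_i,\varphi(a_i^*a_j)x_j\rangle$: indeed $V^*\pi(a_i^*a_j)V=\varphi(a_i^*a_j)=(V')^*\pi'(a_i^*a_j)V'$, since both $\varphi=\Ad_V\after\pi$ and $\varphi=\Ad_{V'}\after\pi'$ are Stinespring factorisations of $\varphi$. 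Equality of these scalars shows at once that $S_0$ is well defined (a vanishing source vector has vanishing image) and isometric, so it extends uniquely to an isometry $S\colon\mathscr{K}\to\mathscr{K}'$.

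Next I would read off the two required identities. Putting $a=1$ and using $\pi(1)=1$ gives $S(Vx)=V'x$, i.e.\ $SV=V'$. For $\pi=\Ad_S\after\pi'$ I would first establish the intertwining relation $S\pi(a)=\pi'(a)S$: on the generators, $S\pi(a)\pi(b)Vx=S\pi(ab)Vx=\pi'(ab)V'x=\pi'(a)S(\pi(b)Vx)$, and this extends to all of $\mathscr{K}$ by continuity. As $S$ is an isometry we have $S^*S=\id$, whence $\Ad_S(\pi'(a))=S^*\pi'(a)S=S^*S\pi(a)=\pi(a)$, as required.

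For uniqueness, suppose $S$ and $T$ are both isometries satisfying $SV=V'=TV$ and $\Ad_S\after\pi'=\pi=\Ad_T\after\pi'$. The key point is to recover the intertwining relation from the weaker adjoint hypothesis $S^*\pi'(a)S=\pi(a)$; this is exactly the situation of Lemma~\ref{lem:choi}, applied to the NMIU-maps $\pi,\pi'$ and the NCP-map $\Ad_S$ with $\Ad_S\after\pi'=\pi$, which together with $S^*S=\id$ forces $S\pi(a)=\pi'(a)S$ (equivalently, one may expand $\|(S\pi(a)-\pi'(a)S)x\|^2$ and check it vanishes, the equality case of Kadison--Schwarz). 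The same holds for $T$. Then $(S-T)\pi(a)Vx=\pi'(a)SVx-\pi'(a)TVx=\pi'(a)V'x-\pi'(a)V'x=0$, so $S-T$ vanishes on $D$ and hence $S=T$ by density.

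The step I expect to be the main obstacle is precisely this promotion of the adjoint relation $\Ad_S\after\pi'=\pi$ to the genuine intertwining $S\pi(a)=\pi'(a)S$, which is why Lemma~\ref{lem:choi} is singled out as the one non-routine ingredient; the inner-product computation underlying the construction of $S$, while it carries the technical weight, is otherwise a direct consequence of the two Stinespring factorisations of $\varphi$.
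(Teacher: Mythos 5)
Your proof is correct, and its existence half is essentially the paper's own argument: the same construction of $S$ on the dense span $\pi(\mathscr{A})V\mathscr{H}$, with your polarized inner-product computation in place of the paper's norm computation, and the same verification of $SV=V'$, the intertwining on generators, and $\Ad_S\after\pi'=\pi$ via $S^*S=1$. Your uniqueness half, however, takes a genuinely different route. The paper shows that the hypotheses determine the matrix coefficients $\bigl<\Ad_{S_k}(c)\,u,v\bigr>$ on the dense span (via Lemma~\ref{lem:choi}), concludes $\Ad_{S_1}=\Ad_{S_2}$, and then needs Lemma~\ref{lem:adv} to get $S_2=\lambda S_1$, pinning down $\lambda=1$ from $S_kVx=V'x$ at some $x$ with $Vx\neq 0$ --- which forces a separate treatment of the pathological case $V=0$. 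You instead upgrade the adjoint relation $\Ad_S\after\pi'=\pi$ to the genuine intertwining $S\pi(a)=\pi'(a)S$ and conclude that any two candidates agree on the dense span outright; this avoids Lemma~\ref{lem:adv} entirely (and its reliance on an external result) and handles $V=0$ uniformly. One caution: citing Lemma~\ref{lem:choi} as directly ``forcing'' the intertwining is slightly loose --- with $\sigma=\Ad_S$ it yields $S^*c\,\pi'(a)S=S^*cS\,\pi(a)$ for all $c$, an adjoint-side relation from which the operator identity still needs one extra step --- but your parenthetical expansion of $\|(S\pi(a)-\pi'(a)S)x\|^2$, the equality case of Kadison--Schwarz using the hypothesis at both $a$ and $a^*a$ together with $S^*S=1$, is a complete and correct substitute, so the issue is only presentational. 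The trade-off to note: the paper's $\Ad_{S_1}=\Ad_{S_2}$ computation is reused verbatim in Theorem~\ref{thm:stinespring-paschke} to prove uniqueness of the mediating NCP-map $\sigma$, which need not be of the form $\Ad_S$ for an isometry; your intertwining trick is specific to isometries and would not serve there, so the paper's apparently longer detour buys a statement of wider applicability.
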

\begin{proof}
Let us deal with a pathological case.
    If~$V=0$,
        then~$\varphi=0$, $V'=0$, $\mathscr{K}=\{0\}$ and~$\pi = 0$,
	and so the unique linear map~$S \colon \{0\} \to \mathscr{K}'$
        satisfies the requirements.  Assume~$V \neq 0$.
        
\noindent \emph{(Uniqueness)}\ 
    Let~$S_1,S_2\colon \mathscr{K} \to \mathscr{K}'$
    be isometries
    with~$S_k V = V'$ and~$\Ad_{S_k} \after \pi' = \pi$.
    We must show that~$S_1=S_2$.
    We have, for all,
	    $a_1,\dotsc,a_n,\alpha_1,\dotsc,\alpha_n\in \mathscr{A}$,
	    $x_1,\dotsc,x_n,y_1,\dotsc,y_n\in \mathscr{H}$,
    and~$c\in \mathscr{C}$,
\begin{align*}
    \bigl< \ \Ad_{S_k} (c) \sum_i \pi(a_i) V x_i,\ 
                            \sum_j  \pi(\alpha_j) V y_j \ \bigr> 
        & \ =\ \sum_{i,j} \bigl< \ V^* \pi(\alpha_j^*)\,\Ad_{S_k} (c)\, \pi(a_i) V x_i,\ 
            y_j \ \bigr>  && \text{by rearranging}\\
        & \ =\  \sum_{i,j }\bigl< \ V^*  \Ad_{S_k }(\ \pi'(\alpha_j^*) 
		\,c\, \pi'(a_i)\ ) V x_i,\ 
            y_j \ \bigr>  &\quad& \text{by Lemma~\ref{lem:choi}}\\
        & \ =\  \sum_{i,j }\bigl< \ (V')^*   \pi'(\alpha_j^*) \,c\, \pi'(a_i) V' x_i,\ 
            y_j \ \bigr>  &\quad& \text{as $S_kV=V'$}
\end{align*}
Since the linear span 
of~$\pi(\mathscr{A})V\mathscr{H}$ is dense in~$\mathscr{K}$,
we get~$\Ad_{S_1} = \Ad_{S_2}$.
    Thus~$\lambda S_1 = S_2$ for some~$\lambda \in \C$ 
    by Lemma~\ref{lem:adv}.
    Since~$V\neq 0$,
    there is $x \in \mathscr{H}$ with $Vx\neq 0$.
    Then~$S_1 V x = V' x = S_2 V x = \lambda S_1 V x$,
    and so~$\lambda=1$. Thus~$S_1 = S_2$, as desired.

\noindent \emph{(Existence)}\ 
Note that for all~$a_1, \ldots, a_n \in \mathscr{A}$
        and~$x_1, \ldots, x_n \in \mathscr{H}$,
    we have
\begin{equation*}
    \bigl\|\  \sum_i \pi(a_i) Vx_i \ \bigr\|^2
        \ = \  \sum_{i,j} \left< V^*\pi(a_j^*a_i) Vx_i, x_j  \right> 
        \ =\ \sum_{i,j} \left< \varphi(a_j^*a_i)x_i, x_j  \right> 
        \ =\ \bigl\|\  \sum_i \pi'(a_i) V'x_i \ \bigr\|^2.
\end{equation*}
    Hence there is a unique
isometry~$S\colon \mathscr{K} \to \mathscr{K}'$
        such that~$S \pi(a)Vx = \pi'(a)V'x$
	for all~$a\in\mathscr{A}$
		and~$x\in \mathscr{H}$.
Since $SVx=S\pi(1)Vx = \pi'(1)V'x = V'x$
for all~$x\in \mathscr{H}$,
we have~$SV=V'$. 
Further, for all~$a, a_1, \ldots, a_n \in \mathscr{A}$,
        and~$x_1, \ldots, x_n \in \mathscr{H}$, we have
\begin{equation*}
S \pi(a)  \sum_i \pi(a_i) Vx_i 
= \sum_i S \pi(a a_i) Vx_i 
= \sum_i \pi'(a a_i) V'x_i 
= \pi'(a) \sum_i \pi'(a_i) V'x_i 
= \pi'(a) S \sum_i \pi(a_i) Vx_i.
\end{equation*}
Since the linear span 
of~$\pi(\mathscr{A})V\mathscr{H}$ is dense in~$\mathscr{K}$,
we get~$S\pi(a) = \pi'(a) S$.
Note that~$S^*S= 1$, because~$S$ is an isometry.
Thus~$S^* \pi'(a) S
                = S^* S \pi(a)
                = \pi(a)$,
and so~$\Ad_S \circ \pi' = \pi$.
\end{proof}

\begin{thm}\label{thm:stinespring-paschke}
Let $(\mathscr{K}, \pi, V)$ 
be a minimal normal Stinespring dilation
of an NCP-map $\varphi\colon \mathscr{A}\to B(\mathscr{H})$.
    Then $\xymatrix{ \mathscr{A} \ar[r]|-\pi & B(\mathscr{K})
        \ar[rr]|-{ V(\,\cdot\,) V^*} && B(\mathscr{H}) }$
    is a Paschke dilation of~$\varphi$.

\end{thm}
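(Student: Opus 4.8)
The plan is to verify the easy factorization data and then concentrate on the universal property, whose existence part is the heart of the matter. First I would note that $B(\mathscr{K})$ is a von Neumann algebra, that $\pi$ is NMIU by hypothesis, that $\Ad_V$ is NCP, and that $\varphi=\Ad_V\after\pi$ by the definition of a Stinespring dilation; so $\xymatrix{\mathscr{A}\ar[r]|-\pi & B(\mathscr{K})\ar[r]|-{\Ad_V} & B(\mathscr{H})}$ is indeed a factorization of $\varphi$ through an NMIU-map followed by an NCP-map. It remains to check the universal property, so I fix an arbitrary competing factorization $\xymatrix{\mathscr{A}\ar[r]|-{\varrho'} & \mathscr{P}' \ar[r]|-{f'} & B(\mathscr{H})}$ with $\mathscr{P}'$ a von Neumann algebra, $\varrho'$ NMIU, $f'$ NCP, and $\varphi=f'\after\varrho'$, and must produce a unique NCP-map $\sigma\colon\mathscr{P}'\to B(\mathscr{K})$ with $\sigma\after\varrho'=\pi$ and $\Ad_V\after\sigma=f'$.

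For existence, the key move is to dilate the competitor. Since $f'\colon\mathscr{P}'\to B(\mathscr{H})$ is an NCP-map into a $B(\mathscr{H})$, Stinespring's theorem gives a normal Stinespring dilation $(\mathscr{K}',\pi'',W)$ of $f'$, so $f'=\Ad_W\after\pi''$ with $\pi''\colon\mathscr{P}'\to B(\mathscr{K}')$ NMIU and $W\colon\mathscr{H}\to\mathscr{K}'$. Then $\pi''\after\varrho'$ is NMIU and $\varphi=f'\after\varrho'=\Ad_W\after(\pi''\after\varrho')$, so $(\mathscr{K}',\,\pi''\after\varrho',\,W)$ is a normal Stinespring dilation of $\varphi$. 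Because $(\mathscr{K},\pi,V)$ is minimal, Proposition~\ref{prop:stinespring-spatial} supplies a (unique) isometry $S\colon\mathscr{K}\to\mathscr{K}'$ with $SV=W$ and $\pi=\Ad_S\after\pi''\after\varrho'$. I would then set $\sigma=\Ad_S\after\pi''$, which is NCP as a composite of an NMIU- and an NCP-map. The required identities are one-line checks: $\sigma\after\varrho'=\Ad_S\after\pi''\after\varrho'=\pi$, and, using the contravariant functoriality $\Ad_V\after\Ad_S=\Ad_{SV}=\Ad_W$, we get $\Ad_V\after\sigma=\Ad_W\after\pi''=f'$.

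For uniqueness, let $\sigma$ be any NCP-map with $\sigma\after\varrho'=\pi$ and $\Ad_V\after\sigma=f'$; I would show its matrix elements against the dense set $\pi(\mathscr{A})V\mathscr{H}$ are forced. Concretely, for $a_1,a_2\in\mathscr{A}$, $x_1,x_2\in\mathscr{H}$ and $p\in\mathscr{P}'$, the identity $\pi=\sigma\after\varrho'$ lets Lemma~\ref{lem:choi} rewrite $\pi(a_2)^*\sigma(p)\pi(a_1)=\sigma(\varrho'(a_2^*)\,p\,\varrho'(a_1))$, whence
\[
\langle\,\sigma(p)\,\pi(a_1)Vx_1,\ \pi(a_2)Vx_2\,\rangle
= \langle\, V^*\sigma(\varrho'(a_2^*)\,p\,\varrho'(a_1))\,Vx_1,\ x_2\,\rangle
= \langle\, f'(\varrho'(a_2^*)\,p\,\varrho'(a_1))\,x_1,\ x_2\,\rangle ,
\]
using $\Ad_V\after\sigma=f'$ in the last step. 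The right-hand side depends only on $f'$, $\varrho'$ and $V$, not on $\sigma$, so $\sigma(p)$ is pinned down on the dense linear span of $\pi(\mathscr{A})V\mathscr{H}$; since $\sigma(p)$ is bounded and $p$ arbitrary, $\sigma$ is unique. (In particular the $\sigma$ built above does not depend on the chosen dilation of $f'$.)

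I expect the existence step to be the main obstacle: the nonobvious idea is that an arbitrary competing factorization, which a priori has nothing to do with Hilbert spaces, can be converted into a bona fide normal Stinespring dilation of $\varphi$ by Stinespring-dilating $f'$, so that the minimality of $(\mathscr{K},\pi,V)$—packaged as the universal property of Proposition~\ref{prop:stinespring-spatial}—hands us the mediating isometry $S$ and hence the mediating map $\sigma=\Ad_S\after\pi''$. Once $S$ exists the two commutation identities are immediate, and uniqueness, which might look as if it needs minimality in a subtle way, drops out cleanly from Lemma~\ref{lem:choi} together with density of $\pi(\mathscr{A})V\mathscr{H}$.
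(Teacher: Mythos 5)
Your proof is correct and takes essentially the same route as the paper's: both Stinespring-dilate $f'$, observe that precomposing with $\varrho'$ turns this into a normal Stinespring dilation of $\varphi$, invoke Proposition~\ref{prop:stinespring-spatial} to obtain the isometry $S$ and set $\sigma = \Ad_S \after \pi''$, and both prove uniqueness by the matrix-element computation combining Lemma~\ref{lem:choi}, $\Ad_V \after \sigma = f'$, and density of the span of $\pi(\mathscr{A})V\mathscr{H}$. The only (immaterial) difference is that the paper picks a \emph{minimal} dilation of $f'$, whereas you rightly use an arbitrary one, since Proposition~\ref{prop:stinespring-spatial} needs minimality only of $(\mathscr{K},\pi,V)$.
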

\parpic[r]{
\xymatrix{
\mathscr{A} \ar@/_/[rdd]_{\varrho'}\ar[rd]^\pi \ar[rr]^\varphi
&& B(\mathscr{H}) \\
& B(\mathscr{K}) \ar[ru]^{\Ad_V} \\
& \mathscr{P}' \ar[ruu]|{f'} \ar[r]_{\pi'}
& B(\mathscr{K}') \ar[uu]_{\Ad_{V'}} \ar[lu]|{\Ad_S}
}
}
\begin{proof}
Let~$\mathscr{P}'$ be a von Neumann algebra.
Let
$\varrho' \colon \mathscr{A} \to \mathscr{P}'$
be
a NMIU-map,
and
$f'\colon \mathscr{P}' \to B(\mathscr{K})$
an NCP-map
with $f' \after \varrho' = \varphi$.
We must show that there is a unique
NCP-map~$\sigma\colon \mathscr{P}'\to B(\mathscr{K})$
with~$\sigma \after \varrho' = \pi$
and~$\Ad_V \after \sigma = f'$.
The uniqueness of~$\sigma$ 
follows by the same reasoning 
we used to show that~$\Ad_{S_1}=\Ad_{S_2}$
in Proposition~\ref{prop:stinespring-spatial}.
To show such $\sigma$ exists,
let~$(\mathscr{K}', \pi', V')$
be a minimal normal Stinespring dilation of~$f'$.
Note that~$(\mathscr{K}', \pi' \after \varrho', V')$
is a normal Stinespring dilation of~$\varphi$.
Thus, by Proposition~\ref{prop:stinespring-spatial},
there is a (unique) isometry~$S\colon \mathscr{K} \to \mathscr{K}'$
such that~$SV=V'$ and~$\Ad_S \after \pi' \after \varrho' = \pi$.
Define~$\sigma \equiv \Ad_S \after \pi'$.
Clearly~$\sigma \after \varrho' = 
            \Ad_s \after \pi' \after \varrho' = \pi$
and~$\Ad_V \after \sigma = \Ad_V \after \Ad_S \after \pi'
=  \Ad_{V'} \after \pi' = f'$, as desired.
\end{proof}

If we combine Theorem~\ref{thm:stinespring-paschke} with 
Lemma~\ref{lem:dilationsareisomorphic}
we get the following.
\begin{cor}
Let~$(\mathscr{K}, \pi,V)$ be a minimal 
normal Stinespring dilation
of an NCP-map $\varphi\colon \mathscr{A}\to B(\mathscr{H})$,
and let~$\xymatrix{ \mathscr{A} \ar[r]|-\varrho & \mathscr{P}
        \ar[r]|-{f} & B(\mathscr{H}) }$
	be a Paschke dilation of~$\varphi$.
  Then there is a unique
   NMIU-isomorphism $\vartheta\colon B(\mathscr{K}) \to \mathscr{P}$
        with~$\varrho = \vartheta \after \pi$ and $f \after \vartheta = \Ad_V$.
\end{cor}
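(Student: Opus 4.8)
The plan is to observe that the hypotheses provide \emph{two} Paschke dilations of the same NCP-map~$\varphi$, and then to quote Lemma~\ref{lem:dilationsareisomorphic}. First I would feed the given minimal normal Stinespring dilation~$(\mathscr{K},\pi,V)$ into Theorem~\ref{thm:stinespring-paschke}. That theorem tells us precisely that the pair $\pi\colon\mathscr{A}\to B(\mathscr{K})$ and $\Ad_V\colon B(\mathscr{K})\to B(\mathscr{H})$ is a Paschke dilation of~$\varphi$: indeed~$\pi$ is NMIU by the definition of a Stinespring dilation, $\Ad_V$ is NCP, and~$\Ad_V\after\pi=\varphi$.

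Now I have on the table the Paschke dilation just produced, whose middle algebra is~$B(\mathscr{K})$, together with the Paschke dilation~$(\varrho,\mathscr{P},f)$ supplied in the hypothesis, both of the same map~$\varphi$. I would then apply Lemma~\ref{lem:dilationsareisomorphic} with~$(\varrho_1,\mathscr{P}_1,f_1)=(\pi,B(\mathscr{K}),\Ad_V)$ and~$(\varrho_2,\mathscr{P}_2,f_2)=(\varrho,\mathscr{P},f)$. The lemma yields a unique NMIU-isomorphism~$\vartheta\colon B(\mathscr{K})\to\mathscr{P}$ with~$\vartheta\after\pi=\varrho$ and~$f\after\vartheta=\Ad_V$, which is exactly the claimed statement (rewriting~$\vartheta\after\pi=\varrho$ as~$\varrho=\vartheta\after\pi$).

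Because the corollary is nothing more than the juxtaposition of these two earlier results, I do not anticipate a genuine mathematical obstacle; the work is entirely in the bookkeeping. The only points deserving attention are to keep the convention~$\Ad_V(A)=V^*AV$ consistent with the ``$V(\,\cdot\,)V^*$'' appearing in the statement of Theorem~\ref{thm:stinespring-paschke}, and to note that the uniqueness clause of Lemma~\ref{lem:dilationsareisomorphic} transfers directly, so that no separate uniqueness argument for~$\vartheta$ is needed.
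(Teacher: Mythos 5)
Your proposal is correct and is exactly the paper's own argument: the paper introduces this corollary with the words ``If we combine Theorem~\ref{thm:stinespring-paschke} with Lemma~\ref{lem:dilationsareisomorphic} we get the following,'' which is precisely your instantiation of the lemma with~$(\pi, B(\mathscr{K}), \Ad_V)$ and~$(\varrho,\mathscr{P},f)$. Your bookkeeping, including the uniqueness clause transferring directly from the lemma, matches the intended proof.
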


\section{Existence of the Paschke Dilation}
We will show
that every  $NCP$-map~$\varphi$
between von Neumann algebras
has a Paschke dilation,
see Theorem~\ref{thm:paschke}.
For this we employ the theory of
self-dual
Hilbert $\mathscr{B}$-modules
--- developed by Paschke ---
which are,
roughly speaking,
 Hilbert spaces
in which the field of complex numbers
has been replaced by  a von Neumann algebra~$\mathscr{B}$.
Nowadays,
the more general
(not necessarily self-dual)
Hilbert $\mathscr{B}$-modules, where $\mathscr{B}$ is a $C^*$-algebra,
have become more prominent,
and so it seems appropriate
to point out from the get--go
that both
self-duality
and the fact that~$\mathscr{B}$ is a von Neumann algebra
(also a type of self-duality, by the way)
seem to be essential in 
the proof of Theorem~\ref{thm:paschke}.

We review the
definitions and results
we need 
from the theory of self-dual Hilbert $\mathscr{B}$-modules.
\begin{overview}
\label{ov:hmod}
Let~$\mathscr{B}$
be a von Neumann algebra.
\begin{enumerate}
\item
A \keyword{pre-Hilbert $\mathscr{B}$-module}~$X$
(see Def.~2.1 of~\cite{bew154}\footnote{%
 Although in~\cite{bew154}
$\left<x,y\right>$
is linear in~$x$ and anti-linear in~$y$,
we have chosen to adopt the now dominant
convention
that $\left<x,y\right>$ is anti-linear in~$x$
and linear in~$y$.})
is a right $\mathscr{B}$-module
equipped
with a \textbf{$\mathscr{B}$-valued inner product},
that is,
a map $\left<\,\cdot\,,\,\cdot\,\right>\colon X\times X\to\mathscr{B}$
such that,
for all~$x,y,y'\in X$ and~$b\in \mathscr{B}$,
\begin{multicols}{2}
\begin{enumerate}
\item
$\left<x,(y+y')b\right> = \left<x,y\right>b+\left<x,y'\right>b$;
\item
$\left<x,x\right>\geq 0$
	and $\left<x,x\right>=0$ iff $x=0$;
\item
$\left<x,y\right>^*=\left<y,x\right>$.
\end{enumerate}
\end{multicols}
\item
A \keyword{Hilbert $\mathscr{B}$-module}
(see Def.~2.4 of~\cite{bew154})
is a pre-Hilbert $\mathscr{B}$-module~$X$
which is complete
with respect to the norm $\|\,\cdot\,\|$
on~$X$ given by $\|x\|=\smash{\sqrt{\|\left<x,x\right>\|}}$.

\item
A Hilbert $\mathscr{B}$-module~$X$
is \keyword{self-dual}
(see \S3 of~\cite{bew154})
if every bounded module map $\tau \colon X\to\mathscr{B}$
is of the form $\tau=\left<x,\,\cdot\,\right>$ for some~$x\in X$.

\item
Self-duality is essential for the following result.
Let~$T\colon X\to Y$
be a bounded module map
between Hilbert $\mathscr{B}$-modules.
If~$X$ is self-dual,
then it is `adjointable';
that is: there is a
unique bounded module map
$T^*\colon Y\to X$,
called the \keyword{adjoint} of~$T$,
with $\left<Tx,y\right> = \left<x,T^*y\right>$
for all~$x\in X$ and~$y\in Y$
(see Prop.~3.4 of~\cite{bew154}).

\item
\label{hmod:dual}
Let~$X$ be a pre-Hilbert $\mathscr{B}$-module.
One can extend~$X$ to a self-dual Hilbert $\mathscr{B}$-module as follows.
The set~$X'$ of
bounded module maps from~$\tau \colon X\to \mathscr{B}$
is called the \keyword{dual} of~$X$,
and is a $\mathscr{B}$-module
via $(\tau \cdot b )(x) = b^*\cdot \tau(x)$
(see line 4 of p.~450 of~\cite{bew154}).
Note that~$X$
sits inside~$X'$
via the  
injective module map
$x\mapsto \hat{x}\equiv \left<x,\,\cdot\,\right>$.
In fact, $X'$ can
be  equipped with an $\mathscr{B}$-valued inner product
that makes~$X'$ into a self-dual
Hilbert $\mathscr{B}$-module
with $\left<\tau,\hat{x}\right>=\tau(x)$
for all~$\tau\in X'$ and~$x\in X$
(see Thm.~3.2 of~\cite{bew154}).

\item
\label{hmod:extension-to-dual}
Any bounded module map $T\colon X\to Y$
between pre-Hilbert $\mathscr{B}$-modules
has a unique extension
to a bounded module map $\tilde T\colon X'\to Y'$
(see Prop.~3.6 of~\cite{bew154}).

It follows that any bounded module map $T\colon X\to Y$
from a pre-Hilbert $\mathscr{B}$-module
into a self-dual Hilbert $\mathscr{B}$-module
has a unique extension $\overline{T}\colon X'\to Y$.

\item
\label{hmod:ax}
Let~$X$ be a self-dual Hilbert $\mathscr{B}$-module.
The set~$\A(X)$ of bounded
module maps on~$X$
forms a von Neumann algebra
(see Prop.~3.10 of~\cite{bew154}).%
\footnote{The superscript~$a$ in~$\A(X)$ stands
for adjointable, which is automatic for
bounded module-maps on a self-dual~$X$.}
Addition and scalar multiplication
are computed coordinate-wise in~$\A(X)$;
multiplication
is given by composition,
and involution is the adjoint.

An element~$t$ of~$\A(X)$ is positive
iff $\left<x,x\right>\geq 0$
for all~$x\in X$
(see Lem~4.1 of~\cite{lance:hilbert}).
If~$X$ happens to be the dual of a pre-Hilbert $\mathscr{B}$-module~$X_0$,
then we even have $t\geq0$ iff $\left<x,tx\right>\geq 0$
for all~$x\in X_0$.

It follows that $T^*T$ is positive in~$\A(X)$
for any bounded module
map $T\colon X\to Y$.

We will also need the fact
that  $\left<x,(\,\cdot\,)x\right>\colon \A(X)\to \mathscr{B}$
is normal for every~$x\in X$,
which
follows from the observation
 that $f(\left<x,(\,\cdot\,)x\right>)\colon \A(X)\to \mathbb{C}$
is normal for every positive normal map $f\colon \mathscr{B}\to\mathbb{C}$,
which in turn follows form the description of the predual
of~$\A(X)$ in Proposition~3.10
of~\cite{bew154}.
\end{enumerate}
\end{overview}

\begin{dfn}
Let $\varphi\colon \mathscr{A}\to\mathscr{B}$
be an NCP-map between von Neumann algebras.
A complex bilinear map
of the form $B\colon \mathscr{A}\times \mathscr{B}\to X$,
where $X$ is a self-dual Hilbert $\mathscr{B}$-module,
is called \keyword{$\varphi$-compatible}
if, there is $r>0$ such that, for all~$a_1,\dotsc,a_n\in\mathscr{A}$
and $b_1,\dotsc,b_n\in \mathscr{B}$,
\begin{equation}
\label{eq:phi-compatible}
\bigl\|\,\sum_{i} B(a_i,b_i)\,\bigr\|^2
\ \leq\ 
r\cdot
\bigl\|\,
\sum_{i,j} b_i^*\varphi(a_i^*a_j)b_j 
\,\bigr\|,
\end{equation}
and $B(a,b_1)b_2 = B(a,b_1b_2)$
for all~$a\in\mathscr{A}$
	and $b_1,b_2\in\mathscr{B}$.
\end{dfn}

\begin{thm}
\label{thm:paschke}
Let $\varphi\colon \mathscr{A}\to\mathscr{B}$
be an NCP-map between von Neumann algebras.
\begin{enumerate}
\item
\label{paschke-1}
There is a self-dual Hilbert 
$\mathscr{B}$-module $\mathscr{A}\otimes_\varphi \mathscr{B}$
and a $\varphi$-compatible
bilinear map 
\begin{equation*}
\otimes\colon \mathscr{A}\times \mathscr{B}
\to \mathscr{A}\otimes_\varphi \mathscr{B}
\end{equation*}
such that for every $\varphi$-compatible bilinear map 
$B\colon \mathscr{A}\times \mathscr{B}\to Y$
there is a unique bounded module
map $T\colon \mathscr{A}\otimes_\varphi\mathscr{B}\longrightarrow Y$
such that $T(a\otimes b)=B(a,b)$ for all~$a\in \mathscr{A}$
and $b\in\mathscr{B}$.
\item
\label{paschke-2}
For every~$a_0\in\mathscr{A}$
there is a unique bounded module map
$\varrho(a_0)$ on $\mathscr{A}\otimes_\varphi\mathscr{B}$
given by 
\begin{equation*}
\varrho(a_0)(a\otimes b) \ =\  (a_0a)\otimes b,
\end{equation*}
and the assignment $a\mapsto \varrho(a)$
yields an NMIU-map 
$\varrho\colon \mathscr{A}\to \A(\mathscr{A}\otimes_\varphi \mathscr{B})$.

\item
\label{paschke-3}
The assignment $T\mapsto \left<1\otimes 1,T(1\otimes 1)\right>$
gives an NCP-map $f\colon \A(\mathscr{A}\otimes_\varphi\mathscr{B})
\longrightarrow \mathscr{B}$.

\item
\label{paschke-4}
$\xymatrix{\mathscr{A} 
\ar[r]|-\varrho
&
\A(\mathscr{A}\otimes_\varphi\mathscr{B})
\ar[r]|-{f}
&
\mathscr{B}}$
is a Paschke dilation of~$\varphi$.
\end{enumerate}
\end{thm}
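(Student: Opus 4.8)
The plan is to run Paschke's GNS-style construction for items~\ref{paschke-1}--\ref{paschke-3} and then establish the universal property~\ref{paschke-4} by an $\Ad$-of-an-isometry argument mirroring Proposition~\ref{prop:stinespring-spatial}. For~\ref{paschke-1} I would start from the algebraic tensor product of $\mathscr{A}$ and $\mathscr{B}$, viewed as a right $\mathscr{B}$-module via the second leg, and equip it with the $\mathscr{B}$-valued semi-inner product determined by $\langle a\otimes b, a'\otimes b'\rangle = b^*\varphi(a^*a')b'$. Complete positivity of $\varphi$ is exactly what makes $[\varphi(a_i^*a_j)]$ positive in $M_n(\mathscr{B})$, hence $\sum_{i,j}b_i^*\varphi(a_i^*a_j)b_j\geq 0$, so this form is positive semidefinite. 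Quotienting by its null space yields a pre-Hilbert $\mathscr{B}$-module $X_0$, and I define $\mathscr{A}\otimes_\varphi\mathscr{B}\equiv X_0'$ to be its self-dual dual (Overview~\ref{ov:hmod}.\ref{hmod:dual}), with $a\otimes b$ the image of its class under $X_0\hookrightarrow X_0'$. Then $\otimes$ is $\varphi$-compatible with $r=1$ since $\|\sum_i a_i\otimes b_i\|^2 = \|\sum_{i,j}b_i^*\varphi(a_i^*a_j)b_j\|$. For the universal property, a $\varphi$-compatible $B\colon\mathscr{A}\times\mathscr{B}\to Y$ gives a well-defined bounded module map $T_0\colon X_0\to Y$ by $T_0(\sum_i a_i\otimes b_i)=\sum_i B(a_i,b_i)$: estimate~\eqref{eq:phi-compatible} shows both that null vectors map to $0$ and that $\|T_0\|\leq\sqrt r$. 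Since $Y$ is self-dual, $T_0$ extends uniquely to $T\colon X_0'\to Y$ (Overview~\ref{ov:hmod}.\ref{hmod:extension-to-dual}), and uniqueness of the extension gives uniqueness of $T$.

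For~\ref{paschke-2}, fix $a_0$ and apply~\ref{paschke-1} to $B_{a_0}(a,b)=(a_0a)\otimes b$. Its $\varphi$-compatibility (with $r=\|a_0\|^2$) rests on the operator inequality $\sum_{i,j}b_i^*\varphi(a_i^*a_0^*a_0a_j)b_j\leq\|a_0\|^2\sum_{i,j}b_i^*\varphi(a_i^*a_j)b_j$, which I would prove by writing $\|a_0\|^2-a_0^*a_0=d^*d$ and noting the difference equals $\sum_{i,j}b_i^*\varphi((da_i)^*(da_j))b_j\geq 0$ by complete positivity. The resulting $\varrho(a_0)$ satisfies $\varrho(a_0)(a\otimes b)=(a_0a)\otimes b$; linearity, unitality and multiplicativity of $a_0\mapsto\varrho(a_0)$ are read off the generators, and $\varrho(a_0)^*=\varrho(a_0^*)$ follows from $\langle\varrho(a_0)(a\otimes b),a'\otimes b'\rangle=b^*\varphi(a^*a_0^*a')b'=\langle a\otimes b,\varrho(a_0^*)(a'\otimes b')\rangle$. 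Normality of $\varrho$ I would get from the fact that each $\langle x,\varrho(\,\cdot\,)x\rangle$ is normal (a composition of $\varphi$ with normal maps): for a bounded increasing net $a_\lambda\uparrow a$ the increasing bounded net $\varrho(a_\lambda)$ has an ultraweak limit $T\leq\varrho(a)$ with $\langle x,Tx\rangle=\langle x,\varrho(a)x\rangle$ for all $x$, forcing $T=\varrho(a)$. For~\ref{paschke-3}, the map $f=\langle 1\otimes 1,(\,\cdot\,)(1\otimes 1)\rangle$ is normal by Overview~\ref{ov:hmod}.\ref{hmod:ax}, and completely positive because for $[T_{kl}]=[S_{kl}]^*[S_{kl}]\geq 0$ and $b_1,\dots,b_n\in\mathscr{B}$ one has $\sum_{k,l}b_k^*\langle 1\otimes 1,T_{kl}(1\otimes 1)\rangle b_l=\sum_m\langle\sum_k S_{mk}(1\otimes 1)b_k,\sum_l S_{ml}(1\otimes 1)b_l\rangle\geq 0$.

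Turning to~\ref{paschke-4}, first $f(\varrho(a))=\langle 1\otimes 1,a\otimes 1\rangle=\varphi(a)$, so $f\after\varrho=\varphi$. Given a competing factorization $\varrho'\colon\mathscr{A}\to\mathscr{P}'$ (NMIU) and $f'\colon\mathscr{P}'\to\mathscr{B}$ (NCP) with $f'\after\varrho'=\varphi$, let $\varrho''\colon\mathscr{P}'\to\A(\mathscr{P}'\otimes_{f'}\mathscr{B})$ be the NMIU-map from~\ref{paschke-2} applied to $f'$. The key device is the module map $W\colon\mathscr{A}\otimes_\varphi\mathscr{B}\to\mathscr{P}'\otimes_{f'}\mathscr{B}$ with $W(a\otimes b)=\varrho'(a)\otimes b$, obtained from~\ref{paschke-1}: the bilinear map $(a,b)\mapsto\varrho'(a)\otimes b$ is $\varphi$-compatible because $\langle\varrho'(a)\otimes b,\varrho'(a')\otimes b'\rangle=b^*f'(\varrho'(a^*a'))b'=b^*\varphi(a^*a')b'$, so in fact $W$ is isometric and $W^*W=1$ (adjointability via Overview~\ref{ov:hmod}.\ref{hmod:ax}). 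I then set $\sigma\equiv\Ad_W\after\varrho''$, which is NCP. Using $W(1\otimes 1)=1\otimes 1$ and $W^*W=1$ one checks $\sigma\after\varrho'=\varrho$ (since $W^*(\varrho'(a_0a)\otimes b)=W^*W(a_0a\otimes b)=\varrho(a_0)(a\otimes b)$) and $f\after\sigma=f'$ (since $f(\sigma(p))=\langle W(1\otimes 1),\varrho''(p)(1\otimes 1)\rangle=f'(p)$).

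It remains to show $\sigma$ is unique, which I would handle exactly as the uniqueness part of Proposition~\ref{prop:stinespring-spatial} via Lemma~\ref{lem:choi}. If $\sigma'$ is any NCP-map with $\sigma'\after\varrho'=\varrho$ and $f\after\sigma'=f'$, then since $\varrho=\sigma'\after\varrho'$ with $\varrho,\varrho'$ both NMIU, Lemma~\ref{lem:choi} gives $\sigma'(\varrho'(a^*)\,p\,\varrho'(a'))=\varrho(a^*)\,\sigma'(p)\,\varrho(a')$. Evaluating through $f$, and using $a\otimes b=\varrho(a)(1\otimes 1)b$, this yields $\langle a\otimes b,\sigma'(p)(a'\otimes b')\rangle=b^*f(\varrho(a^*)\sigma'(p)\varrho(a'))b'=b^*f'(\varrho'(a^*)\,p\,\varrho'(a'))b'$, an expression depending only on $f'$ and $\varrho'$; hence $\langle u,\sigma'(p)v\rangle$ is pinned down on the generating set for every such $\sigma'$, forcing $\sigma'=\sigma$. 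I do not expect uniqueness itself to be hard; the genuine obstacles I anticipate are (i) checking that the semi-inner product and the compatibility estimate survive passage to the self-dual completion cleanly enough that $W$, $\varrho$ and $\sigma$ are well defined there, and (ii) the normality arguments, both of which lean on the module theory catalogued in Overview~\ref{ov:hmod}.
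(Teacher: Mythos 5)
Your proposal is correct and follows essentially the same route as the paper's proof: the same construction of $\mathscr{A}\otimes_\varphi\mathscr{B}$ as the self-dual completion $X_0'$ of the quotiented algebraic tensor product, the same normality and complete-positivity verifications via the facts in Overview~\ref{ov:hmod}, uniqueness of $\sigma$ via Lemma~\ref{lem:choi} evaluated on the generators $a\otimes b=\varrho(a)(1\otimes 1)b$, and existence via exactly the paper's (Skeide's) trick of applying the construction to $f'$ and setting $\sigma = W^*\varrho''(\,\cdot\,)W$ for the isometry $W(a\otimes b)=\varrho'(a)\otimes b$ (the paper's $S$). The only differences are cosmetic, e.g.\ proving the compatibility estimate for $\varrho(a_0)$ by writing $\|a_0\|^2-a_0^*a_0=d^*d$ rather than via the matrix-norm inequality $\|b^*(M_n\varphi)(a^*a_0^*a_0a)b\|\leq\|a_0\|^2\|b^*(M_n\varphi)(a^*a)b\|$.
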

\begin{proof}
\emph{(\ref{paschke-1})}\ 
To construct $\mathscr{A}\otimes_\varphi\mathscr{B}$,
we follow the lines of Theorem~5.2 of~\cite{bew154},
and start with the algebraic tensor product,
$\mathscr{A}\odot\mathscr{B}$,
whose elements are finite sums of the form $\sum_i a_i\otimes b_i$,
and which is a right $\mathscr{B}$-module
via $(\sum_ia_i\otimes b_i)\beta = \sum_ia_i\otimes(b_i\beta)$.
If we define $[\,\cdot\,,\,\cdot\,]$
on~$\mathscr{A}\odot\mathscr{B}$ by
\begin{equation*}
\bigl[\ \sum_i a_i\otimes b_i,\ \sum_j \alpha_j \otimes \beta_j\ \bigr]
\ = \ \sum_{i,j} b_i^* \varphi(a_i^*\alpha_j)\beta_j,
\end{equation*}
we get a $\mathscr{B}$-valued semi-inner product
on~$\mathscr{A}\odot\mathscr{B}$,
and a (proper) $\mathscr{B}$-valued inner product
on the quotient $X_0 = (\mathscr{A}\odot\mathscr{B})/N$,
where~$N=\{x\in\mathscr{A}\odot\mathscr{B}\colon [x,x]=0\}$,
so that~$X_0$
is a pre-Hilbert $\mathscr{B}$-module.
(That~$N$ is a submodule
of~$\mathscr{A}\odot\mathscr{B}$
is not entirely obvious,
see Remark~2.2 of~\cite{bew154}.)
Now,
define~$\mathscr{A}\otimes_\varphi \mathscr{B} := X_0'$
(where $X_0'$ is the dual of~$X_0$ from 
Overview~\ref{ov:hmod}\eqref{hmod:dual}),
and let~$\otimes \colon \mathscr{A}\times \mathscr{B}
\to \mathscr{A}\otimes_\varphi\mathscr{B}$
be given by $a\otimes b = \smash{\widehat{a\otimes b + N}}$.
Then~$\mathscr{A}\otimes_\varphi \mathscr{B}$
is a self-dual Hilbert $\mathscr{B}$-module,
and~$\otimes$
is a $\varphi$-compatible bilinear map.

Let~$B\colon \mathscr{A}\times\mathscr{B}\to Y$
be a $\varphi$-compatible bilinear map
to some self-dual Hilbert $\mathscr{B}$-module~$Y$.
We must show that there is a unique bounded
module map $T\colon\mathscr{A}\otimes_\varphi\mathscr{B}\to Y$
such that~$T(a\otimes b) = B(a,b)$.
By Overview~\ref{ov:hmod}\eqref{hmod:extension-to-dual},
it suffices to show that there is a
unique bounded module map $T\colon X_0\to Y$
with $T(a\otimes b) = B(a,b)$.
Since $\mathscr{A}\odot\mathscr{B}$
is generated by elements of the form~$a\otimes b$,
uniqueness is obvious.
Concerning existence,
there is a (unique) linear map $S\colon \mathscr{A} \odot \mathscr{B}
\to X$ with $S(a\otimes b) = B(a,b)$
by the universal property of the algebraic tensor product.
Note that the  kernel of~$S$ contains~$N$,
because if $x=\sum_i a_i\otimes b_i$
is from~$N$,
then $[x,x]=\sum_{i,j} b_i^*\varphi(a_i^*a_j)b_j=0$,
and so~$\|S(x)\| \equiv \|\sum_iB(a_i,b_i)\|=0$
by Equation~\eqref{eq:phi-compatible}. 
Thus there is a unique linear map $T\colon X_0\to Y$
with $T(a\otimes b) = B(a,b)$.
By Equation~\eqref{eq:phi-compatible}, $T$ is bounded.
Finally, 
since $B(a,b\beta)=B(a,b)\beta$,
it is easy to see that~$S$ and~$T$ are module maps.

\emph{(\ref{paschke-2})}\ 
Let~$a_0\in \mathscr{A}$
be given.
To obtain the bounded module map~$\varrho(a_0)$
on~$\mathscr{A}\otimes_\varphi\mathscr{B}$,
it suffices to show the bilinear
map $B\colon \mathscr{A}\times \mathscr{B}\to 
\mathscr{A}\otimes_\varphi\mathscr{B}$
given by $B(a,b)=(a_0a)\otimes b$
is $\varphi$-compatible.
It is easy to see that $B(a,b)\beta= B(a,b\beta)$.
Concerning Equation~\eqref{eq:phi-compatible},
let $a_1,\dotsc,a_n\in\mathscr{A}$
and~$b_1,\dotsc,b_n\in\mathscr{B}$
be given.
Then we have,
writing $a$ for the row vector $(a_1 \dotsb a_n)$,
$b$ for the column vector $(b_1 \dotsb b_n)$,
\begin{alignat*}{3}
\bigl\|\, \sum_i B(a_i,b_i)\,\bigr\|^2
\ &=\ 
\bigl\|\, \sum_i (a_0a_i)\otimes b_i \,\bigr\|^2 
\ =\ 
\bigl\|\, \sum_{i,j} b_i^* \varphi(a_i^* a_0^*a_0 a_j) b_j\,\bigr\| 
\ =\ 
\|b^* (M_n\varphi) (a^* a_0^*a_0 a) b\| \\
\ &\leq \ 
\|a_0^*a_0\|\,\cdot\,
\|\,b^* (M_n\varphi)(a^* a) b \,\|   
\ =\ \|a_0\|^2\,\cdot\, \bigl\|\,\sum_{i,j} b_i^*\varphi(a_i^*a_j)b_j
\,\bigr\|.
\end{alignat*}
Thus~$B$ is $\varphi$-compatible,
and so there is a unique bounded module map 
$\varrho(a_0)\colon \mathscr{A} \otimes_\varphi \mathscr{B}
\longrightarrow \mathscr{A}\otimes_\varphi\mathscr{B}$
with $\varrho(a_0)(a\otimes b) = (a_0a)\otimes b$.
Since it is easy to see
that $a_0\mapsto \varrho(a_0)$
gives a multiplicative
involutive unital linear
map~$\varrho\colon\mathscr{A}\to \A(\mathscr{A}\otimes_\varphi\mathscr{B})$,
the only thing left to prove is that~$\varrho$ is normal.

Let~$D$ be a bounded directed set of self-adjoint elements of~$\mathscr{A}$.
To show that~$\varrho$ is normal,
we must prove that~$\varrho(\sup D) = \sup_{d\in D}\varrho(d)$.
 It suffices to show
that $\left<x,\varrho(\sup D)x\right>
= \left<x,\sup_{d\in D}\varrho(d)x\right>$
for all~$x\in X_0$.
Let~$x\in X_0$ be given and write $x=\sum_i a_i\otimes b_i$,
where $a_1,\dotsc,a_n\in\mathscr{A}$
and~$b_1,\dotsc,b_n\in\mathscr{A}$.
Then, if $a$ stands for the row vector $(a_1 \dotsb a_n)$
and~$b$ is the column vector~$(b_1,\dotsc,b_n)$,
we have
\begin{equation*}
\left<x,\smash{\varrho(\sup D)}x\right>
\ =\ b^* (M_n\varphi)(a^*\,\sup D\, a) b
\ = \ \sup_{d\in D}\, b^* (M_n\varphi)(a^*da) b
\ = \ \sup_{d\in D}\left<x,\varrho(d)x\right>
\ = \ \bigl<x,\sup_{d\in D} \varrho(d)x\bigr>,
\end{equation*}
where we used that
$b^*(M_n\varphi)(a^*(\,\cdot\,) a)b$
and $\left<x,(\,\cdot\,)x\right>$
are normal.
Thus~$\varrho$ is normal.

\emph{(\ref{paschke-3})}\ 
Write $e=1\otimes 1$.
We already know that~$\left<e,(\,\cdot\,)e\right>$
is normal,
and since for~$t_1,\dotsc,t_n\in \A(\mathscr{A}\otimes_\varphi \mathscr{B})$
and~$b_1,\dotsc,b_n\in \mathscr{B}$,
we have
$\sum_{i,j} b_i^*\left< e ,t_i^*t_j e \right>b_j
=
\left< \sum_i t_i e b_i,\sum_j t_j e b_j\right> \geq 0$,
we see by  Remark~5.1 of~\cite{bew154}, that 
$\left<e,(\,\cdot\,)e\right>$
is completely positive.

\emph{(\ref{paschke-4})}\ 
To begin,
note that $(f\circ \varrho)(a)
= \left<1\otimes 1,a\otimes 1\right>=\varphi(a)$
for all~$a\in \mathscr{A}$,
and so~$\varphi = f\circ \varrho$.

Suppose that~$\varphi$
factors
as $\xymatrix{\mathscr{A} \ar[r]|{\varrho'} &
\mathscr{P}' \ar[r]|{f'}  &
\mathscr{B}}$,
where~$\mathscr{P}'$
is a von Neumann algebra,
$\varrho'$ is an NMIU-map,
and~$f'$ is an NCP-map.
We must show  that there is a unique NCP-map
$\sigma\colon \mathscr{P}'\to \A(\mathscr{A}\otimes_\varphi \mathscr{B})$
with $f\circ \sigma = f'$ and~$\sigma\circ\varrho' = \varrho$.

\emph{(Uniqueness)}\ 
Let~$\sigma_1,\sigma_2\colon \mathscr{P}\to \A(\mathscr{A}\otimes_\varphi 
\mathscr{B})$
be NCP-maps with $f\circ\sigma_k = f' $ and 
$\sigma_k \circ \varrho' = \varrho$.
We must show that~$\sigma_1=\sigma_2$.
Let~$c\in \mathscr{P}'$
and~$x\in X_0$ be given.
It suffices to prove that~$\left<x,\sigma_1(c)x\right>
= \left<x,\sigma_2(c)x\right>$
(see Overview~\ref{ov:hmod}\eqref{hmod:ax}).
Write~$x=\sum_i a_i\otimes b_i$
where $a_i \in\mathscr{A}$,~$b_i\in\mathscr{B}$.
Then, $a_i\otimes b_i = \varrho(a_i)(1\otimes 1)b_i$,
and so
\begin{equation}
\label{eq:sigma}
\begin{alignedat}{3}
\left< x,\sigma_i(c)x\right>
\ &=\ 
\sum_{i,j} b_i^* f(\, \varrho(a_i^*)
	\sigma_i(c)\varrho(a_j) \,) b_j 
	\qquad&&\text{by an easy computation} \\
\ &=\ 
\sum_{i,j} b_i^* 
	f(\sigma_i(\varrho'(a_i^*) c\varrho'(a_j))) b_j 
	\qquad&&\text{by Lemma~\ref{lem:choi}} \\
	\ &=\ 
\sum_{i,j} b_i^* 
	f'(\varrho'(a_i^*) c\varrho'(a_j)) b_j 
	\qquad &&\text{since $f'= f\circ \sigma_i$}.
\end{alignedat}
\end{equation}
Hence $\left<x,\sigma_1(x)\right>
= \left<x,\sigma_2(x)\right>$,
and so~$\sigma_1 = \sigma_2$.

\emph{(Existence)}\ 
Recall that each self-adjoint 
bounded operator~$A$ on Hilbert space~$\mathscr{H}$
gives a bounded quadratic form~$x\mapsto \left<x,Ax\right>$,
that
every quadratic form arises in this way,
and that the operator~$A$ can be reconstructed from
its quadratic form.
One can develop a similar correspondence
in the case of Hilbert $\mathscr{B}$-modules,
which can be used to define~$\sigma$
from Equation~\eqref{eq:sigma}.
We will, however,
give a shorter proof
of the existence
of~$\sigma$, which was suggested to us by Michael Skeide.

The trick is to see that the construction that gave us
$\mathscr{A}\otimes_\varphi\mathscr{B}$
may also be applied to~$f'\colon \mathscr{P}'\to \mathscr{B}$
yielding maps
$\xymatrix{\mathscr{P}'
\ar[r]|-{\varrho''}
&
\A(\mathscr{P}'\otimes_{f'}\mathscr{B})
\ar[r]|-{f''}
&
\mathscr{B}
}$.
It suffices to find an NCP-map 
$\sigma'\colon \A(\mathscr{P'}\otimes_{f'}\mathscr{B})
\longrightarrow \A(\mathscr{A}\otimes_\varphi\mathscr{B})$
with $f'' = f\circ \sigma'$
and~$\varrho = \sigma'\circ \varrho''\circ \varrho'$
for then $\sigma = \sigma'\circ \varrho''$
will have the desired properties.

Let~$S\colon \mathscr{A}\otimes_\varphi \mathscr{B}
\longrightarrow \mathscr{P}'\otimes_{f'}\mathscr{B}$
be the bounded module map
given by~$S(a\otimes b) = \varrho'(a)\otimes b$,
which exists by part~\ref{paschke-1},
because a straightforward
computation shows
that $(a,b)\mapsto \varrho'(a)\otimes b$
gives a $\varphi$-compatible bilinear map
$\mathscr{A}\times \mathscr{B}\longrightarrow
\mathscr{P}'\otimes_{f'}\mathscr{B}$.
We claim that $\sigma' = S^*(\,\cdot\,)S$
fits the bill.

Let us begin by proving that~$\sigma'(1)\equiv S^* S  =1$.
Let~$x\in X_0$.
It suffices to show that $\left<x,S^*Sx\right>=\left<x,x\right>$.
Writing $x\equiv \sum_i a_i \otimes b_i$,
we have $\left< x,S^* Sx\right>
= \left<Sx,Sx\right>
= \sum_{i,j} b_i^* f'(\varrho'(a_i^*)\varrho'(a_j))b_j 
= \sum_{i,j} b_i^* \varphi(a_i^*a_j) b_j
= \left< x,x\right>$,
because~$\varrho'$ is multiplicative
and~$f'\circ\varrho'=\varphi$.
Thus~$S^*S=1$.

Note that~$\sigma'$ is completely positive,
because for all~$s_1,\dotsc,s_n \in
\A(\mathscr{P}'\otimes_{f'}\mathscr{B})$
and $t_1,\dotsc,t_n
\in \A(\mathscr{A}\otimes_\varphi \mathscr{B})$,
we have $\sum_{i,j} t_i^* \sigma'(s_i^*s_j)t_j
=  (\sum_i s_iSt_i)^*(\sum_j s_jSt_j) \geq 0$
(see Remark~5.1 of~\cite{bew154}).

Let~$x\in \mathscr{A}\otimes_\varphi\mathscr{B}$
be given.
Note that $\left<x,\sigma'(\,\cdot\,)x\right>
= \left<Sx,(\,\cdot\,)Sx\right>$ is normal.
From this it follows
that~$\sigma'$ is normal
(in the same way we proved that~$\varrho$ is normal
in~\eqref{paschke-2}).

Since from~$S(1\otimes 1)=\varrho(1)\otimes 1 = 1\otimes 1$
it swiftly follows that~$f\circ \sigma' = f''$,
the only thing left to show
is that~$\varrho=\sigma'\circ \varrho''\circ \varrho'$.
Let~$a,a_0\in \mathscr{A}$
and~$b\in \mathscr{B}$
be given.
By point~\eqref{paschke-1},
it suffices to show that $\varrho(a_0)(a\otimes b)
= \sigma'(\varrho''(\varrho'(a_0)))(a\otimes b)$.
Unfolding gives
$\sigma'(\varrho''(\varrho'(a_0)))(a\otimes b) 
= S^* (\varrho'(a_0a)\otimes b) = S^*S( \varrho(a_0)( a\otimes b))$,
but we already saw that~$S^*S=1$,
and so we are done.
\end{proof}

\section{Pure maps}
Schr\"odinger's equation is invariant under the reversal of time,
and so any isolated purely quantum mechanical process is invertible.
However, NCP-maps include non-invertible processes
such as measurement and discarding.
However, not all is lost, for a broad class of processes
is pure enough to be `reversed', e.g.~$(\Ad_V)^\dagger = \Ad_{V^*}$.
In fact, the Stinespring dilation theorem
states that every NCP-map into~$B(\mathscr{H})$
factors as a reversible~$\Ad_V$,
after a (possibly) non-reversible NMIU-map.

In this section we study two seemingly unrelated
definitions of pure (i.e.~reversible)
for arbitrary NCP-maps.
The first is a direct generalization of~$\Ad_V$,
and the second uses the Paschke dilation.
Both definitions turn out to be equivalent.

Before we continue, let's rule out two alternative definitions of pure.
\begin{inparaenum}
\item
Recall a state~$\varphi\colon \mathscr{A} \to \C$
is called pure, if is an extreme point among all states.
It does not make sense to define
an NCP-map to be pure if it is extreme,
because every NMIU-map is extreme (among the unital NCP-maps).
\item
Inspired by the GNS-correspondence between pure states
and irreducible representations,
St\o rmer defines a map~$\varphi\colon \mathscr{A} \to B(\mathscr{H})$
to be pure if the only maps below~$\varphi$
in the completely positive order are scalar multiples of~$\varphi$.
One can show that the St\o rmer pure NCP-maps
between~$B(\mathscr{H}) \to B(\mathscr{K})$
are exactly those of the form~$\Ad_V$, see Prop.~\ref{prop:stormerpure}.
In a non-factor
every central element gives a different completely positive map
below the identity and so if one generalizes
St\o rmer's definition to arbitrary NCP-maps,
the identity need not be pure.
\end{inparaenum}

Now, let us sketch our first definition of pure.
Consider~$V \colon \mathscr{K} \to\mathscr{H}$.
By polar decomposition
we may factor~$V = UA$,
where~$A\colon \mathscr{K} \to r(A) \mathscr{H}$ is a positive map
and~$U\colon r(A)\mathscr{H} \to \mathscr{K}$ is an isometry
and so~$\Ad_V = \Ad_A \after \Ad_U$.
The pure maps~$\Ad_A$ and~$\Ad_U$ are of a particularly simple form.
We will see they admit a dual universal property,
the first is (up to scaling) a \emph{compression}
and the second a \emph{corner} (Def.~\ref{dfn:ww16}).
This allows us to generalize
the notion of pure to arbitrary NCP-maps (Def.~\ref{dfn:pure}).
We show maps on the right-hand side of a Paschke dilation are pure.
Then we will show
the main result of the section:
an NCP-map is pure if and only if
the map on the left-hand side of its Paschke dilation is surjective.
As a corollary,
we show both the left- and right-hand side of a Paschke dilation
are extreme among the maps with the same value on~$1$.

\begin{dfn}\label{dfn:ww16}
Let~$a$ be an element of a von Neumann algebra~$\mathscr{A}$
with~$0\leq a\leq 1$.
\begin{enumerate}
    \item
    The least projection above~$a$,
        we call the \keyword{support projection} of~$a$,
        and we denote it as~$\ceil{a}$. Its de Morgan dual
    $\floor{a} \equiv 1-\ceil{1-a}$ is the greatest projection below~$a$.
        For any projection~$p$, write~$C_p$ for the \keyword{central carrier},
            that is: the least central projection above~$p$
	    (see e.g.~\cite[Def.~5.5.1]{kr}).

    \item
    For an NCP-map~$\varphi\colon \mathscr{A} \to \mathscr{B}$,
    we write~$\car \varphi$
        for the \keyword{carrier} of~$\varphi$, 
		the least projection of~$\mathscr{A}$
        such that $\varphi(\car \varphi) = \varphi(1)$.
        The map~$\varphi$ is said to be \keyword{faithful}
            if~$\car \varphi=1$.
        Equivalently, $\varphi$ is faithful
            if~$\varphi(a^*a) =0$ implies~$a^*a =0$ for all~$a \in \mathscr{A}$.
    \item
    We call the map~$h_a\colon \mathscr{A} \to \floor{a} \mathscr{A} \floor{a}$
            given by~$b \mapsto \floor{a} b \floor{a}$,
        the \keyword{standard corner} of~$a$.

    \item
    We call the map~$c_a\colon \ceil{a} \mathscr{A} \ceil{a} \to \mathscr{A}$
        given by~$b \mapsto \sqrt{a} b \sqrt{a}$,
        the \keyword{standard compression} of~$a$.

    \item
    A contractive NCP-map~$h \colon \mathscr{A} \to \mathscr{B}$
        is said to be a \keyword{corner}
        for an~$a \in [0,1]_{\mathscr{A}}$
    if~$h(a)=h(1)$ and for
        every (other) contractive NCP-map~$f \colon \mathscr{A} \to \mathscr{C}$
            with~$f(a)=f(1)$,
        there is a unique~$f'\colon \mathscr{B} \to \mathscr{C}$
            with~$f = f' \after h$.
    \item
    A contractive NCP-map~$c \colon \mathscr{B} \to \mathscr{A}$
        is said to be a \keyword{compression}
        for an~$a \in [0,1]_{\mathscr{A}}$
    if~$c(1)=a$ and for
        every (other) contractive NCP-map~$g \colon \mathscr{C} \to \mathscr{A}$
            with~$g(1)\leq a$,
        there is a unique~$g'\colon \mathscr{C} \to \mathscr{B}$
            with~$g = c \after g'$.
\end{enumerate}
\end{dfn}
\begin{prop}\label{prop:ww16}
Let~$\mathscr{A}$ be any von Neumann algebra
with effect~$a \in [0,1]_{\mathscr{A}}$.
\begin{enumerate}
    \item
    The standard corner~$h_a$ is a corner for~$a$ and
    the standard compression~$c_a$ is a compression for~$a$.
    \item
    Corners are surjective.  Compressions are injective.
    Restricted to self-adjoint elements,
        compressions are order-embeddings.
    \item
    Every corner~$h$ for~$a$ is of the form~$h=\vartheta \after h_a$
        for some isomorphism~$\vartheta$.
    Every compression~$c$ for~$a$ is of the form~$c=c_a \after \vartheta$
        for some isomorphism~$\vartheta$.
    \item
    Assume~$\varphi\colon \mathscr{A} \to \mathscr{B}$
        is a contractive NCP-map.
    Let~$\varphi'$ denote the unique contractive NCP-map
        such that~$c_{\varphi(1)} \after \varphi' = \varphi$
    and~$\varphi''$ the unique contractive NCP-map
        with~$\varphi'' \after h_{\car \varphi} = \varphi$.
    Then~$\varphi'$ is unital and~$\varphi''$ is faithful.
    \item
    With~$\varphi$ as above,
    there is a unique NCP-map~$\varphi_\ft\colon (\car{\varphi }) \mathscr{A}(\car\varphi)
            \to \ceil{\varphi(1)}\mathscr{B}\ceil{\varphi(1)}$
    such that~$c_{\varphi(1)}\after \varphi_\ft \after h_{\car\varphi} = \varphi$. The map~$\varphi_\ft$ is faithful and unital.
\end{enumerate}
\end{prop}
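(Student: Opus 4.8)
The plan is to prove the first claim --- that the standard corner is a corner and the standard compression is a compression --- first, since the remaining four claims follow formally from these two universal properties together with the explicit shape of $h_a$ and $c_a$. For the corner, take a contractive NCP-map $f\colon\mathscr A\to\mathscr C$ with $f(a)=f(1)$, equivalently $f(1-a)=0$, and write $p:=\ceil{1-a}=1-\floor{a}$. First I would show $f(p)=0$: putting $x=1-a$, the operator Cauchy--Schwarz inequality for the $2$-positive map $f$ gives $f(x^{1/2})^*f(x^{1/2})\le\|f(1)\|\,f(x)=0$, so $f(x^{1/2})=0$, and iterating $f(x^{1/2^n})=0$; since $0\le x\le1$ we have $x^{1/2^n}\uparrow\ceil{x}=p$, so normality of $f$ yields $f(p)=0$. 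A second application of Cauchy--Schwarz, $f(pb)^*f(pb)\le\|f(p)\|\,f(b^*b)=0$ together with its adjoint variant, gives $f(pb)=f(bp)=0$ for every $b$, whence $f(b)=f(\floor{a}\,b\,\floor{a})=f(h_a(b))$. Taking $f'$ to be the restriction of $f$ to $\floor{a}\mathscr A\floor{a}$ then yields $f=f'\after h_a$, and $f'$ is forced because $h_a$ maps onto its codomain; in particular the standard corner is surjective.

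For the compression I would dualize. Given a contractive NCP-map $g\colon\mathscr C\to\mathscr B$ with $g(1)\le a$, I want $g'$ satisfying $\sqrt{a}\,g'(c)\,\sqrt{a}=g(c)$, that is, to divide $g$ by $\sqrt{a}$ on both sides. For self-adjoint $c$ with $\|c\|\le1$ one has $-a\le-g(1)\le g(c)\le g(1)\le a$, so every value of $g$ is dominated by $a$. The crux --- and the main obstacle of the whole proof --- is the factorization lemma that $y\mapsto\sqrt{a}\,y\,\sqrt{a}$ is an injective order-embedding of $\ceil{a}\mathscr B\ceil{a}$ onto exactly the $a$-dominated self-adjoint elements of $\mathscr B$ (a Douglas-type statement which I would take from \cite{ww16}). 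Granting it, each $g(c)$ is $\sqrt{a}\,y\,\sqrt{a}$ for a unique $y\in\ceil{a}\mathscr B\ceil{a}$; setting $g'(c)=y$ makes $g'$ linear (by uniqueness), completely positive and contractive (by the same lemma on matrix amplifications), with $c_a\after g'=g$, and $g'$ is unique since $c_a$ is injective. This same lemma shows compressions are injective order-embeddings, and the standard mediating-map argument (exactly as in Lemma~\ref{lem:dilationsareisomorphic}) shows every corner is $\vartheta\after h_a$ and every compression $c_a\after\vartheta$ for an isomorphism $\vartheta$ --- giving the second and third claims.

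For the fourth claim, $\varphi'$ exists by the compression property applied to $g=\varphi$ (legitimate as $\varphi(1)\le\varphi(1)$), and is unital because $c_{\varphi(1)}(\varphi'(1))=\varphi(1)=c_{\varphi(1)}(\ceil{\varphi(1)})$ with $c_{\varphi(1)}$ injective. Dually $\varphi''$ exists by the corner property applied to $f=\varphi$ (legitimate as $\varphi(\car\varphi)=\varphi(1)$ by definition of the carrier). To see $\varphi''$ is faithful, suppose $\varphi''(c^*c)=0$ for $c\in\car\varphi\,\mathscr A\,\car\varphi$; re-running the support-projection argument above inside the corner gives $\varphi''(\ceil{c^*c})=0$, so $q:=\car\varphi-\ceil{c^*c}$ is a projection with $\varphi(q)=\varphi''(q)=\varphi''(\car\varphi)=\varphi(1)$, which forces $q\ge\car\varphi$ by minimality of the carrier, hence $\ceil{c^*c}=0$ and $c^*c=0$.

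The fifth claim combines the two factorizations. I would first record that $\car\varphi'=\car\varphi$: since $c_{\varphi(1)}$ is injective, for a projection $q$ one has $\varphi(q)=\varphi(1)$ iff $\varphi'(q)=\varphi'(1)$, so the carriers agree. Applying the corner factorization of the fourth claim to the unital map $\varphi'$ produces $\varphi_\ft:=(\varphi')''\colon\car\varphi\,\mathscr A\,\car\varphi\to\ceil{\varphi(1)}\mathscr B\ceil{\varphi(1)}$ with $\varphi_\ft\after h_{\car\varphi}=\varphi'$, hence $c_{\varphi(1)}\after\varphi_\ft\after h_{\car\varphi}=c_{\varphi(1)}\after\varphi'=\varphi$; it is faithful by the fourth claim and unital because $\varphi_\ft(\car\varphi)=\varphi'(1)=\ceil{\varphi(1)}$. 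For uniqueness, any NCP-map $\psi$ with $c_{\varphi(1)}\after\psi\after h_{\car\varphi}=\varphi$ satisfies $\psi(\car\varphi)=\ceil{\varphi(1)}$ (again by injectivity of $c_{\varphi(1)}$), so $\psi$ is unital and hence contractive; the uniqueness clauses of the compression and corner universal properties then give $\psi\after h_{\car\varphi}=\varphi'$ and finally $\psi=\varphi_\ft$. Thus the only genuinely analytic ingredient is the $\sqrt{a}$-factorization lemma behind the compression; all the rest is formal manipulation of the two universal properties.
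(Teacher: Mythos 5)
Your proposal is correct, and it is in fact more self-contained than the paper's own proof, which disposes of point 1 by citation: the paper simply invokes Propositions 5 and 6 of~\cite{ww16} for the fact that~$h_a$ is a corner and~$c_a$ a compression, whereas you re-prove the corner half directly (the iterated Kadison--Schwarz argument $f(x^{1/2^n})=0$ with $x^{1/2^n}\uparrow\ceil{x}$ and normality is indeed the right mechanism for showing~$f$ kills~$\ceil{1-a}$) and reduce the compression half to the Douglas-type division lemma, which is also essentially where~\cite{ww16} places the analytic weight. The most interesting divergence is point 2: you obtain injectivity and the order-embedding property of an arbitrary compression from the explicit form of~$c_a$ via point 3, while the paper proves it from the universal property alone --- given $c(x)\le c(y)$ for self-adjoint $x,y$ it plays the mediating-map clause against the scalar maps $p_\alpha\colon\C\to\mathscr{B}$, $p_\alpha(1)=\alpha$, to produce $f'$ with $y=x+f'(1)$ and $f'(1)\geq 0$ --- a purely formal argument that never opens up~$c$; conversely the paper, like you, gets surjectivity of corners only via point 3 and surjectivity of~$h_a$, so the mild forward reference is shared, and point 3 is in both treatments the standard mediating-map argument plus the NMIU upgrade of~\cite[Corollary 47]{ww16}. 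Points 4 and 5 match the paper in substance: your injectivity argument for unitality of~$\varphi'$ replaces the paper's appeal to the uniqueness clause against $u_{\mathscr{A}}(\lambda)=\lambda 1$, and your observation $\car\varphi'=\car\varphi$ usefully makes the existence of~$\varphi_\ft$ explicit, which the paper leaves implicit; for uniqueness, note that injectivity of~$c_{\varphi(1)}$ together with surjectivity of~$h_{\car\varphi}$ (the paper's ``by point 2'') settles it among \emph{all} NCP-maps at once, slightly more directly than your detour through showing~$\psi$ is unital, though your route is also valid. One loose end you should close: in your construction of the mediating map~$g'$ for the compression you verify complete positivity and contractivity but not normality. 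It does follow with the tools you already have --- $c_a$ is normal and, by your division lemma, reflects the order, hence $g'(D)$ is directed and bounded for bounded directed~$D$, and
\begin{equation*}
c_a\bigl(g'(\sup D)\bigr)\;=\;g(\sup D)\;=\;\sup_{d\in D} c_a\bigl(g'(d)\bigr)\;=\;c_a\bigl(\sup_{d\in D} g'(d)\bigr),
\end{equation*}
so injectivity of~$c_a$ on self-adjoint elements forces $g'(\sup D)=\sup_{d\in D}g'(d)$ --- but it needs saying, since the universal property quantifies over \emph{normal} CP-maps.
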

\begin{proof}[Proof of Proposition~\ref{prop:ww16}]
The proposition is true in greater generality.\cite{effectus}
We give a direct proof.
\begin{enumerate}
\item
Proven in~\cite[Prop.~5 \& 6]{ww16}.
\item
Let~$c\colon \mathscr{A} \to \mathscr{B}$ be any compression.
Assume~$a,a' \in \mathscr{A}$
are self-adjoint elements with~$c(a) \leq c(a')$.
Without loss of generality,
we may assume~$a,a' \geq 0$
as~$\|a\| + a, \|a\|+a' \geq 0$.
Define~$p_\alpha\colon \mathscr{C} \to \mathscr{B}$
by~$p_\alpha(1) = \alpha$ for~$\alpha \in \mathscr{A}$.
Note~$0 \leq c(a'-a) \leq c(1)$
and so by the universal property of~$c$,
there is a unique map~$f'\colon \C \to \mathscr{A}$
with~$c \after f' = p_{c(a'-a)}$.
We compute
\begin{equation*}
c(p_{\frac{1}{2}a'}(1)) 
=\frac{1}{2}c(a') = \frac{c(a) + p_{c(a'-a)}(1)}{2}
            = \frac{c(a) + c(f'(1))}{2} = c(p_{\frac{1}{2}(a+f'(1))}(1))
\end{equation*}
and so by the universal property of~$c$,
we have~$p_{\frac{1}{2}a'} = p_{\frac{1}{2}(a+f'(1))}$
and so~$a \leq a'$, as desired.
As a corollary, $c$ is injective on self-adjoint elements.
It follows~$c$ is injective.  (See e.g.~\cite[Proof Lemma 4.2]{cho}).
Clearly isomorphisms and a standard corner are surjective.
Thus by point 3, every corner is surjective.

\item
A standard argument gives
us that there are mediating NCP-isomorphisms.
They are actual NMIU-isomorphisms
by e.g.~\cite[Corollary 47]{ww16}.

\item
Write~$u_\mathscr{A}\colon \C \to \mathscr{A}$
for the NCP-map $u_{\mathscr{A}}(\lambda) = \lambda\cdot 1$.
Then~$\varphi \after u_{\mathscr{A}}
    = c_{\varphi(1)} \after \varphi' \after u_{\mathscr{A}}
            = c_{\varphi(1)} \after u_{\ceil{\varphi(1)} \mathscr{B} \ceil{\varphi(1)}}$
and so by the universal property of~$c_{\varphi(1)}$,
we get~$\varphi' \after u_\mathscr{A} = u_{\ceil{\varphi(1)} \mathscr{B} \ceil{\varphi(1)}}$
and so~$1=\varphi'(1)$, as desired.
Write~$p \equiv \car \varphi$.
Now, to show~$\varphi''$ is faithful,
assume~$\varphi''(pap) =0$ for some~$pap \in [0,1]_{p\mathscr{A}p}$.
Then~$0=\varphi''(pap) = \varphi''(h_p(pap)) = \varphi(pap)$
and so~$pap \leq 1 - \car \varphi = 1-p$.
Hence~$pap = 0$, as desired.
\item
By point 2, we know $\varphi_\ft$ is unique.
Note~$c_{\varphi(1)} \after \varphi_\ft (1) = \varphi(1)$
and~$\car (\varphi_\ft \after h_{\car \varphi}) = \car \varphi$
and so $\varphi_\ft$ is unital and faithful
by the previous point. \qedhere
\end{enumerate}
\end{proof}
\noindent
In the previous Definition and Proposition we chose
to restrict ourselves to contractive maps
as a `non-contractive compression' might sound confusing.
For a non-contractive~$\varphi$,
define~$\varphi_\ft := (\frac{1}{\|\varphi\|}\cdot \varphi)_\ft$.
\begin{dfn}\label{dfn:pure}
    A NCP-map~$\varphi\colon \mathscr{A} \to \mathscr{B}$
        is said to be \keyword{pure}
        whenever~$\varphi_\ft$ is an isomorphism.
\end{dfn}
\begin{exa}
    The pure NCP-maps between~$B(\mathscr{H})$ and $B(\mathscr{K})$
        are exactly those of the form~$\Ad_V$
        for some~$V\colon \mathscr{K} \to \mathscr{H}$.
    The contractive pure maps are exactly those with~$V^*V\leq 1$.
\end{exa}
\begin{exa}
    If~$\varphi \colon \mathscr{A} \to \mathscr{B}$
        is unital and faithful,
    then~$\varphi_\ft = \varphi$.
    Thus a state is pure if and only if it is faithful.
    Also isomorphisms are pure.
\end{exa}
\begin{prop}\label{prop:compclosed}
    Corners, compressions and pure maps are closed under composition.
\end{prop}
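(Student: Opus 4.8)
The plan is to handle the three classes in turn, using the structure theorem Proposition~\ref{prop:ww16} throughout, and then to reduce the assertion about pure maps to the ones about corners and compressions by means of an exchange law. First, for compressions, let $c_1\colon\mathscr{B}\to\mathscr{A}$ be a compression for $a\in[0,1]_{\mathscr{A}}$ and $c_2\colon\mathscr{C}\to\mathscr{B}$ a compression for $b\in[0,1]_{\mathscr{B}}$. I claim $c_1\after c_2$ is a compression for $e:=c_1(b)$, which lies in $[0,1]_{\mathscr{A}}$ since $0\le c_1(b)\le c_1(1)=a\le 1$. I verify the universal property directly: given a contractive NCP-map $g$ with $g(1)\le e\le a$, the universal property of $c_1$ produces a unique $g_1$ with $g=c_1\after g_1$; from $c_1(g_1(1))=g(1)\le c_1(b)$ and the fact that compressions are order-embeddings on self-adjoint elements (Proposition~\ref{prop:ww16}, part~2) I conclude $g_1(1)\le b$, so the universal property of $c_2$ gives a unique $g'$ with $g_1=c_2\after g'$, whence $g=c_1\after c_2\after g'$. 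Uniqueness of $g'$ is immediate because $c_1\after c_2$ is injective, being a composite of injectives (same part).

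Next, for corners, let $h_1\colon\mathscr{A}\to\mathscr{B}$ be a corner for $a$ and $h_2\colon\mathscr{B}\to\mathscr{C}$ a corner for $b\in[0,1]_{\mathscr{B}}$. Writing $h_1=\vartheta\after h_{a}$ with $\vartheta$ an isomorphism (Proposition~\ref{prop:ww16}, part~3) and noting $1_{\mathscr{B}}=\vartheta(\floor{a})$, I set $e:=\vartheta^{-1}(b)$, so that $e\le\floor{a}\le a$ and $h_1(e)=b$; I claim $h_2\after h_1$ is a corner for $e$. The identity $(h_2\after h_1)(e)=h_2(b)=h_2(1)=(h_2\after h_1)(1)$ is clear. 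For the universal property, given a contractive $f$ with $f(e)=f(1)$, positivity together with $e\le a$ forces $f(a)=f(1)$, so $f=f_1\after h_1$ for a unique $f_1$; then $f_1(b)=f_1(h_1(e))=f(e)=f(1)=f_1(1)$, so $f_1=f_2\after h_2$ for a unique $f_2$, giving $f=f_2\after h_2\after h_1$. Uniqueness of $f_2$ follows since $h_2\after h_1$ is surjective, again a composite of surjectives (Proposition~\ref{prop:ww16}, part~2).

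For pure maps, purity is scale-invariant (as $(\lambda\varphi)_\ft=\varphi_\ft$ for $\lambda>0$), so I may assume everything contractive. For a contractive pure $\varphi$, Definition~\ref{dfn:pure} and Proposition~\ref{prop:ww16}, part~5, give $\varphi=c_{\varphi(1)}\after\varphi_\ft\after h_{\car\varphi}$ with $\varphi_\ft$ an isomorphism; using the easy converses to part~3 (an isomorphism after a standard corner is a corner, and a standard compression after an isomorphism is a compression), this exhibits every contractive pure map as a compression after a corner. Writing $\varphi=c_{\varphi(1)}\after h_1$ and $\psi=c_2\after h_{\car\psi}$ with $c_{\varphi(1)}$ and $h_{\car\psi}$ standard, I obtain $\psi\after\varphi=c_2\after(h_{\car\psi}\after c_{\varphi(1)})\after h_1$. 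The crux is the middle composite $h_{\car\psi}\after c_{\varphi(1)}$, a standard corner after a standard compression, which equals $y\mapsto V^*yV$ for $V=\sqrt{\varphi(1)}\,\car\psi\in\mathscr{B}$. Taking the polar decomposition $V=u|V|$ in $\mathscr{B}$, with $|V|=(\car\psi\,\varphi(1)\,\car\psi)^{1/2}$, I factor $y\mapsto V^*yV=|V|\,(u^*yu)\,|V|$ as $c_{e}\after(\Ad_u\after h_{uu^*})$, where $e=\car\psi\,\varphi(1)\,\car\psi$, the map $\Ad_u$ is the isomorphism between the corners at $uu^*$ and $u^*u$ implemented by the partial isometry $u$, and $c_e$ is the standard compression for $e$; thus this composite is itself a compression after a corner.

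Regrouping and invoking the closure of compressions and of corners under composition just established, $\psi\after\varphi$ is a compression $C$ after a corner $H$. It remains to see such a $C\after H$ is pure: setting $a_0=(C\after H)(1)=C(1)$ and checking $\car(C\after H)=q$ (the projection for which $H$ is a corner, using injectivity of $C$), the structure theorem rewrites $C\after H=c_{a_0}\after(\vartheta_C\after\vartheta_H)\after h_q$, and by the uniqueness in Proposition~\ref{prop:ww16}, part~5, the middle map equals $(C\after H)_\ft$ and is hence an isomorphism, so $\psi\after\varphi$ is pure. I expect the main obstacle to be precisely the polar-decomposition exchange step, where one must recognise the two factors of $V^*(\,\cdot\,)V$ as an honest standard compression and an isomorphism-after-standard-corner in the abstract von Neumann-algebraic setting; the remaining steps are bookkeeping with the universal properties and the structure theorem.
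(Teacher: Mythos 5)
Your proof is correct and follows essentially the same route as the paper: the same universal-property chases give closure of corners and compressions (your choice of $e=\vartheta^{-1}(b)$ matches the paper's $c_{\floor{a}}(\vartheta^{-1}(\floor{b}))$), and your polar-decomposition exchange step, factoring $h_{\car\psi}\after c_{\varphi(1)}$ as $c_{pap}\after \Ad_u\after h_{\ceil{\sqrt{a}p\sqrt{a}}}$ with $u$ obtained from $\sqrt{a}\,p=u\sqrt{pap}$, is exactly the paper's central diagram. The only difference is presentational: you spell out the regrouping (every contractive pure map is a compression after a corner, and conversely a compression after a corner is pure via uniqueness of $(\,\cdot\,)_\ft$) that the paper leaves implicit in its reduction to showing $h_p\after c_a$ is pure.
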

\begin{proof}
    First we show corners are closed under composition.
    Let~$h\colon \mathscr{A} \to \mathscr{B}$ be a corner for~$a$
        and~$h' \colon \mathscr{B} \to \mathscr{C}$ be a corner for~$b$.
    There is a unique isomorphism~$\vartheta
            \colon \floor{a} \mathscr{A} \floor{a} \to \mathscr{B}$
    such that~$h = \vartheta \after h_{\floor{a}}$.
    Note~$h_{\floor{a}} \after c_{\floor{a}} = \id$
        and so~$h \after c_{\floor{a}} \after \vartheta^{-1} = \id$.
    We will show~$h' \after h$ is a corner
        for~$c_{\floor{a}} (\vartheta^{-1}(\floor{b}))$.
    To this end, assume~$g\colon \mathscr{A} \to \mathscr{D}$
        is any contractive NCP-map
        for which it
        holds~$g(c_{\floor{a}} (\vartheta^{-1}(\floor{b}))) =g(1)$.
    Clearly~$g(1) =g(c_{\floor{a}} (\vartheta^{-1}(\floor{b}))) =g(1)
                \leq g(\floor{a}) \leq g(1)$
    and so by the universal property of~$h$,
        there is a unique contractive
    NCP-map~$g'\colon \mathscr{B} \to \mathscr{D}$
        such that~$g' \after h = g$.
    Now
\begin{equation*}
g'(\floor{b})
            = g'(h(c_{\floor{a}}(\vartheta^{-1}(\floor{b}))))
            = g(c_{\floor{a}}(\vartheta^{-1}(\floor{b})))
            = g(1) = g'(h(1)) = g'(1)
\end{equation*}
and so by the universal
    property of~$h'$ there is a unique contractive
        NCP-map~$g''\colon \mathscr{C} \to \mathscr{D}$
        with~$g'' \after h' = g'$.
    Clearly~$g'' \after h' \after h = g$.
    It is easy to see~$g''$ is unique.

We continue with compressions.
Assume~$c\colon \mathscr{C} \to \mathscr{B}$
is a compression for~$b$
and~$c'\colon \mathscr{B} \to \mathscr{A}$
is a compression for~$a$.
We will show~$c' \after c$ is a compression for~$c'(b)$.
To this end, let~$g\colon \mathscr{D} \to \mathscr{A}$
be any contractive NCP-map such that~$g(1) \leq c'(b)$.
As~$g(1) \leq c'(b) \leq a$,
there is a unique~$g'\colon \mathscr{D} \to \mathscr{B}$
        with~$c' \after g' = g$.
Clearly~$c'(g'(1)) =  g(1) \leq c'(b)$.
Thus~$g'(1) \leq b$
and so there is a unique contractive
NCP-map~$g''\colon \mathscr{D} \to \mathscr{C}$
such that~$c \after g'' = g'$.
Now~$c' \after c \after g'' = g$
It is easy to see~$g''$ is unique.

\parpic[r]{$\xymatrix@C-1.8pc@R-1pc{
& \mathscr{A} \ar[rd]^{h_p}&\\
{\ceil{a}\mathscr{A}\ceil{a}} \ar[ru]^{c_a} 
    \ar[d]_{h_{\ceil{\sqrt{a}p\sqrt{a}}}}&
& p \mathscr{A} p\\
\ceil{\sqrt{a} p \sqrt{a}}
\mathscr{A}
\ceil{\sqrt{a} p \sqrt{a}}
\ar[rr]_{\Ad_u}
&&
\ceil{pap} \mathscr{A} \ceil{pap}
\ar[u]_{c_{pap}}
}$}

To show pure maps are closed under composition,
it is sufficient to show that~$h_p \after c_a$
is pure for
any von Neumann algebra~$\mathscr{A}$,
projection~$p \in \mathscr{A}$ and effect~$a \in [0,1]_{\mathscr{A}}$.
To this end, we will define a~$u$
such that the diagram on the right makes sense, commutes
and~$\Ad_u$ is an isomorphism.
First some facts.

\begin{enumerate}
\item
By polar decomposition (see e.g.~\cite[p.15]{topping}), there is
a partial isometry~$u \in \mathscr{A}$ such
that~$\sqrt{a}p = u\sqrt{pap}$ with domain~$u^*u=\ceil{pap}$
and range~$uu^*=r(\sqrt{a}p)=r(\sqrt{a}p(\sqrt{a}p)^*)=\ceil{\sqrt{a}p\sqrt{a}}$,
where with~$r(b)$ we denote the projection
onto the closed range of~$b$.
Note that adjoining by~$u$
restricts to an
isomorphism~$\Ad_u\colon
\ceil{\sqrt{a} p \sqrt{a}} \mathscr{A} \ceil{\sqrt{a} p \sqrt{a}}
\to
\ceil{pap} \mathscr{A} \ceil{pap}$.

\item
We have~$\sqrt{a} p \sqrt{a} \leq a \leq \ceil{a}$
and so~$\sqrt{a} p \sqrt{a} \in \ceil{a}\mathscr{A}\ceil{a}$.
Clearly~$pap \in p\mathscr{A}p$.
Hence
\begin{align*}
\ceil{\sqrt{a} p \sqrt{a}} (\ceil{a} \mathscr{A} \ceil{a})\ceil{\sqrt{a} p \sqrt{a}}
& = \ceil{\sqrt{a} p \sqrt{a}} \mathscr{A} \ceil{\sqrt{a} p \sqrt{a}} \\
\ceil{pap} (p\mathscr{A} p)\ceil{pap} & =
\ceil{pap} \mathscr{A} \ceil{pap}.
\end{align*}

\item
We have~$u r(u)u = r(uu^*)u = uu^*u = \ceil{\sqrt{a}p\sqrt{a}}u$.
\end{enumerate}
Now we see the diagram makes sense and commutes:
\begin{align*}
h_p(c_a(\ceil{a}x\ceil{a}))
    & = p \sqrt{a} \ceil{a} x \ceil{a} \sqrt{a} p\\
    & = \sqrt{pap} u^* \ceil{a} x \ceil{a} u \sqrt{pap} \\
    & = \sqrt{pap} u^* \ceil{\sqrt{a}p\sqrt{a}}\ceil{a} x
            \ceil{a}\ceil{\sqrt{a}p\sqrt{a}} u \sqrt{pap} \\
    & = c_{pap}(\Ad_u (h_{\ceil{\sqrt{a}p\sqrt{a}}} (\ceil{a}x\ceil{a}))).
\end{align*}
Consequently~$h_p \after c_a$ is pure.
\end{proof}
\begin{rem}
    On the category of von Neumann algebra with pure maps,
        one may define a dagger which turns it into a dagger category.
    It is not directly clear it is unique,
        but with additional assumptions is can be shown to be unique.
    This is beyond the scope of this paper and will appear elsewhere.
\end{rem}

\begin{prop}\label{prop:paschkecompression}
    Let~$\varphi\colon \mathscr{A} \to \mathscr{B}'$
        be an NCP-map
        with Paschke dilation
            $\xymatrix{\mathscr{A}
            \ar[r]|-{\varrho} & \mathscr{P}
            \ar[r]|-{f} & {\mathscr{B}'}}$.
    Let~$b \in [0,1]_{\mathscr{B}}$
        together with a compression~$c\colon \mathscr{B}' \to \mathscr{B}$
            for~$b$.
    Then~$\xymatrix{\mathscr{A}
            \ar[r]|-{\varrho} & \mathscr{P}
            \ar[r]|-{c\after f} & \mathscr{B}}$
        is a Paschke dilation of~$c \after \varphi$.
\end{prop}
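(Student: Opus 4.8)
The plan is to verify the universal property of a Paschke dilation for the pair $(\varrho, c\after f)$ by reducing it to the universal property of the original dilation $(\varrho,f)$ together with that of the compression~$c$. The routine part comes first: $c\after f$ is NCP as a composite of NCP-maps, $\varrho$ is still the same NMIU-map, and $(c\after f)\after\varrho = c\after(f\after\varrho) = c\after\varphi$, so we genuinely have a factorization of $c\after\varphi$ through the von Neumann algebra~$\mathscr{P}$.

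Now suppose we are given a competing factorization consisting of a von Neumann algebra $\mathscr{P}'$, an NMIU-map $\varrho'\colon\mathscr{A}\to\mathscr{P}'$ and an NCP-map $f'\colon\mathscr{P}'\to\mathscr{B}$ with $f'\after\varrho' = c\after\varphi$. The crucial step is to lift $f'$ through the compression~$c$. Since $f'(1) = f'(\varrho'(1)) = c(\varphi(1)) \leq c(1) = b$ (using $\varphi(1)\leq 1$; the non-contractive case reduces to the contractive one by rescaling, exactly as in the earlier example that a Paschke dilation of $\lambda\varphi$ is obtained by scaling~$f$), the map $f'$ meets the hypothesis of the universal property of~$c$ in Definition~\ref{dfn:ww16}, so there is a unique NCP-map $\tilde f\colon\mathscr{P}'\to\mathscr{B}'$ with $c\after\tilde f = f'$. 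Because compressions are injective (Proposition~\ref{prop:ww16}), the identity $c\after(\tilde f\after\varrho') = f'\after\varrho' = c\after\varphi = c\after(\varphi)$ forces $\tilde f\after\varrho' = \varphi$; hence $(\varrho',\tilde f)$ is a bona fide competitor for the original Paschke dilation of~$\varphi$.

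Applying the universal property of $(\varrho,f)$ to $(\varrho',\tilde f)$ produces a unique NCP-map $\sigma\colon\mathscr{P}'\to\mathscr{P}$ with $\sigma\after\varrho' = \varrho$ and $f\after\sigma = \tilde f$. Then $(c\after f)\after\sigma = c\after\tilde f = f'$, so this same $\sigma$ mediates for the new dilation, giving existence. For uniqueness, suppose $\sigma_1,\sigma_2$ both satisfy $\sigma_k\after\varrho' = \varrho$ and $(c\after f)\after\sigma_k = f'$. Then $c\after(f\after\sigma_1) = f' = c\after(f\after\sigma_2)$, so injectivity of~$c$ yields $f\after\sigma_1 = f\after\sigma_2$; since both maps also satisfy $\sigma_k\after\varrho' = \varrho$ and $(f\after\sigma_k)\after\varrho' = f\after\varrho = \varphi$, the uniqueness clause in the original Paschke dilation forces $\sigma_1 = \sigma_2$.

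I expect the only real obstacle to be the lifting step, that is, checking the hypothesis $f'(1)\leq b$ required by the compression's universal property. This rests on the monotonicity of~$c$ applied to $\varphi(1)\leq 1$, and the non-contractive case must then be dispatched by the rescaling argument. Everything else is bookkeeping with the injectivity of~$c$ and the two universal properties already established.
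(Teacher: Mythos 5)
Your proposal is correct and follows essentially the same route as the paper's own proof: lift $f'$ through the compression~$c$ via $f'(1) = c(\varphi(1)) \leq c(1) = b$, use injectivity of~$c$ to see the lift factors~$\varphi$, then invoke the universal property of the original dilation, with injectivity of~$c$ again giving uniqueness. Your explicit attention to the contractivity hypothesis (and the rescaling needed for non-contractive~$\varphi$) is a point the paper glosses over, but it changes nothing structurally.
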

\begin{proof}
    Assume~$\mathscr{P}'$ is any von Neumann algebra
        together with NMIU-map~$\varrho' \colon \mathscr{A} \to \mathscr{P}'$
        and~NCP-map~$f'\colon \mathscr{P}' \to \mathscr{B}$
            such that~$f' \after \varrho' = c \after \varphi$.
    Note~$f'(1) = f'(\varrho'(1)) = c (\varphi(1)) \leq c(1) \leq b$.
    Hence there is a unique
    NCP-map~$f''\colon \mathscr{P}'\to \mathscr{B}'$
        with~$c \after f'' = f'$.
    Observe~$c \after f'' \after \varrho' = f' \after \varrho'
            = c \after \varphi$
        and so~$f'' \after \varrho' = \varphi$
            as~$c$ is injective.
    There is a unique~$\sigma \colon \mathscr{P}' \to \mathscr{P}$
        with~$\sigma \after \varrho' = \varrho$
            and~$f \after \sigma = f''$.
        But then~$c \after f \after \sigma
                        = c \after f'' = f'$
            and so we have shown existence of a mediating map.
    To show uniqueness, assume~$\sigma'\colon \mathscr{P}' \to \mathscr{P}$
        is any NCP-map such that~$c \after f \after \sigma' = f'$
            and~$\sigma\after \varrho' = \varrho$.
        Clearly~$c \after f \after \sigma' = f' = c \after f''$
                and so~$f \after \sigma' = f''$.
            Thus~$\sigma=\sigma'$ by definition of~$\sigma$.
\end{proof}

\begin{cor}\label{cor:fispure}
    Let~$\varphi \colon \mathscr{A} \to \mathscr{B}$
        be an NCP-map
        with Paschke dilation
            $\xymatrix{\mathscr{A}
            \ar[r]|-{\varrho} & \mathscr{P}
            \ar[r]|-{f} & \mathscr{B}}$.
    Then~$f$ is pure.
        Furthermore, if~$\varphi$ is contractive, then so is~$f$
            and if~$\varphi$ is unital, then~$f$ is a corner.
\end{cor}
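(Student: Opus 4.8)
The plan is to establish the three assertions in order, the purity of $f$ being the one with real content.

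First, the easy parts. As $\varrho$ is unital, $f(1)=f(\varrho(1))=\varphi(1)$; write $a:=\varphi(1)$. Hence if $\varphi$ is contractive then $f(1)=a\leq 1$ and $f$ is contractive, while if $\varphi$ is unital then $f(1)=1$ and $f$ is unital, in which case purity will immediately upgrade to the statement that $f$ is a corner.

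To prove $f$ is pure I would compute in the concrete model. By Example~\ref{exa:pure}, $(\id,f)$ is a Paschke dilation of $f$, and by Theorem~\ref{thm:paschke} so is $\mathscr{P}\to\A(\mathscr{P}\otimes_f\mathscr{B})\to\mathscr{B}$ with right leg $f''=\langle e,(\,\cdot\,)e\rangle$ for $e=1\otimes 1$. Lemma~\ref{lem:dilationsareisomorphic} then yields an NMIU-isomorphism $\vartheta$ with $f''\after\vartheta=f$; since isomorphisms are pure and pure maps are closed under composition (Proposition~\ref{prop:compclosed}), it suffices to show $f''$ is pure (and, in the unital case, a corner). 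Writing $V\colon\mathscr{B}\to\mathscr{P}\otimes_f\mathscr{B}$ for the module map $Vb=eb$, one has $V^*TV=\langle e,Te\rangle=f''(T)$ and $V^*V=\langle e,e\rangle=a$, so $f''=\Ad_V$.

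Next I would take a polar decomposition $V=U\sqrt a$ in the self-dual module setting, with $\sqrt a\in\mathscr{B}$ and $U$ a partial isometry (module map) satisfying $U^*U=\ceil{a}$ and range projection $Q:=UU^*\in\A(\mathscr{P}\otimes_f\mathscr{B})$. A direct check gives $\car f''=Q$ (the least projection $P$ with $PV=V$) and $\ceil{f''(1)}=\ceil{a}$. Unfolding the defining identity $c_{f''(1)}\after(f'')_\ft\after h_{\car f''}=f''$ then shows that $(f'')_\ft\colon Q\,\A(\mathscr{P}\otimes_f\mathscr{B})\,Q\to\ceil{a}\,\mathscr{B}\,\ceil{a}$ is given by $S\mapsto U^*SU$, with two-sided inverse $x\mapsto UxU^*$ (using $QU=U$ and $U^*U=\ceil{a}$). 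Hence $(f'')_\ft$ is an isomorphism, so $f''$ — and therefore $f$ — is pure. When $\varphi$ is unital, $a=1$, so $\ceil{f''(1)}=1$ and $c_{f''(1)}=\id$, whence $f''=(f'')_\ft\after h_{Q}$ exhibits $f''$ as an isomorphism after a standard corner, i.e.\ a corner; transporting along $\vartheta$, the map $f$ is a corner.

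The main obstacle is the module polar decomposition together with the carrier computation: one must justify that a bounded module map between self-dual Hilbert $\mathscr{B}$-modules factors as $V=U\sqrt{V^*V}$ with $U$ adjointable and $U^*U,\,UU^*$ projections, and — the crux — that $\car f''$ is exactly the range projection $Q=UU^*$, so that $(f'')_\ft$ is visibly the isomorphism $S\mapsto U^*SU$. The remaining steps are routine bookkeeping with Proposition~\ref{prop:compclosed} and the invariance of purity and of being a corner under composition with the isomorphism $\vartheta$.
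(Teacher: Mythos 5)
Your proposal is correct, but it takes a genuinely different route from the paper's. The paper reduces everything to the unital case: it writes $\varphi = c_{\varphi(1)}\after\varphi'$ with $\varphi'$ unital, invokes Proposition~\ref{prop:paschkecompression} to see that post-composing the dilation of $\varphi'$ with the compression $c_{\varphi(1)}$ yields a dilation of $\varphi$, and for unital maps it simply cites Paschke's own result \cite[Cor.~5.3]{bew154} that the right leg $f_s$ of the standard dilation is a corner; purity of $f$ then follows from Lemma~\ref{lem:dilationsareisomorphic} together with closure of pure maps under composition (Proposition~\ref{prop:compclosed}), since compressions, corners and isomorphisms are all pure. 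You instead pass to the standard dilation of $f$ itself (via Example~\ref{exa:pure} and Lemma~\ref{lem:dilationsareisomorphic}), exhibit its right leg as $f''=\Ad_V$ for the module map $Vb=eb$, and compute $(f'')_\ft$ explicitly as $S\mapsto U^*SU$ via polar decomposition --- in effect re-proving \cite[Cor.~5.3]{bew154} rather than citing it, and handling the contractive and unital cases uniformly without Proposition~\ref{prop:paschkecompression}. The step you flag as the main obstacle is indeed available: embed $V$ as an off-diagonal element of the von Neumann algebra $\A\bigl(\mathscr{B}\oplus(\mathscr{P}\otimes_f\mathscr{B})\bigr)$ (a finite direct sum of self-dual modules is again self-dual, so Overview~\ref{ov:hmod} applies) and take the polar decomposition there; since $V$ is supported in one corner, the partial isometry $U$ is again a module map $\mathscr{B}\to\mathscr{P}\otimes_f\mathscr{B}$ with $U^*U=\ceil{a}$ and $UU^*=Q$ (where $a=\varphi(1)$), and your identification $\car f''=Q$ is right because $V^*(1-P)V=0$ forces $(1-P)V=0$ by the $C^*$-identity in that algebra. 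What each approach buys: the paper's argument is shorter and exercises Proposition~\ref{prop:paschkecompression}, which it needs again later (e.g.\ in Corollary~\ref{cor:purequiv}); yours is self-contained, makes the purity of $f$ concretely spatial, and realizes at the module level exactly the Hilbert-space picture ($\Ad_V$ pure via polar decomposition) that motivates Definition~\ref{dfn:pure}.
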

\begin{proof}
    First, we will prove that if~$\varphi$ is unital,
        then~$f$ is a corner.
        Thus, assume~$\varphi$ is unital.
    Let $\xymatrix{\mathscr{A}
            \ar[r]|-{\varrho_s} & \mathscr{P}_s
            \ar[r]|-{f_s} & \mathscr{B}}$
    be the standard Paschke dilation of~$\varphi$.
    By \cite[Corollary 5.3]{bew154}, $f_s$ is a corner.
    By Lemma~\ref{lem:dilationsareisomorphic},
        we have~$f = f_s \after \vartheta$
        for some isomorphism~$\vartheta$.
    But then~$f$ is also a corner.

    Now, we will prove that for arbitrary contractive~$\varphi$,
        the map~$f$ is pure and contractive.
    The non-contractive case follows by scaling.
    Write~$\varphi'\colon \mathscr{A} \to
            \ceil{\varphi(1)}\mathscr{B}\ceil{\varphi(1)}$
        for the unique unital
    NCP-map such that~$\varphi = c_{\varphi(1)}\after \varphi'$.
    Let $\xymatrix{\mathscr{A}
            \ar[r]|-{\varrho'} & \mathscr{P}'
            \ar[r]|-{f'} & {\ceil{\varphi(1)}\mathscr{B}\ceil{\varphi(1)}}}$
        denote the Paschke dilation of~$\varphi'$.
    By Proposition~\ref{prop:paschkecompression},
        $\xymatrix{\mathscr{A}
            \ar[r]|-{\varrho'} & \mathscr{P}'
            \ar[rr]|-{c_{\varphi(1)}\after f'} && \mathscr{B}}$
    is a Paschke dilation of~$\varphi$.
        Thus by Lemma~\ref{lem:dilationsareisomorphic},
            we know~$f= c_{\varphi(1)} \after f' \after \vartheta$
                for some isomorphism~$\vartheta$.
        As these are all pure, $f$ is pure as well.
\end{proof}

\begin{thm}\label{thm:paschkecorner}
    Let~$\mathscr{A}$ be a von Neumann algebra together with
        a projection~$p \in \mathscr{A}$.
    Then a Paschke dilation of the standard corner~$h_p$
    is given by
    $\xymatrix{\mathscr{A}
    \ar[r]|-{h_{C_p}} & C_p\mathscr{A}
    \ar[r]|-{h_p} & p\mathscr{A}p}$.
\end{thm}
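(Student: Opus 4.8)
The plan is to compute the standard Paschke dilation $\mathscr{A}\otimes_{h_p}(p\mathscr{A}p)$ of Theorem~\ref{thm:paschke} explicitly and then exhibit an isomorphism of the dilating algebra carrying it to the pair in the statement. Write $\mathscr{B}=p\mathscr{A}p$ and $\varphi=h_p$, so $\varphi(a)=pap$. First I would simplify the $\mathscr{B}$-valued semi-inner product on $\mathscr{A}\odot\mathscr{B}$. Since every $b\in\mathscr{B}$ satisfies $pb=bp=b$, one computes, for $a_i,\alpha_j\in\mathscr{A}$ and $b_i,\beta_j\in\mathscr{B}$,
\begin{equation*}
\bigl[\ \sum_i a_i\otimes b_i,\ \sum_j\alpha_j\otimes\beta_j\ \bigr]
\ =\ \sum_{i,j} b_i^*\,p\,a_i^*\alpha_j\,p\,\beta_j
\ =\ \Bigl(\sum_i a_i b_i\Bigr)^{\!*}\Bigl(\sum_j \alpha_j\beta_j\Bigr).
\end{equation*}
Thus, writing $m\colon\mathscr{A}\odot\mathscr{B}\to\mathscr{A}p$ for the multiplication map $a\otimes b\mapsto ab$, the semi-inner product equals $[\xi,\eta]=m(\xi)^*m(\eta)$; its null space is exactly $\ker m$, and $m$ maps onto $\mathscr{A}p$ (as $m(a\otimes p)=ap$).

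Consequently the pre-Hilbert module $X_0=(\mathscr{A}\odot\mathscr{B})/N$ is isomorphic, via $m$, to $\mathscr{A}p$ equipped with the inner product $\left<v,w\right>=v^*w$ and the evident right $\mathscr{B}$-action; under this isomorphism $a\otimes b\mapsto ab$ and the distinguished vector $1\otimes 1$ corresponds to $p$. Since $\mathscr{A}\otimes_\varphi\mathscr{B}$ is the self-dual completion $X_0'$ and bounded module maps extend uniquely to it (Overview~\ref{ov:hmod}\eqref{hmod:extension-to-dual}), identifying $\A(\mathscr{A}\otimes_\varphi\mathscr{B})$ amounts to identifying the bounded right-$\mathscr{B}$-module endomorphisms of $\mathscr{A}p$. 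I claim these are exactly the left multiplications $L_x$ with $x\in C_p\mathscr{A}$, and that $x\mapsto L_x$ is an isomorphism $\vartheta^{-1}\colon C_p\mathscr{A}\to\A(\mathscr{A}\otimes_\varphi\mathscr{B})$.

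Granting this, the verification is immediate. The NMIU-map $\varrho$ of Theorem~\ref{thm:paschke}\eqref{paschke-2}, determined by $\varrho(a_0)(a\otimes b)=(a_0a)\otimes b$, corresponds to $a_0\mapsto L_{a_0}=L_{C_pa_0}$, that is, to $h_{C_p}$; and the NCP-map $f$ of \eqref{paschke-3}, $f(T)=\left<1\otimes 1,T(1\otimes 1)\right>$, sends $L_x$ to $\left<p,xp\right>=pxp=h_p(x)$. Hence $\vartheta\after\varrho=h_{C_p}$ and $h_p\after\vartheta=f$, so $\vartheta$ carries the standard Paschke dilation to the pair in the statement; as an isomorphism of the dilating algebra preserves the property of being a Paschke dilation (cf.\ the reasoning around Lemma~\ref{lem:dilationsareisomorphic}), this proves the theorem.

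The one nonroutine step is the claimed identification $\A(\mathscr{A}\otimes_\varphi\mathscr{B})\cong C_p\mathscr{A}$. That each $L_x$ (with $x\in\mathscr{A}$) is an adjointable module map with $L_x^*=L_{x^*}$ is clear, and $x\mapsto L_x$ is a normal $*$-homomorphism whose kernel is $\{x\colon x\,\mathscr{A}p=0\}=(1-C_p)\mathscr{A}$, so it restricts to an injective normal $*$-homomorphism $C_p\mathscr{A}\to\A(\mathscr{A}p)$. The real content is surjectivity: every adjointable $\mathscr{B}$-module endomorphism of $\mathscr{A}p$ is such a left multiplication. This is precisely the self-duality of $\mathscr{A}p$ together with the commutation theorem for the reduced von Neumann algebra $p\mathscr{A}p$ acting on $\mathscr{A}p$; I expect this to be the only place requiring genuine von Neumann-algebraic input rather than formal manipulation, and I would either cite it from Paschke's development~\cite{bew154} or prove it by faithfully representing $\mathscr{A}$ on a Hilbert space and applying the commutation theorem to the central carrier $C_p$.
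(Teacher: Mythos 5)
Your skeleton coincides with the paper's: identify $X_0=(\mathscr{A}\odot p\mathscr{A}p)/N$ with $\mathscr{A}p$, identify the dilating algebra with $C_p\mathscr{A}$ acting by left multiplication, and transport the dilation along the resulting isomorphism. Your computation $[\xi,\eta]=m(\xi)^*m(\eta)$ via the multiplication map is correct and in fact slicker than the paper's normal-form argument (which shows each element of $\mathscr{A}\odot p\mathscr{A}p$ is $N$-equivalent to a unique $ap\otimes p$), and the soft verifications --- $\varrho$ corresponding to $h_{C_p}$, $f(L_x)=\langle p,xp\rangle=h_p(x)$, kernel of $x\mapsto L_x$ equal to $(1-C_p)\mathscr{A}$ --- all check out.

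The gap sits exactly at the step you flag as ``nonroutine,'' and your first way of discharging it would fail: the claim is \emph{not} citable from~\cite{bew154}, which treats abstract modules and contains nothing about the corner module $\mathscr{A}p$; proving it is the bulk of the paper's proof. Concretely, the paper establishes both self-duality of $\mathscr{A}p$ and surjectivity of $x\mapsto L_x$ by one direct argument: choose via Zorn a maximal family $(q_i)$ of pairwise orthogonal projections with $q_i\lesssim p$ and $\sum_i q_i=C_p$ (this identity itself needs an argument, via \cite[Prop.~6.1.8]{kr}), pick $v_i\in\mathscr{A}p$ with $v_iv_i^*=q_i$ and $v_i^*v_i\leq p$, show $\sum_i\tau(v_i)v_i^*$ converges ultraweakly using Paschke's matrix inequality (\cite[Thm.~2.8(ii), Prop.~6.1]{bew154}), and evaluate $\tau(ap)=\sum_i\tau(v_i)pv_i^*ap$. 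Your second route --- represent $\mathscr{A}$ faithfully and normally on $\mathscr{H}$, extend a module endomorphism $t$ to an operator $\hat t$ on $[\mathscr{A}p\mathscr{H}]=C_p\mathscr{H}$ commuting with $\mathscr{A}'$, and invoke the double commutant theorem --- is viable, but it is a one-sentence sketch whose nontrivial point (well-definedness and boundedness of $\hat t$ on finite sums $\sum_i a_ip\,\xi_i$) requires precisely the same matrix inequality, and it must be run a second time for maps $\tau\colon\mathscr{A}p\to p\mathscr{A}p$ to obtain self-duality. Finally, note a logical ordering issue: your reduction of $\A(\mathscr{A}\otimes_{h_p}p\mathscr{A}p)=\A(X_0')$ to the bounded module endomorphisms of $X_0=\mathscr{A}p$ is not automatic, since Overview~\ref{ov:hmod}\eqref{hmod:extension-to-dual} only gives an embedding of $\A(X_0)$ into $\A(X_0')$, and a general endomorphism of $X_0'$ need not preserve $X_0$; the reduction is legitimate only \emph{after} self-duality of $\mathscr{A}p$ is in hand, so that $X_0=X_0'$. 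With the self-duality/surjectivity lemma actually proved, your argument closes up and is essentially the paper's.
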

\begin{proof}
    Let $\xymatrix{\mathscr{A}
\ar[r]|-{\varrho} & A(\mathscr{A} \otimes_{h_p} p\mathscr{A}p)
\ar[r]|-{f} & p\mathscr{A}p}$ 
be the Paschke dilation of~$\varphi$
from Theorem~\ref{thm:paschke}.
The plan is to first prove that~$\mathscr{A}\otimes_{h_p} p\mathscr{A}p$
can be identified with~$\mathscr{A}p$,
and then to show that~$\A(\mathscr{A}p)=C_p\mathscr{A}$.

Note that~$\mathscr{A}p$ is a right $p\mathscr{A}p$-module
by~$(ap)\cdot(pbp) = apbp$,
and a pre-Hilbert $p\mathscr{A}p$-module
via~$\left< ap, \alpha p\right> = pa^* \alpha p$.
Since by the $C^*$-identity
the norm on~$\mathscr{A}p$
as a pre-Hilbert $\mathscr{B}$-module
coincides with the norm of~$\mathscr{A}p$
as subset of the von $C^*$-algebra~$\mathscr{A}$,
and~$\mathscr{A}p$ is norm closed in~$\mathscr{A}$,
and~$\mathscr{A}$ is norm complete,
we see that~$\mathscr{A}p$ is complete,
and thus a Hilbert $p\mathscr{A}p$-module.

The next step is to show that~$\mathscr{A}p$ is self-dual.
Let~$\tau\colon \mathscr{A}p \to p\mathscr{A}p$
be a bounded $p\mathscr{A}p$-module map.
We must find~$\alpha p \in \mathscr{A}p$
with~$\tau(ap) = p \alpha^* ap$
for all~$a\in\mathscr{A}$.
This requires some effort.
\begin{enumerate}
\item
We claim that~$C_p = \sup\{r\colon \,r \lesssim p\}$,
where~$r\lesssim p$
denotes that~$r$ is
a projection
which is
\emph{von Neumann-Murray below}~$p$,
i.e.~$r=vv^*$ and $v^*v\leq p$
for some~$v\in\mathscr{A}$,
see~\cite[Def.~6.2.1]{kr}.
To begin, 
writing~$q=\sup\{r\colon \,r\lesssim p\}$,
we have~$C_q=C_p$.
Indeed,
since~$p\lesssim p$, we have $p\leq q$, and so~$C_p\leq C_q$.
For the other direction, $C_q\leq C_p$,
note that if~$r$ is a projection with~$r\lesssim p$,
then~$r\leq C_r\leq C_p$ by~\cite[Prop.~6.2.8]{kr}.
Thus~$q\leq C_p$,
and so~$C_q\leq C_p$.

Thus we must prove that~$C_q-q=0$.
It suffices to show that $C_{C_q-q}=0$.
Note that for every projection~$r$ 
with $r\leq C_q-q$
and~$r\lesssim p$
we have~$r=0$,
because $r\lesssim p$ implies $r\leq q$ and so~$2r\leq C_q$.
Thus,
by~\cite[Prop.~6.1.8]{kr},
we get 
$C_{C_q-q} C_p = 0$.
But since~$C_q-q \leq C_q = C_p$,
we have $C_{C_q-q} \leq C_p$,
and so~$C_{C_q-q} = C_{C_q-q} C_p =0$.

    \item
Using Zorn's lemma,
we can find a 
family $(q_i)_{i\in I}$
of pairwise orthogonal projections
in~$\mathscr{A}$
with $q_i \lesssim p$
and~$C_p\equiv q=\sum_{i\in I}q_i$.
(Here,
and in the remainder of this proof, infinite sums in von Neumann
algebras are taken with respect to the ultraweak topology.)
For each~$i\in I$,
pick~$v_i\in\mathscr{A}$
with~$v_iv_i^* = q_i$
and~$v_i^*v_i \leq p$.
Since $p^\perp v_i^*v_i p^\perp \leq p^\perp p p^\perp=0$,
we have~$ v_ip^\perp =0$ by the $C^*$-identity,
and so~$v_i \in \mathscr{A}p$
for all~$i\in I$.

\item
Our plan is to prove that
$\tau(ap) = \left<(\sum_{i\in I}\tau(v_i)v_i^*)^*,ap\right>$
for all~$a\in\mathscr{A}$,
but first we must show that $\sum_{i\in I} \tau(v_i)v_i^*$
converges ultraweakly.
This requires a slight detour.
Let~$J\subseteq I$ be any finite subset.
Note that~$(\tau(v_i)^* \tau(v_j))_{ij}$
    is a matrix over~$p\mathscr{A}p$
    where~$i$ and~$j$ range over~$J$.
By~\cite[Thm.~2.8 (ii)]{bew154},
we have for all~$(b_i)_{i \in J}$
from~$p\mathscr{A}p$,
\begin{equation*}
\sum_{i,j\in J}b_i^* \tau(v_i)^*\tau(v_j) b_j \  =\ 
\tau(\sum_{i\in J} v_ib_i)^*
\tau(\sum_{i\in J} v_ib_i) \ \leq\ 
\| \tau \| \bigl<
    \sum_{i \in J} v_ib_i,
    \sum_{i \in J} v_ib_i
\bigr>
\ =\ 
\sum_{i,j\in J} b_i^*\left<v_i, v_j\right> b_j,
\end{equation*}
and so~$(\tau(v_i)^*\tau(v_j))_{ij}
    \leq \| \tau \| (\left<v_i, v_j\right>)_{ij}$
    as matrices over~$p\mathscr{A} p$
by~\cite[Prop.~6.1]{bew154}.
Since the projections $(q_i)_{i\in I}$
are pairwise orthogonal,
and $\left<v_i, v_j\right> = v_i^*v_j = v_i^*q_iq_jv_j$,
    we see that
$ (\left<v_i, v_j\right>)_{ij}$ is a diagonal matrix
below $p$, and since 
$\left<v_i,v_i\right>=v_i^*v_i \leq p$ 
we get
\begin{equation*}
\sum_{i,j \in J} v_i \tau(v_i)^* \tau(v_j) v^*_j
                \ \leq \ \|\tau\|  \sum_{i\in J } v_i p v_i^*
            \ =\  \|\tau\| \sum_{i \in J} q_i \ \leq\  \|\tau \|1.
\end{equation*}
From this it follows
that the net of partial sums of~$\tau(v_i)v_i^*$
is
norm bounded (by the $C^*$-identity),
and ultraweakly Cauchy
(by Cauchy--Schwarz and the fact that~$\sum_{j\in J} q_j$
converges ultraweakly as~$J$ increases),
and thus ultraweakly convergent\cite[Prop.~40]{ww16}.
Define~$\alpha \equiv  (\sum_i \tau(v_i)v_i^*)^*$.

\item
\label{item:inpap}
Note that $p\alpha^*= \sum_i p \tau(v_i)v_i^* = \alpha^*$
(because $\tau(v_i)\in p\mathscr{A}p$)
and so~$\alpha \in \mathscr{A}p$.

\item
The linear map~$\tau((\,\cdot\,) p)\colon \mathscr{A} \to \mathscr{A}$
is ultrastrongly continuous,
because if~$a_ip \to 0$ ultrastrongly,
then for any normal state~$\omega$ on~$p\mathscr{A}p$
we have
\begin{equation*}
\omega(\tau(a_ip)^*\tau(a_ip)) 
 \ \leq\  \| \tau \| \omega(\left< a_ip, a_ip \right>)
        \ =\  \|\tau\| \omega( pa_i^* a_i p )\ \rightarrow\  0.
\end{equation*}

\item
Pick any~$ap\in \mathscr{A}p$.
As~$q = \sup_i q_i$,
we have~$q = \sum_i q_i$ ultrastrongly
(combine \cite[Lemma~5.1.4]{kr} with \cite[Prop.~1.15.2]{sakai}),
and thus $\tau(qap) = \sum_i \tau (q_i ap)$
ultraweakly.
Hence
\begin{equation*}
    \tau( ap)
    \ =\  \tau( q ap)
    \ =\   \sum_i \tau(q_i ap)
    \ =\   \sum_i \tau(v_ipv_i^* ap)
    \ =\  \sum_i \tau(v_i)pv_i^* ap
    \ =\  \left< \alpha p,ap\right>,
\end{equation*}
where we use that~$qap=ap$ (since $q=C_p$),
and $q_i = q_i^2 = v_iv_i^*v_iv_i^*=v_ipv_i^*$.
Thus~$\mathscr{A}p$ is self-dual.
\end{enumerate}

Now we will show~$\mathscr{A}p$ is isomorphic
to~$\mathscr{A}\otimes_{h_p} p\mathscr{A}p$.
Note~$[ap\otimes p,ap \otimes p] = pa^*ap$
and so~$ap \otimes p \in N$
if and only if~$ap = 0$.
A straight-forward computation
shows~$a \otimes p\alpha p - a p \alpha p \otimes p \in N$
and so every~$x \in \mathscr{A} \odot p \mathscr{A}p$
is $N$-equivalent to exactly one~$ap \otimes p$ for some~$a \in \mathscr{A}$.
Thus~$ap\otimes p + N \mapsto ap$
fixes an isomorphism~$X_0 \to \mathscr{A}p$.
As~$\mathscr{A}p$ is already complete and self-dual,
so is~$X_0$.
Hence via~$ap \mapsto \widehat{ap\otimes p}$
we have~$\mathscr{A}p \cong X_0 \cong X \cong X' \equiv
        \mathscr{A} \otimes_{h_p} p\mathscr{A}p$
and so~$\xymatrix{\mathscr{A}
\ar[r]|-{\varrho} & \A(\mathscr{A}p)
\ar[r]|-{f} & p\mathscr{A}p}$
is a Paschke dilation for~$h_p$,
where~$\varrho(\alpha) ap = \alpha ap$
and~$f(t)= pt(p)$.

If in the proof above ---
that~$\tau \equiv \alpha^*(\,\cdot\,)p$
for some~$\alpha\in \mathscr{A}p$ --- 
we replace~$\tau$
by a 
bounded $p\mathscr{A}p$-module
map $t\colon \mathscr{A}p\to\mathscr{A}p$,
then the reasoning is still valid
(except for point~\eqref{item:inpap}),
and so we see that
the map~$\varrho
\colon\mathscr{A}\to \A(\mathscr{A}p)$
given by~$\varrho(\alpha_0)(ap)=\alpha_0ap$
is surjective.

Now we show~$\car \varrho = C_p$.
It is sufficient to show
that for each~$\alpha\geq 0$ with~$\alpha\in \mathscr{A}$,
we have that~$\alpha ap = 0$  for all~$a \in \mathscr{A}$
if and only if~$\alpha C_p=0$.
The reverse direction follows from~$\alpha a p = \alpha (1-C_p) ap = 0$
whenever~$\alpha C_p = 0$.
Thus assume~$\alpha ap = 0$ for all~$a \in \mathscr{A}$.
In particular~$0 = \alpha v_i p v_i^* = \alpha q_i$
and so~$\alpha C_p = \alpha q = \sum_i \alpha q_i = 0$,
as desired.
We now know~$\varrho = \vartheta \after h_{C_p}$
for some isomorphism~$\vartheta \colon \A(\mathscr{A}p) \to C_p\mathscr{A}$.
It is easy to see~$f\after \vartheta^{-1} = h_p$
and so we have proven our Theorem.
\end{proof}

\begin{cor}\label{cor:purequiv}
    Let~$\varphi \colon \mathscr{A} \to \mathscr{B}$
        be an NCP-map
        with Paschke dilation
$\xymatrix{\mathscr{A}
\ar[r]|-{\varrho} & \mathscr{P}
\ar[r]|-{f} & \mathscr{B}}$.
    Then the map $\varphi$ is pure if and only if~$\varrho$ is surjective.
\end{cor}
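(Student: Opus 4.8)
The plan is to treat the two implications separately, leaning on the canonical factorisation $\varphi = c_{\varphi(1)}\after\varphi_\ft\after h_{\car\varphi}$ of Proposition~\ref{prop:ww16}. Both statements ``$\varphi$ is pure'' and ``$\varrho$ is surjective'' are unaffected when $\varphi$ is rescaled by a positive constant (the Paschke dilation of $\lambda\varphi$ has the very same $\varrho$), so I may assume $\varphi$ is contractive. Write $p=\car\varphi$ and $q=\varphi(1)$, so $\varphi_\ft\colon p\mathscr{A}p\to\ceil{q}\mathscr{B}\ceil{q}$ is faithful and unital.

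For the implication \emph{pure $\Rightarrow$ surjective}, I would exhibit one Paschke dilation of $\varphi$ whose left leg is visibly surjective, and then transport surjectivity to the given $\varrho$ via Lemma~\ref{lem:dilationsareisomorphic}. Assuming $\varphi$ is pure, $\varphi_\ft$ is an isomorphism, hence a compression for $1$; since compressions are closed under composition (Proposition~\ref{prop:compclosed}), the map $c:=c_q\after\varphi_\ft\colon p\mathscr{A}p\to\mathscr{B}$ is a compression, namely for $c(1)=q\in[0,1]_\mathscr{B}$. Theorem~\ref{thm:paschkecorner} supplies the Paschke dilation $\mathscr{A}\xrightarrow{h_{C_p}}C_p\mathscr{A}\xrightarrow{h_p}p\mathscr{A}p$ of the corner $h_p$, and Proposition~\ref{prop:paschkecompression} lets me postcompose $c$ to conclude that $\mathscr{A}\xrightarrow{h_{C_p}}C_p\mathscr{A}\xrightarrow{c\after h_p}\mathscr{B}$ is a Paschke dilation of $c\after h_p=\varphi$. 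Its left leg $h_{C_p}$ is a corner, hence surjective (Proposition~\ref{prop:ww16}); applying Lemma~\ref{lem:dilationsareisomorphic} to this dilation and the given one yields an isomorphism $\vartheta$ with $\vartheta\after\varrho=h_{C_p}$, whence $\varrho$ is surjective.

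For the converse \emph{surjective $\Rightarrow$ pure}, I would not use the factorisation at all. By Corollary~\ref{cor:fispure} the map $f$ is always pure, so by Proposition~\ref{prop:compclosed} it is enough to show that a surjective NMIU-map $\varrho$ is pure; then $\varphi=f\after\varrho$ is a composite of pure maps. As $\varrho$ is unital, $c_{\varrho(1)}=\id$ and so $\varrho_\ft\after h_{\car\varrho}=\varrho$ with $\varrho_\ft$ faithful and unital (Proposition~\ref{prop:ww16}). The corner $h_{\car\varrho}$ is surjective, so $\varrho_\ft$ is surjective because $\varrho$ is; and $\varrho_\ft$ is injective because the kernel of the normal $*$-homomorphism $\varrho$ is $z\mathscr{A}$ for a central projection $z$, giving $\car\varrho=1-z$ and identifying $\varrho_\ft$ with the induced map $(1-z)\mathscr{A}\to\mathscr{P}$. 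Thus $\varrho_\ft$ is a bijective unital NCP-map, hence an NMIU-isomorphism by \cite[Corollary~47]{ww16}, so $\varrho$ is pure.

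The main obstacle is the first implication, and the key insight is that \emph{purity is exactly what makes $c_q\after\varphi_\ft$ a compression}: this is what allows the compression-invariance of Paschke dilations (Proposition~\ref{prop:paschkecompression}) to reduce the dilation of $\varphi$ to that of the corner $h_p$, whose left leg is surjective by Theorem~\ref{thm:paschkecorner}. The two routine points to verify are that $c_q\after\varphi_\ft$ really is a compression for $q\in[0,1]_\mathscr{B}$ (so that Proposition~\ref{prop:paschkecompression} applies verbatim), and that the isomorphism from Lemma~\ref{lem:dilationsareisomorphic} intertwines $\varrho$ with $h_{C_p}$ in the direction needed to deduce surjectivity of $\varrho$.
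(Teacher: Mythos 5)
Your proposal is correct and takes essentially the same route as the paper: for pure $\Rightarrow$ surjective both arguments factor $\varphi=(c_{\varphi(1)}\after\varphi_\ft)\after h_{\car\varphi}$, observe the left factor is a compression, and combine Theorem~\ref{thm:paschkecorner}, Proposition~\ref{prop:paschkecompression} and Lemma~\ref{lem:dilationsareisomorphic} to get $\varrho=\vartheta\after h_{C_{\car\varphi}}$; for the converse both use that $\ker\varrho=z\mathscr{A}$ for a central projection~$z$, together with Corollary~\ref{cor:fispure} and closure of pure maps under composition. The only cosmetic difference is that the paper phrases the converse as ``$\varrho$ is a corner'' via the isomorphism theorem, whereas you verify directly that $\varrho_\ft$ is an isomorphism --- the same underlying argument.
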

\begin{proof}[Proof of Corollary~\ref{cor:purequiv}]
Assume~$\varrho$ is surjective.
The kernel of~$\varrho$
is~$z \mathscr{A}$ for some central projection~$z$.
(See e.g.~\cite[1.10.5]{sakai}.)
Note the quotient-map for the kernel of~$\varrho$
is a corner for~$z$,
hence by the isomorphism theorem~$\varrho$ is a corner.
By Corollary~\ref{cor:fispure} $f$ is pure.
Thus~$\varphi$ is the composition of pure maps and hence pure.

Now assume~$\varphi$ is pure.
By scaling, we may assume~$\varphi$ is contractive.
Write~$p \equiv \car \varphi$.
Note $\varphi = c \after h_p$
for some compression~$c\colon p\mathscr{A}p \to \mathscr{B}$
and standard corner~$h_p$ for~$p$.
$\xymatrix{\mathscr{A}
\ar[r]|-{h_{C_p}} & C_p\mathscr{A}
\ar[r]|-{h_p} & \mathscr{pAp}}$
is a Paschke dilation for~$h_p$
and so by Proposition~\ref{prop:paschkecompression},
$\xymatrix{\mathscr{A}
\ar[r]|-{h_{C_p}} & \mathscr{zA}
\ar[r]|-{c\after h_p} & \mathscr{B}}$
is a Paschke dilation for~$\varphi$.
By Lemma~\ref{lem:dilationsareisomorphic},
we know~$\varrho = \vartheta \after h_{C_p}$
for some isomorphism~$\vartheta$
and so~$\varrho$ is surjective.
\end{proof}

Now we have a better grip on when a Paschke embedding is surjective.
The following is a characterization of when a Paschke embedding
is injective.  This is a generalization of our answer\cite{stineinj} to the
same question for the Stinespring embedding.
\begin{thm}
    Let~$\varphi \colon \mathscr{A} \to \mathscr{B}$
        be an NCP-map
        with Paschke dilation
$\xymatrix{\mathscr{A}
\ar[r]|-{\varrho} & \mathscr{P}
\ar[r]|-{f} & \mathscr{B}}$.
    The map $\varrho$ is injective if and only
        $\varphi$ maps no non-zero central projection to zero.
            (Equivalently: $C_{\car \varphi} = 1$.)
\end{thm}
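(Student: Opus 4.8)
The plan is to reduce to the concrete model of Theorem~\ref{thm:paschke} and compute $\ker\varrho$ directly. By Lemma~\ref{lem:dilationsareisomorphic} any two Paschke dilations of~$\varphi$ differ by an NMIU-isomorphism, which is in particular injective, so it suffices to treat the standard dilation $\varrho\colon\mathscr{A}\to\A(\mathscr{A}\otimes_\varphi\mathscr{B})$ with $\varrho(a_0)(a\otimes b)=(a_0a)\otimes b$. As in the proof of Corollary~\ref{cor:purequiv}, $\ker\varrho$ is a weakly closed two-sided ideal, so $\ker\varrho=(1-z)\mathscr{A}$ for a central projection~$z$, and $\varrho$ is injective iff $z=1$; the whole theorem then reduces to identifying~$z$ with~$C_{\car\varphi}$. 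The first step is to translate the vanishing of $\varrho(a_0)$: since $\varrho(a_0)$ is the unique bounded module map extending its values on the generators $a\otimes b=(a\otimes 1)b$, and $x\mapsto\hat x$ embeds $X_0$ injectively into its dual, we have $\varrho(a_0)=0$ iff $(a_0a)\otimes 1\in N$ for all~$a$, that is, iff $\varphi\bigl((a_0a)^*(a_0a)\bigr)=0$ for all $a\in\mathscr{A}$.

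The key lemma, writing $p=\car\varphi$, is that $\varphi(c^*c)=0$ iff $cp=0$ for every $c\in\mathscr{A}$. I would prove it by first invoking normality: $\varphi(c^*c)=0$ iff $\varphi(\ceil{c^*c})=0$, because $\ceil{c^*c}=\sup_n f_n(c^*c)$ with $f_n(t)=\min(nt,1)\le nt$, so that $\varphi(f_n(c^*c))\le n\varphi(c^*c)$ and $\varphi(\ceil{c^*c})=\sup_n\varphi(f_n(c^*c))$. Next, $\varphi(\ceil{c^*c})=0$ iff $\ceil{c^*c}\le 1-\car\varphi$, since $1-\car\varphi$ is by definition the largest projection annihilated by~$\varphi$; and as $\ceil{c^*c}$ is the right support of~$c$, this says exactly $cp=0$. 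Applying this with $c=a_0a$ rewrites the condition from the previous paragraph as: $a_0ap=0$ for all $a\in\mathscr{A}$.

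It remains to replace~$p$ by its central carrier $z=C_p$, i.e. to show $a_0\mathscr{A}p=\{0\}$ iff $a_0C_p=0$. One direction is immediate: if $a_0C_p=0$ then $a_0ap=a_0aC_pp=a_0C_pap=0$, using $p=C_pp$ and centrality of~$C_p$. Conversely $a_0\mathscr{A}p=0$ gives $a_0\,\mathscr{A}p\mathscr{A}=0$; since the ultraweak closure of the two-sided ideal $\mathscr{A}p\mathscr{A}$ is $C_p\mathscr{A}$ (see~\cite{kr}) and left multiplication by~$a_0$ is ultraweakly continuous, we get $a_0C_p\mathscr{A}=0$, hence $a_0C_p=0$. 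Thus $\ker\varrho=(1-C_{\car\varphi})\mathscr{A}$, so $\varrho$ is injective iff $C_{\car\varphi}=1$. The parenthetical equivalence is then a short check: if a central projection~$z_0$ has $\varphi(z_0)=0$ then $z_0\le 1-\car\varphi$, so $z_0\car\varphi=0$ and hence $z_0C_{\car\varphi}=0$, forcing $z_0=0$ once $C_{\car\varphi}=1$; conversely, if $C_{\car\varphi}\neq 1$ then $z_0:=1-C_{\car\varphi}$ is a nonzero central projection with $z_0\car\varphi=0$, so that $\varphi(z_0)\le\varphi(1-\car\varphi)=0$.

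The main obstacle I anticipate is the support lemma $\varphi(c^*c)=0\iff cp=0$, specifically the normality argument yielding $\varphi(\ceil{c^*c})=0$; the remaining steps are routine ideal bookkeeping. One should also take care that vanishing of $\varrho(a_0)$ on the generators really forces $\varrho(a_0)=0$ as an adjointable operator on $\mathscr{A}\otimes_\varphi\mathscr{B}$, which is exactly the uniqueness of the module-map extension from $X_0$ to its dual recorded in Overview~\ref{ov:hmod}.
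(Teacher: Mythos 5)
Your proof is correct, and while its opening reduction coincides with the paper's, the bridge to the central carrier is genuinely different. The first claim is the same in both: $\varrho(a_0)=0$ iff $\varphi(a^*a_0^*a_0a)=0$ for all~$a$. Your derivation of it is slightly cleaner, though: you kill each generator via $(a_0a)\otimes b=((a_0a)\otimes 1)b$, using that~$N$ is a submodule and then the uniqueness of the extension from~$X_0$ to its dual, whereas the paper controls $\varrho(a_0)$ on general sums $\sum_i a_i\otimes b_i$ through the matrix-positivity result \cite[Prop.~6.1]{bew154}. The divergence comes next. The paper handles the two directions separately: for injectivity implies $C_{\car\varphi}=1$ it exhibits $1-C_{\car\varphi}\in\ker\varrho$ via $a^*(1-C_{\car\varphi})a\leq\|a\|^2(1-\car\varphi)$, and for the converse it recycles the Zorn/comparison machinery from the proof of Theorem~\ref{thm:paschkecorner} (a maximal orthogonal family of projections $q_i\lesssim\car\varphi$ with $\sup_i q_i=C_{\car\varphi}$, together with partial isometries~$v_i$). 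You instead prove the support lemma $\varphi(c^*c)=0\iff c\,\car\varphi=0$ --- your normality argument with $f_n(t)=\min(nt,1)$ is sound, and the step from $\varphi(e)=0$ to $e\leq 1-\car\varphi$ is immediate from the paper's definition of~$\car\varphi$ as a least projection --- and then quote the standard fact that the ultraweak closure of the ideal generated by~$\car\varphi$ is $C_{\car\varphi}\mathscr{A}$, which yields the sharper conclusion $\ker\varrho=(1-C_{\car\varphi})\mathscr{A}$ and both directions at once. Two remarks. First, the paper itself silently uses your support lemma in its converse direction (the step from $\varphi(v_i^*\alpha^*\alpha v_i)=0$ to $v_i^*\alpha^*\alpha v_i\leq 1-\car\varphi$), so the ``main obstacle'' you flag is real but is equally present, unproved, in the paper; you actually supply the missing argument. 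Second, you explicitly verify the parenthetical equivalence between $C_{\car\varphi}=1$ and the annihilation of no nonzero central projection, which the paper's proof leaves implicit. The trade-off: the paper's route is self-contained modulo its own Theorem~\ref{thm:paschkecorner}, while yours is shorter and identifies the kernel exactly, at the cost of outsourcing the comparison-theoretic content to~\cite{kr}. (Your preliminary appeal to the Sakai-style structure of $\ker\varrho$ as $(1-z)\mathscr{A}$ is harmless but redundant, since you compute the kernel outright.)
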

\begin{proof}
    Let~$\alpha \in \mathscr{A}$.
    As a first step,
    we claim~$\varrho(\alpha) = 0$
        if and only~$\varphi(a^* \alpha^*\alpha a)=0$
            for every~$a \in \mathscr{A}$.
        From left to right is easy (expand~$\varrho(\alpha) a \otimes 1$).
    To show the converse,
            assume~$\varphi(a^* \alpha^* \alpha a) = 0$
                for all~$a \in \mathscr{A}$.
    Then for every sequence~$a_1, \ldots, a_n \in \mathscr{A}$
        we may use~$a := \sum_i a_i$
        and so~$\sum_{i,j}\varphi(a_j^* \alpha^*\alpha a_i) = 0$.
    From this and \cite[Prop.~6.1]{bew154} 
        it follows that for 
            every sequence~$b_1, \ldots, b_n \in \mathscr{B}$,
        we have~$\sum_{i,j} b_j^* \varphi(a_j^* \alpha^* \alpha a_i)b_i=0$.
    Hence~$\varrho(\alpha) \sum_i a_i \otimes b_i = 0$
            for any~$\sum_i a_i \otimes b_i$.
    This is sufficient to conclude~$\varrho(\alpha) = 0$, as desired.

    Assume~$\varrho$ is injective.
    For brevity, write~$p := \car \phi$.
    Let~$a \in \mathscr{A}$ be given.
    Note
        \begin{equation*}
a^* (1-C_p) a = (1-C_p)a^*a(1-C_p) \leq \| a\|^2 (1-C_p)
                \leq \|a\|^2 (1-p)
        \end{equation*}
    and so~$\varphi(a^* (1-C_p) a) \leq \|a\|^2\varphi(1-p)= 0$.
    By the initial claim, we see $\varrho(1-C_p)=0$ and
            so~$C_p=1$, as desired.

    For the converse, assume~$C_p=1$ and~$\varrho(\alpha)=0$.
    By Zorn's lemma, find a maximal family of orthogonal
        projections~$(q_i)_{i \in I}$ from~$\mathscr{A}$
        with~$q_i \lesssim p$.
    Then~$\sup_{i\in I} q_i = C_p$; see point 2 from
    the proof of Thm.~\ref{thm:paschkecorner}.
        For each~$q_i$ pick a~$v_i$ such
            that~$v_i v_i^* = q_i$ and~$v_i^*v_i \leq p$.
    From~$\varrho(\alpha)=0$ we saw it follows that
        $\varphi(a^* \alpha^* \alpha a)=0$ for all~$a \in \mathscr{A}$.
    In particular~$\varphi(v_i^* \alpha^* \alpha v_i)=0$.
    Without loss of generality we may assume~$a^*a \leq 1$
        and then~$v_i^* \alpha^*\alpha v_i \leq 1-p$.
        Hence
\begin{equation*}
q_i \alpha^*\alpha q_i = v_iv_i^* \alpha^*\alpha v_i v_i^*
                \leq v_i (1-p) v_i^* = v_i v_i^* - v_iv_i^*=0.
\end{equation*}
    Consequently~$\alpha^*\alpha \leq 1-q_i$ for every~$i \in I$.
        Thus~$\alpha^* \alpha \leq 1- \sup_{i\in I} q_i = 1-C_p=0$.
    By the C$^*$-identity,~$\alpha=0$ and so~$\varrho$ is indeed injective.
\end{proof}

To continue our study of pure maps,
we need some preparation.
\begin{dfn}
    Let~$\varphi\colon \mathscr{A} \to \mathscr{B}$ be any NCP-map.
\begin{enumerate}
    \item
Write $[0,\varphi]_\NCP \equiv \{\psi\colon \mathscr{A} \to \mathscr{B}
                \text{ NCP-map};\ 
        \varphi - \psi \text{ is completely positive} \}.$
    \item
    We say~$\varphi$ is \keyword{NCP-extreme},
        if it is an extreme point among the NCP-maps
            with same value on~$1$;
    that is: $\lambda \varphi_1 + (1-\lambda)\varphi_2 = \varphi$
            for~$\varphi_1,\varphi_2\colon \mathscr{A} \to \mathscr{B}$
                NCP-maps and~$0<\lambda<1$
        implies~$\varphi_1=\varphi_2=\varphi$
    \item
If $\xymatrix{\mathscr{A}
\ar[r]|-{\varrho} & \mathscr{P}
\ar[r]|-{f} & \mathscr{B}}$  is a Paschke dilation of~$\varphi$
and~$t \in \varrho(\mathscr{A})'$ with  $t \geq 0$,
define~$\varphi_t\colon \mathscr{A} \to \mathscr{B}$
by~$\varphi_t(a) \equiv f(\sqrt{t} \varrho(a) \sqrt{t})$.
\end{enumerate}
\end{dfn}

\begin{thm}\label{thm:correspondence}
Assume $\varphi\colon \mathscr{A} \to \mathscr{B}$ is an NCP-map
        with Paschke dilation
$\xymatrix{\mathscr{A}
\ar[r]|-{\varrho} & \mathscr{P}
\ar[r]|-{f} & \mathscr{B}}$.
\begin{enumerate}
    \item
        The map $t \mapsto \varphi_t$
            is an affine order isomorphism~$[0,1]_{\varrho(\mathscr{A})'}
                        \to [0,\varphi]_\NCP$.
    \item $\varphi$ is NCP-extreme
            if and only if~$t \mapsto \varphi_t(1)$
            is injective on~$[0,1]_{\varrho(\mathscr{A})'}$.
\end{enumerate}
\end{thm}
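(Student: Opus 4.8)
The plan is to reduce to the standard Paschke dilation of $\varphi$ from Theorem~\ref{thm:paschke}, for which $\mathscr{P}=\A(\mathscr{A}\otimes_\varphi\mathscr{B})$, $\varrho(a)(a'\otimes b)=(aa')\otimes b$ and $f(T)=\langle e,Te\rangle$ with $e=1\otimes1$; by Lemma~\ref{lem:dilationsareisomorphic} every other dilation arises from this one through an NMIU-isomorphism $\vartheta$, which carries $\varrho(\mathscr{A})'$ onto the commutant in the new algebra and satisfies $\varphi_{\vartheta(t)}=\varphi_t$, so both claims are independent of the chosen dilation. Since $t\in\varrho(\mathscr{A})'$ forces $\sqrt t$ to commute with each $\varrho(a)$, we have $\varphi_t(a)=f(t\varrho(a))=\langle e,t(a\otimes1)\rangle$; extending this formula defines $\varphi_r(a):=f(r\varrho(a))$ for every self-adjoint $r\in\varrho(\mathscr{A})'$, making $r\mapsto\varphi_r$ linear, with $\varphi_1=\varphi$ and $\varphi_{1-t}=\varphi-\varphi_t$.

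The engine of part (1) is the identity, valid for self-adjoint $r\in\varrho(\mathscr{A})'$ and $x=\sum_i a_i\otimes b_i\in X_0$,
\[
\langle x,rx\rangle=\sum_{i,j}b_i^*\,\varphi_r(a_i^*a_j)\,b_j,
\]
which follows by writing $a_i\otimes b_i=\varrho(a_i)\,e\,b_i$, pulling $\varrho(a_i)^*=\varrho(a_i^*)$ through $r$ (it commutes) and using $\varrho(a_i^*)\varrho(a_j)=\varrho(a_i^*a_j)$ and $f=\langle e,(\cdot)e\rangle$. As $\mathscr{A}\otimes_\varphi\mathscr{B}$ is the dual of $X_0$, positivity of $r$ may be tested on $X_0$ (Overview~\ref{ov:hmod}\eqref{hmod:ax}), so the identity yields $r\geq0\iff\varphi_r$ is CP: if $r\geq0$ then $\varphi_r$ is a composite of CP maps, while if $\varphi_r$ is CP then $(a_i^*a_j)_{ij}\geq0$ gives $(\varphi_r(a_i^*a_j))_{ij}\geq0$, whose $(b_i)$-compression $\langle x,rx\rangle$ is positive. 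This single equivalence delivers well-definedness ($\varphi_t,\varphi_{1-t}$ are completely positive and normal, so $\varphi_t\in[0,\varphi]_\NCP$), affineness (linearity of $r\mapsto\varphi_r$), injectivity, and order in both directions: $t\leq s\iff\varphi_{s-t}$ is CP $\iff\varphi_t\leq\varphi_s$.

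The substantive step is surjectivity. Given $\psi\in[0,\varphi]_\NCP$ I would define a $\mathscr{B}$-valued form on $X_0$ by $B(\sum_i a_i\otimes b_i,\sum_j a_j'\otimes b_j')=\sum_{i,j}b_i^*\psi(a_i^*a_j')b_j'$; since $0\leq\psi\leq\varphi$ we get $0\leq B\leq\langle\cdot,\cdot\rangle$, so $B$ is well defined and bounded on $X_0$, and the form--operator correspondence on the self-dual module produces a unique $t$ with $0\leq t\leq1$ and $\langle x,ty\rangle=B(x,y)$. Comparing $\langle x,t\varrho(a_0)y\rangle$ with $\langle x,\varrho(a_0)ty\rangle$ on generators shows $t\in\varrho(\mathscr{A})'$, and $\varphi_t(a)=\langle e,t(a\otimes1)\rangle=B(e,a\otimes1)=\psi(a)$. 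I expect this form--operator correspondence --- the module analogue of reconstructing an operator from its quadratic form, sketched in the existence part of Theorem~\ref{thm:paschke} --- to be the main obstacle, as it is the only genuinely module-theoretic input and must be set up carefully from self-duality.

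For part (2), note first that $\varphi_t(1)=f(t)$, so the map under consideration is $f$ restricted to $[0,1]_{\varrho(\mathscr{A})'}$. By the usual characterisation of extreme points, $\varphi$ fails to be NCP-extreme precisely when there is a nonzero self-adjoint $\delta$ with $\delta(1)=0$ and $\varphi\pm\delta$ both CP; setting $\psi=\tfrac12(\varphi+\delta)\in[0,\varphi]_\NCP$ and applying part (1), this is equivalent to the existence of $t\neq\tfrac12$ in $[0,1]_{\varrho(\mathscr{A})'}$ with $f(t)=f(\tfrac12)$, i.e.\ to failure of injectivity of $f$ at $\tfrac12$. Finally, because $f$ is affine and $\tfrac12\,1$ is an interior point of the unit interval, any witnessing pair $t_0\neq t_1$ with $f(t_0)=f(t_1)$ can be transported to $\tfrac12$ by passing to $\tfrac12\,1\pm s(t_0-t_1)$ for small $s>0$; hence non-injectivity at $\tfrac12$ coincides with non-injectivity, and $\varphi$ is NCP-extreme if and only if $t\mapsto\varphi_t(1)$ is injective.
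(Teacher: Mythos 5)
Your proposal is correct, but it takes a substantially more self-contained route than the paper. The paper's own proof consists only of your first step: it transfers the statement from an arbitrary dilation to the standard one via the unique mediating isomorphism $\vartheta$ of Lemma~\ref{lem:dilationsareisomorphic} (checking $\varphi^{\mathscr{P}}_t = \varphi^{\mathscr{P}_s}_{\vartheta(t)}$, exactly as you do), and then cites Paschke \cite[Prop.~5.4 and Thm.~5.4]{bew154} for both points on the standard dilation. Everything after your first paragraph is therefore a reconstruction of Paschke's cited results, and it is sound: the quadratic-form identity $\left<x,rx\right>=\sum_{i,j}b_i^*\varphi_r(a_i^*a_j)b_j$ combined with the positivity test on $X_0$ (Overview~\ref{ov:hmod}, item 7, valid because $\mathscr{A}\otimes_\varphi\mathscr{B}=X_0'$) correctly yields $r\geq 0\iff\varphi_r$ CP, which indeed delivers well-definedness, affineness, injectivity and the order isomorphism in one stroke. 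The step you flag as the main obstacle --- the form--operator correspondence --- is in fact nearly immediate here and less delicate than you fear: for fixed $y\in X_0$ the map $B(y,\,\cdot\,)\colon X_0\to\mathscr{B}$ is a bounded module map, hence \emph{by definition} an element of $X_0'=\mathscr{A}\otimes_\varphi\mathscr{B}$, so one may set $ty:=B(y,\,\cdot\,)$ and extend to all of $X_0'$ by Overview~\ref{ov:hmod}, item 6; self-adjointness, $0\leq t\leq 1$, and $t\in\varrho(\mathscr{A})'$ then follow from the $X_0$-positivity test and your generator computation. Two small points deserve a line in a polished write-up: in the surjectivity step one must check $\psi\in[0,\varphi]_\NCP$ implies the form $B$ descends to the quotient $X_0$ (your bound $0\leq B\leq\left<\cdot,\cdot\right>$ does this via Cauchy--Schwarz), and in part (2) the direction from a witnessing $\delta$ back to non-extremeness needs $\tfrac12(\varphi+\delta)$ to be \emph{normal} before part (1) applies --- this holds because a positive map CP-dominated by the normal map $\varphi$ is automatically normal. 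Your $\tfrac12$-transport trick for part (2) is a clean argument; net, the paper buys brevity by outsourcing to \cite{bew154}, while your version buys a proof readable without Paschke's paper at hand.
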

\begin{proof}
The Paschke dilation constructed
in Theorem~\ref{thm:paschke}
is called the standard Paschke dilation,
for which
point 1 is shown in~\cite[Prop.~5.4]{bew154}
and point 2 in~\cite[Thm.~5.4]{bew154}.
We show the result carries to an arbitrary Paschke dilation.
Write
$\xymatrix{\mathscr{A}
\ar[r]|-{\varrho_s} & \mathscr{P}_s
\ar[r]|-{f_s} & \mathscr{B}}$
for the standard Paschke dilation of~$\varphi$.
Write~$\varphi_t^\mathscr{P}$
and~$\varphi_t^{\mathscr{P}_s}$
to distinguish between~$\varphi_t$
relative to the given and standard Paschke dilation.
Let~$\vartheta\colon \mathscr{P} \to \mathscr{P}_s$
be the mediating isomorphism
from Lemma~\ref{lem:dilationsareisomorphic}.

It is easy to see~$\vartheta$
restricts to an affine order
isomorphism~$[0,1]_{\varrho(\mathscr{A})'} \to [0,1]_{\varrho_s(\mathscr{A})'}$.
Note
\begin{equation*}
\varphi^{\mathscr{P}}_t (a) = f(\sqrt{t} \varrho(a) \sqrt{t})
                = f_s (\vartheta(\sqrt{t} \varrho(a) \sqrt{t}))
                = f_s (\sqrt{\vartheta(t)} \varrho_s(a) \sqrt{\vartheta(t)})
                = \varphi^{\mathscr{P}_s}_{\vartheta(t)}(a)
\end{equation*}
and so~$t \mapsto \varphi^{\mathscr{P}}_t$
is the composition of affine order
isomorphisms~$\vartheta^{-1}$
and~$t \mapsto \varphi^{\mathscr{P}_s}_t$,
which proves 1.
Finally, for 2,
note~$t \mapsto \varphi_t^{\mathscr{P}_s}(1)$
is injective if and only
if~$t \mapsto \varphi_t^{\mathscr{P}}(1)$
is as~$\vartheta^{-1}$ is  injective.
\end{proof}

\begin{prop}\label{prop:stormerpure}
    Let~$\varphi \colon \mathscr{A} \to B(\mathscr{H})$
        be any NCP-map.
    Then~$\varphi$
        is pure in the definition of St\o rmer \cite[Def.~3.5.4]{stormer}
        if and only if~$\varphi$ is pure
        as in Def.~\ref{dfn:pure}.
\end{prop}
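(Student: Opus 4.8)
The plan is to pick a convenient Paschke dilation of $\varphi$ and to translate both notions of purity into statements about the representation it supplies. Since $\varphi$ maps into $B(\mathscr{H})$, I would fix a minimal normal Stinespring dilation $(\mathscr{K},\pi,V)$ of $\varphi$; by Theorem~\ref{thm:stinespring-paschke} this provides a Paschke dilation $\mathscr{A}\xrightarrow{\pi}B(\mathscr{K})\xrightarrow{\Ad_V}B(\mathscr{H})$, so that $\mathscr{P}=B(\mathscr{K})$, $\varrho=\pi$ and $f=\Ad_V$. The whole argument will hinge on $B(\mathscr{K})$ being a \emph{factor}, which is exactly why one wants $\mathscr{P}=B(\mathscr{K})$ rather than an arbitrary Paschke dilation.

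First I would translate St\o rmer-purity. By Theorem~\ref{thm:correspondence} the map $t\mapsto\varphi_t$ is an affine order isomorphism $[0,1]_{\pi(\mathscr{A})'}\to[0,\varphi]_\NCP$, where $\pi(\mathscr{A})'$ is the commutant of $\pi(\mathscr{A})$ inside $\mathscr{P}=B(\mathscr{K})$, hence the full commutant in $B(\mathscr{K})$. A direct computation gives $\varphi_{\lambda 1}(a)=f(\sqrt\lambda\,\pi(a)\sqrt\lambda)=\lambda f(\pi(a))=\lambda\varphi(a)$, so the scalar multiples $\{\lambda\varphi:\lambda\in[0,1]\}$ are precisely the image of $[0,1]_{\C 1}\subseteq[0,1]_{\pi(\mathscr{A})'}$ under this bijection. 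As the isomorphism is injective, $\varphi$ is St\o rmer-pure, i.e.\ $[0,\varphi]_\NCP=\{\lambda\varphi\}$, if and only if $[0,1]_{\pi(\mathscr{A})'}=[0,1]_{\C 1}$, which (a von Neumann algebra being spanned by the positive part of its unit interval) is equivalent to $\pi(\mathscr{A})'=\C 1$.

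Next I would translate Def.~\ref{dfn:pure}-purity, which is immediate: by Corollary~\ref{cor:purequiv}, $\varphi$ is pure if and only if $\varrho=\pi$ is surjective, i.e.\ $\pi(\mathscr{A})=B(\mathscr{K})$. It then remains to reconcile the two conditions. Since $\pi$ is a normal $*$-homomorphism, its kernel is $z\mathscr{A}$ for a central projection $z$, and $\pi$ restricts to a normal isomorphism $(1-z)\mathscr{A}\cong\pi(\mathscr{A})$; hence $\pi(\mathscr{A})$ is a von Neumann subalgebra of $B(\mathscr{K})$, in particular weakly closed, so $\pi(\mathscr{A})=\pi(\mathscr{A})''$ by the double commutant theorem. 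Therefore $\pi(\mathscr{A})=B(\mathscr{K})$ iff $\pi(\mathscr{A})'=B(\mathscr{K})'=\C 1$. Combining the three paragraphs yields $\varphi$ St\o rmer-pure $\iff\pi(\mathscr{A})'=\C 1\iff\pi(\mathscr{A})=B(\mathscr{K})\iff\varphi$ pure in the sense of Def.~\ref{dfn:pure}, as desired.

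The step I expect to be the main obstacle is this last equivalence. One must be careful that the commutant occurring in Theorem~\ref{thm:correspondence} is the \emph{full} commutant in $B(\mathscr{K})$ (so that surjectivity of $\pi$ forces a trivial commutant; for a general $\mathscr{P}$ the relative commutant of a surjective $\varrho$ would only be the centre $Z(\mathscr{P})$, which need not be $\C 1$), and one must invoke that a normal $*$-homomorphism has weakly closed range in order to pass between $\pi(\mathscr{A})=B(\mathscr{K})$ and $\pi(\mathscr{A})'=\C 1$. The pathological case $\varphi=0$, where $\mathscr{K}=\{0\}$, is handled uniformly, as $B(\{0\})$ is trivially a factor and $\pi$ is trivially surjective.
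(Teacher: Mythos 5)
Your proposal follows essentially the same route as the paper's proof, just reorganized into a single chain of equivalences: the paper also works with a Paschke dilation whose middle algebra is $B(\mathscr{K})$ (it takes the standard dilation and notes $\mathscr{P}\cong B(\mathscr{K})$ via Theorem~\ref{thm:stinespring-paschke}, which up to Lemma~\ref{lem:dilationsareisomorphic} is the same as your choice of a minimal Stinespring dilation), it also translates Def.~\ref{dfn:pure}-purity into surjectivity of $\varrho$ via Corollary~\ref{cor:purequiv}, it also uses the correspondence $t\mapsto\varphi_t$ with $\varphi_{\lambda 1}=\lambda\varphi$ and its injectivity, and it also passes between surjectivity of $\varrho$ and triviality of the commutant by the double commutant theorem together with the factor property of $B(\mathscr{K})$ (in the paper: $\mathscr{P}=Z(\mathscr{P})'\subseteq\varrho(\mathscr{A})''=\varrho(\mathscr{A})$, which is your weak-closedness remark in contrapositive form).

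There is, however, one genuine gap. You identify St\o rmer-purity with the statement $[0,\varphi]_\NCP=\{\lambda\varphi\colon\lambda\in[0,1]\}$, but St\o rmer's Def.~3.5.4 quantifies over \emph{all} completely positive maps below $\varphi$, with no normality assumption, whereas Theorem~\ref{thm:correspondence} parametrizes only the \emph{normal} ones. This matters in exactly one direction: to show that Def.~\ref{dfn:pure}-purity implies St\o rmer-purity you must dispose of an arbitrary CP map $\psi$ with $\varphi-\psi$ completely positive, and your equivalences only rule out normal such $\psi$. The paper closes this hole by applying Paschke's Prop.~5.4 of \cite{bew154} directly to $\psi$, which exhibits $\psi=\varphi_t$ for some $t\in[0,1]_{\varrho(\mathscr{A})'}$ and hence shows $\psi$ is automatically normal. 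Alternatively, a one-line argument suffices: if $a_i\nearrow a$ is a bounded increasing net, then $0\leq\psi(a)-\psi(a_i)=\psi(a-a_i)\leq\varphi(a-a_i)=\varphi(a)-\varphi(a_i)$, and the right-hand side decreases to $0$ by normality of $\varphi$, so any positive map dominated by a normal one is itself normal. With that observation added, your proof is complete and coincides with the paper's.
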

\begin{proof}
    Let~$\varphi\colon \mathscr{A} \to B(\mathscr{H})$ be any NCP-map
        with standard Paschke dilation~$\xymatrix{\mathscr{A}\ar[r]|\varrho &
            \mathscr{P}
    \ar[r]|f & B(\mathscr{H})}$.
        Note that by Theorem~\ref{thm:stinespring-paschke},
        we know~$\mathscr{P}\cong B(\mathscr{K})$ for some
            Hilbert space~$\mathscr{K}$
        and so it is a factor.

    Assume~$\varphi\colon \mathscr{A} \to B(\mathscr{H})$
        is pure as in Def.~\ref{dfn:pure}.
    Let~$\psi \colon \mathscr{A} \to B(\mathscr{H})$
        with~$\varphi - \psi$ completely positive.
    To show~$\varphi$ is pure in the sense of St\o rmer,
        we have to show~$\psi = \lambda \varphi$
        for some~$\lambda \in [0,1]$.
    By \cite[Prop.~5.4]{bew154},
        $\psi = \varphi_t$
        for some~$t \in \varrho(\mathscr{A})'$
        with~$0 \leq t \leq 1$.
    In particular~$\psi$ is normal.
    As~$\varphi$ is pure
        we know by Corollary~\ref{cor:purequiv}
        that~$\varrho$ is surjective.
    Thus~$\varrho(\mathscr{A})' = Z(\mathscr{P}) = \C 1$.
    Thus~$t = \lambda 1$ for some~$\lambda \in [0,1]$.
    We conclude~$\psi = \varphi_t = \varphi_{\lambda1}
                = \lambda \varphi_1 = \lambda \varphi$
        as desired.

    For the converse, assume~$\varphi$ is St\o rmer pure.
        By Corollary~\ref{cor:purequiv},
        it is sufficient to show~$\varrho$ is surjective.
        As~$\varrho(\mathscr{A})$
            is a von Neumann subalgebra of~$\mathscr{P}$,
            we may conclude that~$\varrho$ is surjective
            if we can show~$\varrho(\mathscr{A})' \subseteq Z(\mathscr{P})$
            as~$\mathscr{P} = Z(\mathscr{P})'
                        \subseteq \varrho(\mathscr{A})''
                        = \varrho(\mathscr{A}) \subseteq \mathscr{P}$
            by the double commutant theorem.
    To this end, let~$t \in \varrho(\mathscr{A})'$.
    Without loss of generality, we may assume~$0 \leq t \leq 1$.
        Then~$\varphi_t \in [0,1]_\NCP$
            and so~$\varphi_t = \lambda \varphi$ for some~$\lambda \in [0,1]$.
        Hence~$\varphi_t = \lambda \varphi = \varphi_{\lambda 1}$
            and so~$t = \lambda 1 \in Z(\mathscr{P})$,
        which completes the proof.
\end{proof}

\begin{thm}
NMIU-maps and pure NCP-maps are NCP-extreme.
\end{thm}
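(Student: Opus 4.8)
The plan is to reduce everything to the criterion of Theorem~\ref{thm:correspondence}(2): an NCP-map $\varphi$ with Paschke dilation $\mathscr{A}\xrightarrow{\varrho}\mathscr{P}\xrightarrow{f}\mathscr{B}$ is NCP-extreme precisely when $t\mapsto\varphi_t(1)$ is injective on $[0,1]_{\varrho(\mathscr{A})'}$. Since $\varrho$ is unital, $\varphi_t(1)=f(\sqrt{t}\,\varrho(1)\,\sqrt{t})=f(t)$, so in both cases it suffices to show that $t\mapsto f(t)$ is injective on $[0,1]_{\varrho(\mathscr{A})'}$ for a conveniently chosen Paschke dilation. Any choice is allowed, since NCP-extremeness of $\varphi$ does not depend on the dilation by Lemma~\ref{lem:dilationsareisomorphic}.

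For an NMIU-map $\varphi\colon\mathscr{A}\to\mathscr{B}$ I would use the Paschke dilation $\mathscr{A}\xrightarrow{\varphi}\mathscr{B}\xrightarrow{\id}\mathscr{B}$ from the example. Here $f=\id$, so $t\mapsto f(t)=t$ is simply the inclusion $[0,1]_{\varphi(\mathscr{A})'}\hookrightarrow\mathscr{B}$, which is trivially injective. Hence every NMIU-map is NCP-extreme.

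The pure case is the substantial one. Assume $\varphi$ is pure. By Corollary~\ref{cor:purequiv} the map $\varrho$ is surjective, so $\varrho(\mathscr{A})'=Z(\mathscr{P})$ and we must show $t\mapsto f(t)$ is injective on the central effects $[0,1]_{Z(\mathscr{P})}$. By Corollary~\ref{cor:fispure} the map $f$ is pure; after replacing $f$ by $\tfrac{1}{\|f\|}f$ (which changes neither purity nor the injectivity in question) we may assume $f$ is contractive and factor it as $f=c_{f(1)}\after f_\ft\after h_{\car f}$ as in Proposition~\ref{prop:ww16}(5), with $f_\ft$ an isomorphism (purity) and $c_{f(1)}$ injective (Proposition~\ref{prop:ww16}(2)). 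For central $t_1,t_2\in[0,1]_{Z(\mathscr{P})}$ with $f(t_1)=f(t_2)$, injectivity of $c_{f(1)}$ and of the isomorphism $f_\ft$ yields $\car f\,t_1\,\car f=\car f\,t_2\,\car f$; since the $t_i$ are central this reads $(t_1-t_2)\,\car f=0$.

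It remains to pass from $(t_1-t_2)\,\car f=0$ to $t_1=t_2$, and this is the crux. Writing $z=t_1-t_2$, a central self-adjoint element with $z\,\car f=0$, a short central-support computation (using that $1-C_{\car f}$ is the largest central projection annihilating $\car f$) gives $z\,C_{\car f}=0$. So it suffices to know $C_{\car f}=1$. For this I would apply the preceding theorem characterising injectivity of~$\varrho$ to the map $f$ itself: by Example~\ref{exa:pure} a Paschke dilation of $f$ is $\mathscr{P}\xrightarrow{\id}\mathscr{P}\xrightarrow{f}\mathscr{B}$, whose left leg $\id$ is injective, whence $C_{\car f}=1$. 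Then $z=z\,C_{\car f}=0$, so $t_1=t_2$, the map $t\mapsto f(t)$ is injective, and $\varphi$ is NCP-extreme. The main obstacle is exactly this last central-carrier step, and in particular seeing that $C_{\car f}=1$; everything else is a direct translation through the correspondence of Theorem~\ref{thm:correspondence}.
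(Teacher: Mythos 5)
Your proof is correct, and while it shares the paper's overall skeleton --- both cases are reduced via Theorem~\ref{thm:correspondence}(2) to injectivity of $t\mapsto f(t)$ on central effects, and your NMIU case is identical to the paper's --- your execution of the pure case is genuinely different. The paper does not stay with an abstract dilation: it scales $\varphi$ to be contractive, writes $\varphi = c\after h_p$ with $p = \car\varphi$ and $c$ a compression, and invokes Proposition~\ref{prop:paschkecompression} together with Theorem~\ref{thm:paschkecorner} to obtain the \emph{concrete} Paschke dilation $\mathscr{A}\to C_p\mathscr{A}\to\mathscr{B}$ with left leg $h_{C_p}$ and right leg $c\after h_p$; it then checks by hand that $t\mapsto c(ptp)$ is injective on $[0,1]_{Z(C_p\mathscr{A})}$, where the crucial central-carrier fact is automatic because the unit of $C_p\mathscr{A}$ \emph{is} the central carrier of $p$ (from $ptp=0$ one gets $p\leq 1-r(t)$ with $1-r(t)$ central, forcing $r(t)=0$). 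You instead work with an arbitrary dilation, import Corollary~\ref{cor:purequiv} (surjectivity of $\varrho$) and Corollary~\ref{cor:fispure} (purity of $f$), factor $f = c_{f(1)}\after f_\ft\after h_{\car f}$ by Proposition~\ref{prop:ww16}(5), and reduce everything to $C_{\car f}=1$, which you obtain by the slick move of applying the (unlabelled) injectivity theorem to the dilation $\mathscr{P}\xrightarrow{\id}\mathscr{P}\xrightarrow{f}\mathscr{B}$ of $f$ from Example~\ref{exa:pure}. This isolates a reusable observation the paper never states explicitly: the right leg of \emph{any} Paschke dilation annihilates no nonzero central projection. Your central-support step is sound ($z\,\car f=0$ with $z$ central self-adjoint gives $r(z)\,\car f=0$ with $r(z)$ central, whence $r(z)\leq 1-C_{\car f}=0$), as is the scaling step, since purity of $f$ is by definition scaling-invariant and injectivity of $t\mapsto f(t)$ is unaffected (the degenerate case $f=0$, where $\nicefrac{1}{\|f\|}$ is undefined, forces $\varphi=0$, which is trivially NCP-extreme; the paper's own scaling step has the same implicit gap). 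One caveat on what the routes buy: your argument reads more cleanly at the level of cited statements and avoids redoing the concrete computation, but it is not logically lighter, since the injectivity theorem's proof itself leans on point 2 of the proof of Theorem~\ref{thm:paschkecorner}; the paper's route, by contrast, exhibits the dilation explicitly, which is informative in its own right.
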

\begin{proof}
Let~$\varrho \colon \mathscr{A} \to \mathscr{B}$
be any NMIU-map.
$\xymatrix{\mathscr{A}
\ar[r]|-{\varrho} & \mathscr{B}
\ar[r]|-{\id} & \mathscr{B}}$ is a Paschke dilation of~$\varrho$.
If~$t \in \varrho(\mathscr{A})'$,
then~$\varrho_t(1) = t$
and so, clearly, $t \mapsto \varrho_t(1)$
is injective on~$\varrho(\mathscr{A})'$.
Hence by Theorem~\ref{thm:correspondence},
we see~$\varrho$ is NCP-extreme.

Assume~$\varphi\colon \mathscr{A} \to \mathscr{B}$ is pure.
By scaling, we may assume~$\varphi$ is contractive.
Write~$p \equiv \car \varphi$.
Then~$\varphi = c \after h_p$ for some
compression~$p \mathscr{A} p \to \mathscr{B}$.
By Proposition~\ref{prop:paschkecompression}
and Theorem~\ref{thm:paschkecorner}
we know
$\xymatrix@C+.4pc{\mathscr{A}
\ar[r]|-{h_{C_p}} & C_p\mathscr{A}
\ar[r]|-{c \after h_p} & \mathscr{B}}$ is a Paschke dilation of~$\varphi$.
We have to show~$t \mapsto \varphi_t(1)$ is injective
on~$[0,1]_{h_{C_p}(\mathscr{A})'}$.
As~$h_{C_p}$ is surjective,
 we have~$h_{C_p}(\mathscr{A})' = Z(C_p \mathscr{A})$
hence~$\varphi_t(1) = c(h_p(\sqrt{t} h_{C_p}(1)\sqrt{t}))
                    = c(pt p )$ for~$t \in Z(C_p \mathscr{A})$.
As compressions are injective,
it is sufficient to show~$h_p$
is injective on~$Z(C_p \mathscr{A})$.
Assume~$t \in Z(C_p \mathscr{A})$
such that~$ptp=0$.
Then~$0=r(pt)=pr(t)$ and so~$p \leq 1-r(t)$.
As~$1-r(t)$ is a central projection,
we must have~$1-r(t) \leq C_p = 1_{C_p \mathscr{A}}$.
Hence~$r(t)=0$ and so~$t=0$.  We are done.
\end{proof}

\begin{prob}
Is there an NCP-extreme NCP-map into a factor
which is neither pure
nor a compression after an NMIU-map?
\end{prob}

\subsection*{Remarks}
The construction of the standard Paschke dilation
is a generalization of the GNS-construction
to Hilbert C$^*$-modules.
For this reason, for those studying C$^*$-modules,
the construction is known as Paschke's GNS
(e.g.~\cite[Remark~8.4]{skeide1}).
There is also a generalization of Stinespring
to~C$^*$-modules due to Kasparov.\cite{kasparov}
This Theorem, however, is like Stinespring only applicable
to NCP-maps of which the codomain has a certain form
and hence as far as it applies to arbitrary NCP-maps,
it reduces to Paschke.

\subsubsection*{Acknowledgments}
We thank Robin Adams, Aleks Kissinger, Hans Maassen,
Mathys Rennela, Michael Skeide, and Sean Tull
for their helpful suggestions.
We especially thank
Chris Heunen for receiving the first author on a research visit,
for suggesting Proposition~\ref{prop:stinespring-spatial}
(which was the starting point of this paper),
for Example~\ref{exa:pure},
and for many other contributions which will appear
in a future publication.
We have received funding from the
European Research Council under grant agreement \textnumero~320571.

\bibliography{main}{}
\bibliographystyle{eptcs}

\end{document}